\DeclareSymbolFont{cyrletters}{OT2}{wncyr}{m}{n}
\DeclareMathSymbol{\Sha}{\mathalpha}{cyrletters}{"58}
\newcommand{\dashedarrow}{\dashrightarrow}
\newcommand{\Aff}{\mathbb{A}}
\newcommand{\C}{\mathbb{C}}
\newcommand{\F}{\mathbb{F}}
\newcommand{\Gmk}{\mathbb{G}_{\mathrm{m},k}}
\newcommand{\PP}{\mathbb{P}}
\newcommand{\Q}{\mathbb{Q}}
\newcommand{\Z}{\mathbb{Z}}
\newcommand{\uom}{\underline{\omega}}
\newcommand{\calC}{\mathcal{C}}
\newcommand{\calF}{\mathcal{F}}
\newcommand{\calJ}{\mathcal{J}}
\newcommand{\calO}{\mathcal{O}}
\DeclareMathOperator{\id}{id}
\DeclareMathOperator{\Log}{Log}
\DeclareMathOperator{\rank}{rank}
\newcommand{\smooth}{{\operatorname{smooth}}}
\newcommand{\unr}{{\operatorname{unr}}}
\newcommand{\isom}{\simeq}
\newcommand{\rholog}{\rho \log}
\newcommand{\To}{\longrightarrow}
\newcommand{\pws}[1]{[\![#1]\!]}
\numberwithin{equation}{section}
\newtheorem{theorem}{Theorem}[section]
\newtheorem{lemma}[theorem]{Lemma}
\newtheorem{corollary}[theorem]{Corollary}
\newtheorem{proposition}[theorem]{Proposition}
\theoremstyle{definition}
\newtheorem{definition}[theorem]{Definition}
\newtheorem{question}[theorem]{Question}
\newtheorem{conjecture}[theorem]{Conjecture}
\theoremstyle{remark}
\newtheorem{remark}[theorem]{Remark}
\definecolor{darkgreen}{rgb}{0,0.5,0}
\begin{document}

\title[Uniform bounds for rational points]%
      {Uniform bounds for the number of rational points\\
       on hyperelliptic curves of small Mordell-Weil rank}

\author{Michael Stoll}
\address{Mathematisches Institut,
         Universit\"at Bayreuth,
         95440 Bayreuth, Germany.}
\email{Michael.Stoll@uni-bayreuth.de}
\urladdr{http://www.computeralgebra.uni-bayreuth.de}

\date{November 25, 2015}

\begin{abstract}
  We show that there is a bound depending only on~$g$, $r$ and~$[K:\Q]$
  for the number of $K$-rational points on a hyperelliptic curve~$C$ of genus~$g$
  over a number field~$K$ such that the Mordell-Weil rank~$r$
  of its Jacobian is at most~$g-3$. If $K = \Q$, an explicit bound is
  $8rg + 33(g - 1) + 1$.

  The proof is based on Chabauty's method; the new ingredient is an estimate
  for the number of zeros of an abelian logarithm on a $p$-adic `annulus'
  on the curve, which generalizes the standard bound on disks.
  The key observation is that for a $p$-adic field~$k$, the set of
  $k$-points on~$C$ can be covered by a collection of disks and annuli
  whose number is bounded in terms of~$g$ (and~$k$).

  We also show, strengthening a recent result by Poonen and the author,
  that the lower density of hyperelliptic curves of odd degree over~$\Q$
  whose only rational point is the point at infinity tends to~$1$ uniformly
  over families defined by congruence conditions, as the genus~$g$ tends to infinity.
\end{abstract}

\maketitle


\section{Introduction}

Since Faltings' proof~\cite{Faltings1983} of Mordell's conjecture, we know
that a curve of \hbox{genus~$g \ge 2$} over a number field~$K$
can have only finitely many $K$-rational points.
This raises the question whether there might be uniform bounds of some
sort on the number of $K$-rational points.
Caporaso, Harris and Mazur~\cite{CaporasoHarrisMazur1997} have
shown that the validity of the weak Lang
conjecture on rational points on varieties of general type would imply
the existence of a bound depending only on the genus~$g$ and the field~$K$.
Pacelli~\cite{Pacelli1997} has, under the same assumption, shown the existence
of a bound depending only on~$g$ and the degree of~$K$.
(For function fields like $k = \F_p(t)$, the number of $k$-points on curves
over~$k$ of fixed genus is unbounded, however, see for
example~\cite{ConceicaoUlmerVoloch}.)
On the other hand,
considering an embedding of the curve into its Jacobian variety, which
identifies the set of $K$-rational points on the curve with the intersection
of the curve and the Mordell-Weil group, one can formulate the following purely
geometric statement (Mazur~\cite{Mazur1986}*{end of Section~III.2}
asks it as a question):

\begin{conjecture}[Uniform Mordell-Lang for curves] \label{Conj:uML}
  Given $g \ge 2$ and $r \ge 0$, there is a constant $N(g, r)$ such that
  for any curve $C$ over~$\C$ of genus~$g$ with an embedding $i \colon C \to J$
  into its Jacobian and for any subgroup $\Gamma \subset J(\C)$ of rank~$r$, one has
  $\#i^{-1}(\Gamma) \le N(g, r)$.
\end{conjecture}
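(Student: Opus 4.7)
The plan is to deploy Vojta's method on $C \times C$ inside $J \times J$, following the strategy that R\'emond used to prove uniform bounds on $\#(X(\bar k) \cap \Gamma)$ for subvarieties $X$ of abelian varieties.  As preparation, reduce to the case of a finitely generated $\Gamma$ defined over a number field.  The set $\iota(C(\C)) \cap \Gamma$ is already finite by Mordell-Lang, so it generates a subgroup of rank at most $r$ which we may substitute for $\Gamma$.  Then $C$, $\iota$, and generators of $\Gamma$ live over some finitely generated subfield $k_0 \subset \C$; spreading out to an integral model of finite type over $\Z$ and specializing at a suitable closed point produces a triple $(C_s, \iota_s, \Gamma_s)$ over a number field $K$ with $\rank \Gamma_s \le r$ and an injection $\iota^{-1}(\Gamma) \hookrightarrow \iota_s^{-1}(\Gamma_s)$.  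Since the sought bound $N(g,r)$ depends only on $g$ and $r$, the choice of $K$ is immaterial.

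The core step is a Vojta-type inequality with purely geometric constants.  Choose a symmetric ample line bundle on $J$ proportional to twice a theta divisor and equip $J(\bar K) \otimes \mathbb{R}$ with the associated N\'eron-Tate pairing.  I would aim to show that there exist $\theta_0 = \theta_0(g) > 0$ and a height threshold $c_1 = c_1(g)$ such that any two distinct $P, Q \in \iota_s^{-1}(\Gamma_s)$ with $c_1 \le \hat h(P) \le \hat h(Q) \le 2\hat h(P)$ satisfy $\angle\bigl(\iota_s(P), \iota_s(Q)\bigr) \ge \theta_0$ in $\Gamma_s \otimes \mathbb{R}$.  A sphere-covering argument in $\mathbb{R}^r$ then bounds the number of points in each dyadic height shell by $c_2(g)^r$, and a Mumford-style gap principle controls both the number of non-empty shells and the points of small N\'eron-Tate height, yielding a cumulative bound depending only on $g$ and $r$.

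The main obstacle is producing the constants $\theta_0(g)$ and $c_1(g)$ without any dependence on the particular curve, its field of definition, or the chosen integral model.  This requires an Arakelov-theoretic treatment of heights on the self-product $C \times C$ that extracts invariants intrinsic to the universal curve of genus $g$ --- precisely the delicate analysis at the core of R\'emond's \emph{In\'egalit\'e de Vojta en dimension sup\'erieure}.  Unlike the Chabauty approach of this paper, Vojta's method does not require $r < g$, but it trades the explicit, linear-in-$g$ estimates available in the low-rank regime for constants that appear to be far harder to make effective.
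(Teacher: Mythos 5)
The statement you are trying to prove is stated in the paper as a \emph{conjecture}: the paper gives no unconditional proof, only a derivation (Section~\ref{S:ZBuML}) of Conjecture~\ref{Conj:uML} from Pink's Conjecture~\ref{ConjPink}, via a fiber-power argument on $\calC_B^n \to \calJ_B^{n-1}$ in the style of Caporaso--Harris--Mazur. Your route through Vojta's method is therefore entirely different from anything in the paper, and it is in fact the strategy that ultimately succeeded (Dimitrov--Gao--Habegger, K\"uhne) --- but several years after this paper, and only after a genuinely new input that your sketch does not supply.

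The gap is exactly where you locate the ``main obstacle,'' and it is not closed by citing R\'emond. In R\'emond's Vojta and Mumford inequalities the angular constant and, more importantly, the height threshold $c_1$ depend on the N\'eron--Tate (or Faltings/theta) \emph{height} of the curve and its Jacobian, not only on $g$; as $C$ varies in moduli this height is unbounded, so the resulting count is not a function of $g$ and $r$ alone. Worse, even granting a threshold $c_1(g)$, the dyadic-shell and cone-packing argument only controls points with $\hat h \ge c_1$; to bound the points of small height you need a uniform lower bound for $\hat h(\iota(P))$ on non-torsion points of $C$, i.e.\ a uniform Bogomolov-type statement, plus a uniform bound on torsion points of $C$ in $J$ --- neither of which follows from the classical Vojta--Mumford machinery. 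These two uniformities (the ``new gap principle'' and relative/uniform Bogomolov) are precisely the content of the later work that resolved the conjecture, so as written your core step asserts the main open problem rather than proving it. The specialization reduction to a number field is fine (replace $\Gamma$ by the finite-rank subgroup generated by $\iota(C(\C)) \cap \Gamma$ and use injectivity of N\'eron specialization at a suitable closed point), but it does not help with the height-uniformity issue. By contrast, the paper's conditional argument sidesteps heights entirely: it trades effectivity for a noetherian induction over proper closed subvarieties of the moduli space, which is why it yields no explicit $N(g,r)$.
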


That this number is finite for each individual curve and subgroup
follows from further work by Faltings~\cite{Faltings1994}.
Heuristic arguments suggest that such a uniform bound should exist.
The existence of such bounds has been shown for $k$ a \emph{function
field} of characteristic~zero if $C$ is not defined over the algebraic numbers
by Buium~\cite{Buium1993} (and also for function fields in characteristic~$p$ by
Buium and Voloch~\cite{BuiumVoloch}). In Section~\ref{S:ZBuML} below,
we will show that Conjecture~\ref{Conj:uML} is implied by (a special case of)
the Zilber-Pink conjecture; this implication can be seen as making precise the
`heuristic arguments' alluded to above.

A weaker variant of Conjecture~\ref{Conj:uML}, turning the geometric statement
into an arithmetic one, is the following.

\begin{conjecture} \label{Conj:dgr}
  Given $d \ge 1$, $g \ge 2$ and $r \ge 0$, there is a constant $R(d,g,r)$
  such that for any number field $K$ of degree~$d$ and any curve $C$ over~$K$
  of genus~$g$ with Jacobian~$J$ such that $\rank J(K) = r$, we have
  $\#C(K) \le R(d,g,r)$.
\end{conjecture}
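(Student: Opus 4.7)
The plan is to attack the conjecture via Chabauty's method, the only known technique that controls $\#C(K)$ purely in terms of the Mordell-Weil rank. Fix a non-archimedean place~$v$ of~$K$ with residue characteristic~$p$; the closure $\overline{J(K)}$ inside the $p$-adic Lie group $J(K_v)$ has dimension at most~$r$, so its annihilator in $H^0(J_{K_v}, \Omega^1)$ has dimension at least $g-r$. Fixing an embedding $\iota \colon C \hookrightarrow J$ and a base point $P_0 \in C(K)$, each such vanishing differential~$\omega$ yields a Coleman integral $\eta(P) = \int_{\iota(P_0)}^{\iota(P)} \omega$ that is locally $p$-adic analytic on~$C(K_v)$ and vanishes on $C(K)$; bounding $\#C(K)$ thus reduces to bounding the zeros of~$\eta$ on $C(K_v)$.

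To make the bound depend only on $d$, $g$, and $r$, one must cover $C(K_v)$ by $p$-adic neighborhoods whose number is uniformly controlled. The naive covering by residue disks of a smooth model has roughly $\#C(\F_v)$ pieces, which is not uniformly bounded in terms of~$g$. I would instead pass to a semistable model and cover $C(K_v)$ by disks attached to smooth points of the components together with annuli around the nodes; the number of annuli equals the first Betti number of the dual graph and is thus at most~$g$, and the combinatorics of the components is similarly controlled. On each disk, the familiar Newton-polygon estimate bounds the zeros of~$\eta$; on an annulus, $\eta$ expands as a $p$-adic Laurent series with a two-sided Newton polygon, and one needs a corresponding zero-counting estimate. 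Developing this annulus bound and combining it with the disk bound should yield a uniform estimate whenever $r<g$, perhaps only after leaving a few units of slack (e.g.~$r \le g-3$) to merge the two estimates cleanly.

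The main obstacle is the range $r \ge g$, where $\overline{J(K)}$ fills all of~$J(K_v)$ and no vanishing differential exists. A natural workaround is unramified descent: replace $C$ by a cover $\pi \colon C' \to C$ arising from $n$-descent on~$J$, whose rational points parametrize those of~$C$ up to twist. The Jacobian of~$C'$ has larger dimension, but its Mordell-Weil rank may grow less, so that after sufficient descent one hopes to land back in the Chabauty regime $r'<g'$. Making this step uniform — bounding the genus of~$C'$, the number of relevant twists, and the resulting ranks purely in terms of $d$, $g$, and $r$ — is where the genuine difficulty sits, and is why I expect only partial cases of the conjecture (such as the hyperelliptic low-rank case flagged in the abstract) to be accessible by the methods outlined here.
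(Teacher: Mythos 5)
The statement you are asked about is a \emph{conjecture}: the paper does not prove it, and neither do you. What the paper actually establishes is the special case of hyperelliptic curves with $r \le g-3$ (Theorem~\ref{T:main-ratpoints-general}, via Theorem~\ref{T:local-main}), and your outline correctly identifies that only such partial cases are within reach. Your sketch of the accessible case does follow the paper's architecture — Chabauty at a fixed place, a covering of $C(K_v)$ by boundedly many disks and annuli, Newton-polygon zero counts on each piece — but it contains one genuine mathematical gap and one miscount. The gap: you assert that on an annulus the Coleman integral $\eta$ ``expands as a $p$-adic Laurent series with a two-sided Newton polygon.'' It does not. The paper's crucial new result (Proposition~\ref{P:integration-A}) is precisely that the abelian integral $\oint$ pulled back to an annulus differs from the naive antiderivative of the Laurent expansion by $c(\omega)\Log_0(\xi) + a(\omega)v(\xi)$, where the term $a(\omega)v(\xi)$, linear in the valuation, can be nonzero even when the residue $c(\omega)$ vanishes and cannot be absorbed into a choice of branch of the logarithm. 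Without killing both $c(\omega)$ and $a(\omega)$ — two linear conditions per annulus — there is no two-sided Newton polygon argument, and indeed equations like $z^{-1} + a\,v(z) + z = 0$ can have unboundedly many solutions. This is the real source of the condition $r \le g-3$ (codimension $r$ plus two extra conditions must still leave a nonzero differential), not a matter of ``merging the two estimates cleanly'' as you suggest.

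The miscount: the number of annuli is the number of edges (nodes) of the reduction graph, not its first Betti number; the Betti number $t \le g$ is the toric rank, while the edge count can be as large as $3g-3$ for a stable model, and the paper's bound for chains of the minimal regular model is $2g-3+t$ (Theorem~\ref{T:specialfiber}). This does not break the strategy — the count is still $O(g)$ — but the identification with the Betti number is wrong. Two further remarks: the paper deliberately works with the minimal regular model over $k$ itself rather than a semistable model, proving the combinatorial bounds of Section~\ref{S:AG} directly, because passing to semistable reduction forces a field extension of degree bounded only in terms of $g$ and ruins the explicit constants (this is noted as variation (i) after Theorem~\ref{T:local-main}, and is the route taken by Katz--Rabinoff--Zureick-Brown). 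And your proposed escape from the regime $r \ge g$ via descent to covers is speculation that the paper does not attempt; as written it offers no uniform control on the genus, the number of twists, or the ranks of the covering Jacobians, so it does not advance the conjecture beyond the Chabauty range.
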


This is formulated as a question again by Mazur in~\cite{Mazur2000}*{page~223} (allowing
the constant to depend on~$K$, not just on the degree~$d$).

However, to our knowledge, so far
not even a uniform (and unconditional) bound for the number of
\emph{rational torsion points} on curves of some fixed genus~$g \ge 2$
has been obtained! In this note, we finally obtain such a bound for
\emph{hyperelliptic} curves of genus at least~$3$.
More generally, we can show that on a hyperelliptic curve~$C$ of genus~$g$
over a number field of degree~$\le d$, there can be at most
$R(d,g,r)$ rational points mapping
into a given subgroup of rank $r \le g-3$ of the Mordell-Weil group,
where $R(d,g,r)$ depends only on~$d$, $g$ and~$r$. This
implies uniform bounds in terms of~$d$, $g$ and~$r$
for the number of rational points on such curves
as long as the Mordell-Weil rank is at most~$g-3$,
and also for the number of rational points in a torsion packet when $g \ge 3$,
see Theorem~\ref{T:main-ratpoints-general} and Corollary~\ref{C:torsion}
below. In particular, this proves Conjecture~\ref{Conj:dgr} for hyperelliptic
curves when $r \le g-3$.

The proof is based on Chabauty's
method~\cites{Chabauty1941,Coleman1985chabauty,McCallum-Poonen2013,Stoll2006-chabauty},
whose `classical' version we now sketch.
If $C$ is a curve over~$\Q$, with Jacobian~$J$ and minimal regular model
$\calC$ over~$\Z_p$,
where the prime $p$ is sufficiently large and we assume
that $r = \rank J(\Q) < g$, then one can bound $\#C(\Q)$ by the number of
smooth $\F_p$-points on the special fiber of~$\calC$ plus~$2r$,
see~\cite{Katz-Zureick-Brown}. This bound is obtained as follows.
Consider the Chabauty-Coleman pairing (defined below in Section~\ref{S:notation})
\[ \Omega^1_J(\Q_p) \times J(\Q_p) \To \Q_p, \quad
   (\omega, P) \longmapsto \oint_O^P \omega
\]
This pairing is $\Q_p$-linear
in~$\omega$ and additive in~$P$; its kernel on the left is trivial. If $r < g$, then
there is a linear subspace $V \subset \Omega^1_J(\Q_p)$ of dimension at
least~$g-r \ge 1$ that annihilates the Mordell-Weil group $J(\Q) \subset J(\Q_p)$
under the pairing. Let $P_0 \in C(\Q)$ and use $P_0$ as basepoint
for an embedding $i \colon C \to J$. Then for all $P \in C(\Q)$ and
all $\omega \in V$, we have
\[ 0 = \oint_O^{i(P)} \omega = \oint_{P_0}^P i^* \omega \]
where $i^* \omega \in \Omega^1_C(\Q_p)$ is a regular differential on~$C$.
The integral on the right is defined by this equality.
One then shows (see for example~\cite{Stoll2006-chabauty}) that the number
of zeros of the function
\[ P \longmapsto \oint_{P_0}^P i^* \omega \]
on a $p$-adic residue disk of~$C$, which is the set of $p$-adic points reducing
mod~$p$ to a given smooth point on the special fiber of~$\calC$, is at most
one plus the number of zeros (counted with multiplicity) of~$\omega$ on
that residue disk. (Here we use that $p$ is large enough, otherwise the
bound has to be modified.) Choosing a `good' $\omega \in V$ for each residue disk
leads to the bound
\[ \#C(\Q) \le \#\calC(\F_p)^\smooth + 2r \]
mentioned earlier.

The problem with this approach is that the bound depends on the complexity
of the special fiber of~$\calC$, which is unbounded --- there can be
arbitrarily long chains of rational curves in the special fiber, which
can lead to an arbitrarily large number of smooth $\F_p$-points.
The idea for overcoming this problem is to parameterize the subset of~$C(\Q_p)$
corresponding to such a chain not by a union of (an unbounded number of) disks,
but by an `annulus'.
Such an annulus arises as the set of $p$-adic points on~$C$ reducing to
an ordinary double point on the special fiber of a suitable (not necessarily
regular) model of the curve, which is obtained by contracting the chain.
We can then obtain a bound for the number of
points in that subset that is independent of the number of residue disks
it contains.
Since both the number of such annuli and the number of remaining residue
disks are bounded in terms of the genus (and~$p$),
see Theorem~\ref{T:specialfiber}, we do obtain a uniform bound.
The price we have to pay is that on (at least some of) the annuli, we
need to impose additional linear conditions on the differential~$\omega$,
so that we need the space of differentials annihilating the relevant
subgroup of~$J(\Q_p)$ to be of dimension at least three. This translates
into the rank bound $r \le g-3$. The key result for our application
is Proposition~\ref{P:integration-A}, which gives a precise comparison
of the abelian integral pulled back to an annulus and the $p$-adic
integral of the pulled-back $1$-form. It turns out that the difference
between the two is a linear function of the valuation.

We carry out this approach in the case of hyperelliptic curves.
Our method does in fact generalize to arbitrary curves as demonstrated
by recent work of Katz, Rabinoff and Zureick-Brown~\cite{KRZB}.

For the convenience of the reader, we give an overview of the proof
of the main result, which we state here in simplified form.

\begin{theorem}[Theorem~\ref{T:local-main}] \label{T:main}
  Let $k$ be a $p$-adic field with $p$ odd and write $e$ for the ramification
  index of~$k$ and $q$ for the size of its residue field. Let $g \ge 3$ and $0 \le r \le g-3$.
  We assume that $p > e+1$.

  Let $C \colon y^2 = f(x)$ be a hyperelliptic curve of genus~$g$ over~$k$.
  We denote by~$J$ the Jacobian variety of~$C$. Let $\Gamma \subset J(k)$ be
  a subgroup of rank~$r$.
  Let $i \colon C \to J$ be an embedding given by choosing some basepoint~$P_0 \in C(k)$.
  Then
  \[ \#\{P \in C(k) : i(P) \in \Gamma\} = O\bigl((e(r+1) + q)g\bigr) \,. \]
\end{theorem}

Applying (a precise version of) this result for $k = \Q_3$ to a curve
over~$\Q$ and to $\Gamma = J(\Q)$ leads
to the following bound for the number of rational points.

\begin{theorem}[Theorem~\ref{T:main-ratpoints-general} for $d=1$]
  Let $g \ge 3$ and $0 \le r \le g-3$. Let $C$ be a hyperelliptic curve of genus~$g$
  over~$\Q$ such that the Mordell-Weil rank of its Jacobian is~$r$. Then
  \[ \#C(\Q) \le 33(g - 1) + 1 \quad \text{if $r = 0$} \qquad \text{and} \qquad
     \#C(\Q) \le 8rg + 33(g - 1) - 1 \quad \text{if $r \ge 1$.}
  \]
\end{theorem}

The proof of Theorem~\ref{T:main} proceeds in the following steps.
\begin{enumerate}[1.]\addtolength{\itemsep}{1mm}
  \item We show that $C(k)$ can be partitioned into $O(qg)$~disks and $O(g)$~annuli
        (Proposition~\ref{P:DAbounds}).
  \item On the union of the disks, $\#i^{-1}(\Gamma)$ can be bounded by~$O(qg + er)$
        by the usual Chabauty method (Lemma~\ref{L:points-D}).
  \item We give a bound of the form~$O(e(r+1))$ for $\#i^{-1}(\Gamma)$ on an annulus
        (Proposition~\ref{P:good-A}). This is where we need the stronger condition
        $r \le g-3$ compared to the usual Chabauty condition $r \le g-1$.
        As already mentioned,
        the reason behind this is that we want to use a different integral
        that satisfies the Fundamental Theorem of Calculus on annuli to get the bound.
        We therefore have to compare this integral with the abelian integral
        used in the Chabauty-Coleman pairing. The result is that both agree
        when the differential satisfies two extra linear conditions
        (Proposition~\ref{P:integration-A}).
  \item Finally, we add the bounds for the disks and the annuli.
\end{enumerate}

The paper is organized as follows. In Section~\ref{S:ZBuML}, we show
that a version of the Zilber-Pink Conjecture implies Conjecture~\ref{Conj:uML}.
This is not essential for the main results of the paper, but gives some idea
regarding the kind of bound in terms of $g$ and~$r$ one might expect to hold.
After a short section introducing notation, we proceed in Section~\ref{S:AG}
with a discussion of the combinatorics of reduction graphs. If one is only
interested in the existence of some bound (as long as $r \le g-3$), then
it suffices to use
the result of Artin and Winters~\cite{ArtinWinters} that gives the existence
of bounds in terms of~$g$ for the number of chains and `$\Aff^1$-components'.
The precise results given by Theorem~\ref{T:specialfiber} are only needed to
obtain the concrete bounds in the statements of Theorems \ref{T:local-main}
and~\ref{T:main-ratpoints-general}. Section~\ref{S:part} uses the main
result of Section~\ref{S:AG} to give bounds in terms of~$g$ for the number
of disks and annuli needed to cover the set of $p$-adic points on~$C$.
Section~\ref{S:hyp} gives an explicit description of the
annuli on a hyperelliptic curve when the residue characteristic is odd.
Section~\ref{S:integ} compares the abelian integral on an annulus with the
integral satisfying the Fundamental Theorem of Calculus and deduces
a bound on
the number of common zeros on an annulus of abelian integrals coming from
differentials killing a given subgroup of~$J$ under the Chabauty-Coleman pairing.
The next two Sections \ref{S:mainT} and~\ref{S:ratpts} then combine the
results of Sections \ref{S:part} and~\ref{S:integ} to state and prove our main result,
Theorem~\ref{T:local-main}, and its application to bounds for rational points,
Theorem~\ref{T:main-ratpoints-general}, and for rational torsion packets,
Corollary~\ref{C:torsion}. The last section, Section~\ref{S:rholog},
uses the generalization of our results on differentials on annuli
due to Katz, Rabinoff and Zureick-Brown
to deduce a version of the main result of~\cite{PoonenStoll2014} that
applies uniformly to families of odd degree hyperelliptic curves that are
defined by congruence conditions.

\subsection*{Acknowledgments}

The vague idea that one should be able to use Chabauty's method to prove
uniform upper bounds for the number of rational points had long been in
the author's mind, but was put aside as infeasible because of the
apparent problems described above. The new activity leading to the
results presented here was prompted by a question Manjul Bhargava
asked related to~\cite{PoonenStoll2014}: could we give a
family of odd degree hyperelliptic curves~$C$ of arbitrarily high genus, defined by
congruences, such that our method would not work for any curve
in the family? The intuition that this should not be possible for
large genus led to the idea of using integration on annuli to prove
that the size of the image of $C(\Q_2)$ in~$\PP^{g-1}(\F_2)$ under the `$\rholog$'
map of~\cite{PoonenStoll2014} is bounded by a polynomial in~$g$.
This result (with a quadratic bound) is given in Section~\ref{S:rholog} below.
The idea then extended naturally to the original problem.
So I would like to thank Manjul for asking the right question.
I also wish to thank Amnon Besser for help with questions
about $p$-adic integration
and Stefan Wewers for answering my questions on stable models (which have
by now been eliminated from the argument, but see Remark~\ref{R:variants}).
Dino Lorenzini was very helpful on the question (discussed in
Section~\ref{S:AG}) of how to bound
the number of `$\Aff^1$-components' in the special fiber of the
minimal regular model of a curve. Felipe Voloch provided some pointers
to the literature.
The idea for proving that Zilber-Pink implies uniform Mordell-Lang
for curves germinated upon hearing a talk by Umberto Zannier
at the joint \"OMG and DMV meeting in Innsbruck in September~2013 and
took shape while reading his book~\cite{ZannierBook} afterwards.
Padmavathi Srinivasan asked some questions that helped improve the
argument in the proof of Theorem~\ref{T:specialfiber}.
Last, but not least, I would like to thank an anonymous referee
for spotting a mistake and for making some valuable suggestions that
led to improvements in organizing the arguments in Sections \ref{S:AG}
and~\ref{S:hyp}.


\section{Zilber-Pink implies uniform Mordell-Lang for curves} \label{S:ZBuML}

In~\cite{Pink2005preprint}*{Conjecture~6.1}, Pink formulates
a more general version of the following conjecture. It is a special case of
a conjecture on mixed Shimura varieties that belongs to a circle of ideas
usually referred to as the `Zilber-Pink conjecture(s)'.

\begin{conjecture}[Pink] \label{ConjPink}
  Let $\pi \colon A \to B$ be an algebraic family of abelian varieties over~$\C$.
  Consider an irreducible subvariety $X \subset A$ of dimension~$d$
  such that $X$ is not contained in any proper closed subgroup scheme of~$A$.
  Then the set of points $x \in X$ that are contained in a subgroup
  of codimension $> d$ of the fiber~$A_{\pi(x)}$ above $\pi(x) \in B$
  is not Zariski dense in~$X$.
\end{conjecture}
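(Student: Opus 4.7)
The statement is Pink's conjecture, a major open problem in the circle of ``unlikely intersections'' that generalizes Mordell-Lang, Manin-Mumford, and Andr\'e-Oort. No proof is known in this generality, so my plan can only follow the Pila-Zannier strategy, which is the only known unified attack and has yielded essentially all established special cases (Masser-Zannier, Habegger-Pila, and others).

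The plan is as follows. First, trivialize the family analytically: after shrinking to a relatively compact definable subset $B_0 \subset B(\C)$, a choice of period map gives a real-analytic trivialization $A_{B_0} \cong B_0 \times \C^g / \Lambda$ in which every codimension-$c$ subgroup of a fiber is cut out by $c$ rational linear conditions on the covering. The hypothesis that $x$ lies in a subgroup of codimension $> d$ becomes a rank-drop condition on a period matrix, producing a set $Z$ (the preimage of $X$ intersected with the atypical locus) that is definable in the o-minimal structure $\mathbb{R}_{\mathrm{an},\exp}$. The Pila-Wilkie counting theorem then bounds by $O_\eps(T^\eps)$ the number of rational points of height~$\le T$ on~$Z$ that lie off its algebraic part. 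For each candidate $x$ defined over a number field and satisfying the hypothesis, Galois conjugation produces many further points of~$Z$ of comparable height; a lower bound of the shape $\#(\mathrm{Gal} \cdot x) \gg H(x)^\delta$ would then force the entire Galois orbit into the algebraic part of~$Z$. A (mixed) Ax-Schanuel theorem in the family of abelian varieties, available through work of Gao and collaborators, identifies the algebraic part with a union of weakly-special subvarieties, and the assumption that $X$ is not contained in a proper closed subgroup scheme prevents any such component from coinciding with $X$. Thus the atypical locus lies in a countable union of proper subvarieties, and a standard specialization argument promotes this to Zariski non-density.

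The main obstacle --- and the reason the conjecture remains open --- is the required Galois orbit lower bound. This is the outstanding arithmetic input in every Pila-Zannier argument, and no version uniform over an arbitrary family $\pi \colon A \to B$ is currently known. For restricted settings (e.g.\ $\Gm^n$ or a single fixed abelian variety) such bounds are available through the height machinery of Masser, Silverman, and others, but in the generality required here they are genuinely out of reach. The o-minimality step and the functional-transcendence step are, by contrast, essentially in place, so the conjecture can fairly be said to reduce to this single arithmetic problem.
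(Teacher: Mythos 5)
The statement you were asked about is not a theorem of the paper at all: it is Conjecture~\ref{ConjPink}, attributed to Pink and quoted from his preprint as a special case of the Zilber--Pink circle of conjectures. The paper offers no proof of it and does not claim one; it is used purely as a hypothesis, the actual content of Section~\ref{S:ZBuML} being the implication ``Conjecture~\ref{ConjPink} $\Rightarrow$ Conjecture~\ref{Conj:uML}'' (uniform Mordell--Lang for curves), proved via the fiber-power construction of Lemma~\ref{L:start} and Lemma~\ref{L:ind} in the style of Caporaso--Harris--Mazur. So there is no proof in the paper against which your proposal can be compared.

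Your write-up is honest and essentially accurate as a survey: the Pila--Zannier strategy (o-minimal definability of the atypical locus, Pila--Wilkie counting, Ax--Schanuel/functional transcendence to control the algebraic part, and a Galois orbit lower bound as the arithmetic input) is indeed the standard line of attack, and the uniform Galois-orbit lower bound over an arbitrary family is indeed the piece that is missing in this generality. But by your own admission the argument does not close, so it is not a proof, and it should not be presented as one. Two smaller points: the statement is over $\C$, so before invoking any counting of algebraic points one needs a specialization/spreading-out step to reduce to subvarieties defined over $\overline{\Q}$, which you mention only at the very end and in the wrong direction; and ``the atypical locus lies in a countable union of proper subvarieties'' does not by itself yield Zariski non-density, which is exactly the kind of subtlety the conjecture is about. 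The correct conclusion is simply that the statement is open; if your goal was to engage with the paper, the result to prove (or critique) is the implication to Conjecture~\ref{Conj:uML}, not Pink's conjecture itself.
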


The idea behind this is that based on the dimensions, one would not expect
any intersection between $X$ and a subgroup scheme of codimension $> d$,
so intersection points are `unlikely' and should therefore form a `sparse'
subset of~$X$. See Zannier's book~\cite{ZannierBook} for background information
on the subject of `unlikely intersections'.

(Pink's original version is for families of semi-abelian varieties. However,
Bertrand~\cites{Bertrand2011preprint,Bertrand2013} gave a counterexample to this more general
formulation. It turns out that the semi-abelian version needs to be modified
to be compatible with the original conjecture on mixed Shimura varieties.)

In this section we show that Conjecture~\ref{ConjPink} implies Conjecture~\ref{Conj:uML}.
The strategy is similar to that employed by Caporaso, Harris, and
Mazur in~\cite{CaporasoHarrisMazur1997}.
Namely, we show that Pink's conjecture implies that if a curve has many
points whose differences generate a subgroup of bounded rank in the Jacobian,
then the points have algebraic dependencies, similar to what is implied by
`correlation' in the sense of~\cite{CaporasoHarrisMazur1997} under the
weak Lang conjecture.
In more or less the same way as in that paper, the result then follows.

Let $\pi \colon \calC \to B$ be a smooth family of irreducible curves
of genus~$g$ over~$\C$, with $B$ (say, irreducible) of dimension~$d$.
We write $\calJ \to B$ for the induced family of
Jacobians. Fix $r \ge 0$. Given $n > r$, consider the $n$-th fiber power
$\calC_B^n \to B$. We denote by $\phi$ the morphism
\[ \calC_B^n \To \calJ_B^{n-1}, \qquad
   (b; P_0, P_1, \ldots, P_{n-1}) \longmapsto (b; [P_1-P_0], \ldots, [P_{n-1}-P_0]) \,.
\]
We claim that the image of~$\phi$ is not contained in a proper subgroup
scheme of~$\calJ_B^{n-1}$. Consider a point $b \in B$ and fix a basepoint
$P_0 \in \calC_b$. Since the image of~$\calC_b$ in~$\calJ_b$ under the embedding
$P \mapsto [P - P_0]$ spans~$\calJ_b$ as a group, it follows that the image
of $\calC_b^n$ in~$\calJ_b^{n-1}$ spans the latter group. In particular,
this image cannot be contained in a proper algebraic subgroup of~$\calJ_b^{n-1}$.
Since a proper subgroup scheme of~$\calJ_B^{n-1}$ will meet most fibers
in a proper subgroup of the fiber, this shows that $\phi(\calC_B^n)$ cannot
be contained in a proper subgroup scheme of~$\calJ_B^{n-1}$.

If the subgroup of the Jacobian generated by the point differences has
rank at most~$r$, then there are $n-1-r$ independent relations of the form
\[ a_{i1}[P_1-P_0] + a_{i2}[P_2-P_0] + \ldots + a_{i,n-1}[P_{n-1}-P_0] = 0 \]
with integers~$a_{ij}$. For points $(b; Q_1, \ldots, Q_{n-1}) \in \calJ_B^{n-1}$,
the relations
\[ a_{i1} Q_1 + a_{i2} Q_2 + \ldots + a_{i,n-1} Q_{n-1} = 0 \]
then define a subgroup scheme of~$\calJ_B^{n-1}$
containing $\phi(b; P_0, P_1, \ldots, P_{n-1})$
and of codimension $(n-1-r)g$.
The dimension of the image of~$\phi$ is at most $\dim \calC_B^n = d + n$.
So the codimension is greater than this dimension whenever
\begin{equation} \label{E:lb}
  n > \frac{d + g}{g-1} + \frac{g}{g-1} r \,.
\end{equation}
We conclude:

\begin{lemma} \label{L:start}
  Assume Conjecture~\ref{ConjPink}. If $d$, $g$, $r$ and $n$ satisfy~\eqref{E:lb},
  then the set of points in~$\calC_B^n$ such that the differences lie in
  a subgroup of rank $\le r$ is not Zariski dense.
\end{lemma}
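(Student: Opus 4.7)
The plan is to apply Conjecture~\ref{ConjPink} to the Zariski closure $X$ of $\phi(\calC_B^n)$ inside the ambient family of abelian varieties $\calJ_B^{n-1} \to B$. First we verify the hypotheses of the conjecture. Since $\pi$ is a smooth family of irreducible curves over the irreducible base $B$, the fiber power $\calC_B^n$ is an irreducible variety of dimension $d + n$; hence $X$ is an irreducible closed subvariety of $\calJ_B^{n-1}$ with $\dim X \leq d + n$. The observation recalled in the paragraph before the lemma, namely that on each fiber one can choose points whose differences are independent over $\operatorname{End}(\calJ_b)$, shows that $\phi(\calC_B^n)$, and therefore $X$, is not contained in any proper closed subgroup scheme of $\calJ_B^{n-1}$.

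Next, let $S \subset \calC_B^n$ denote the set of points whose coordinate differences generate a subgroup of the relevant Jacobian fiber of rank at most $r$. For any such point, there exist $n - 1 - r$ independent $\Z$-linear relations among the differences, and (after multiplying each coefficient vector by a common positive integer to absorb any torsion) these relations cut out an algebraic subgroup of $\calJ_b^{n-1}$ of codimension exactly $(n-1-r)g$ that contains the image of the point under $\phi$. By hypothesis~\eqref{E:lb} one has $(n-1-r)g > d + n \geq \dim X$, so each point of $\phi(S)$ lies in a subgroup of its fiber of codimension strictly greater than $\dim X$. Conjecture~\ref{ConjPink}, applied to $X$, then yields that $\phi(S)$ is not Zariski dense in $X$.

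It remains to transport this conclusion back to $\calC_B^n$. Pick a proper closed subvariety $Y \subsetneq X$ containing $\phi(S)$; then $\phi^{-1}(Y)$ is a closed subset of $\calC_B^n$ containing $S$. Since $X$ is by construction the Zariski closure of the image of $\phi$, the induced map $\calC_B^n \to X$ is dominant, and so $\phi^{-1}(Y) \subsetneq \calC_B^n$. Consequently $S$ is not Zariski dense in $\calC_B^n$, as required.

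The only real subtlety, rather than a genuine obstacle, is the torsion issue in the middle step: a rank bound on the subgroup generated by the differences only produces relations modulo torsion in $\calJ_b$, but clearing the torsion by multiplying each coefficient vector by one common integer gives honest relations and leaves the codimension of the resulting kernel unchanged. Once that is handled, the proof is essentially formal — the content is packaged into the dimension count~\eqref{E:lb} and the statement of Pink's conjecture.
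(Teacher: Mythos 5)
Your proposal is correct and follows essentially the same route as the paper: apply Pink's conjecture to the (closure of the) image of $\phi\colon \calC_B^n \to \calJ_B^{n-1}$, note that a rank bound of $r$ on the group generated by the differences forces $n-1-r$ independent integral relations cutting out a subgroup of fiberwise codimension $(n-1-r)g$, and observe that~\eqref{E:lb} makes this codimension exceed $\dim X \le d+n$. The extra details you supply (irreducibility of $X$, pulling the non-density back along the dominant map $\phi$, and the torsion remark --- which is in fact unnecessary, since the kernel of $\Z^{n-1}\twoheadrightarrow\Gamma$ already consists of honest relations of rank $\ge n-1-r$) are harmless elaborations of what the paper leaves implicit.
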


Now we mimic the argument given in~\cite{CaporasoHarrisMazur1997}*{Section~1.2}.
We first prove the following lemma.

\begin{lemma} \label{L:ind}
  Assume Conjecture~\ref{ConjPink}. Let $\pi \colon \calC \to B$ be a smooth
  family of irreducible curves of genus~$g \ge 2$ over~$\C$. Fix $r \ge 0$.
  Then there is a bound $N(\pi, r)$ and a proper closed subvariety~$B'$ of~$B$
  such that for all $b \in B(\C) \setminus B'(\C)$, and for any choice of
  strictly more than~$N(\pi, r)$
  distinct points on the curve~$\calC_b$, the differences of these points
  will generate a subgroup of rank strictly greater than~$r$ in the Jacobian~$\calJ_b$.
\end{lemma}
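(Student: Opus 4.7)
The plan is to apply Lemma~\ref{L:start} to obtain a proper Zariski closed subvariety $Y \subsetneq \calC_B^{n_0}$ containing every $n_0$-tuple of points on a fiber whose differences generate a subgroup of rank $\le r$, and then to convert this geometric statement into the desired uniform bound by a Noetherian induction on $\dim B$ combined with a fiber-wise combinatorial argument, much as in~\cite{CaporasoHarrisMazur1997}*{Section~1.2}.

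Concretely, take $d := \dim B$ and let $n_0$ be the smallest integer satisfying~\eqref{E:lb}. Lemma~\ref{L:start} produces a proper closed subvariety $Y \subsetneq \calC_B^{n_0}$ containing the locus of $n_0$-tuples $(b; P_0, \ldots, P_{n_0-1})$ whose differences generate a subgroup of rank $\le r$ in $\calJ_b$. Let $B_0 \subseteq B$ be the closed set of those $b$ for which the fiber $Y_b := Y \cap \calC_b^{n_0}$ equals all of $\calC_b^{n_0}$; since $Y$ is proper in $\calC_B^{n_0}$, the set $B_0$ is itself a proper closed subvariety of $B$. One then applies the inductive hypothesis to (the irreducible components of) $\calC|_{B_0} \to B_0$, which have strictly smaller base dimension, to obtain a proper closed $B_1 \subsetneq B_0$ and a bound $N_1$ handling these exceptional fibers.

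For $b \in B^\circ := B \setminus B_0$, the fiber $Y_b$ is a proper closed subvariety of $\calC_b^{n_0}$. Any subset $S \subset \calC_b$ whose differences generate a subgroup of rank $\le r$ has all of its ordered $n_0$-tuples of distinct elements lying in $Y_b$, so it suffices to bound $|S|$ in terms of the geometric complexity of $Y_b$. This is done by induction on $n$: for a proper closed subvariety $Y \subsetneq C^n$ of a smooth irreducible curve~$C$, the locus $T := \{P \in C : \{P\} \times C^{n-1} \subset Y\}$ is a proper closed subvariety of~$C$ (and hence finite); for any $P \in S \setminus T$, the fiber $Y \cap (\{P\} \times C^{n-1})$ is a proper closed subvariety of $C^{n-1}$ to which the induction applies. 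This yields a bound on $|S|$ expressible in terms of the degrees and numbers of irreducible components of the successive fibers, all of which are constructible functions of~$b$. After removing a further proper closed subvariety $B_2 \subsetneq B^\circ$ on which these invariants misbehave, one obtains a uniform bound $N^\circ$ on $B^\circ \setminus B_2$.

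Setting $B' := B_1 \cup B_2$, a proper closed subvariety of~$B$, and $N(\pi, r) := \max(N_1, N^\circ)$ gives the desired conclusion. The base case $\dim B = 0$ reduces to finitely many individual curves, each handled by applying Lemma~\ref{L:start} and the same combinatorial argument directly. The hardest step is the fiber-wise combinatorial reduction: one must show that the bound on $|S|$ depends only on constructible invariants of $Y_b$, so that it can be uniformized across a dense open of~$B$, which forces careful bookkeeping of dimensions and degrees through the inner induction on $n$.
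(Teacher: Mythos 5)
Your proposal is correct and follows essentially the same route as the paper: Lemma~\ref{L:start} supplies the proper closed $Y \subset \calC_B^{n_0}$, the uniform bound is then extracted by the Caporaso--Harris--Mazur fibered-power induction, and your exceptional locus $B_0 = \{b : Y_b = \calC_b^{n_0}\}$ coincides with the paper's $Z_0 = B'$ (the nested conditions defining the $Z_j$ unwind to exactly this). The remaining differences are organizational rather than substantive --- you run the peeling-off induction fiber-by-fiber and uniformize afterwards via constructibility (and add a Noetherian recursion into $B_0$ that the statement does not require), whereas the paper defines the loci $Z_j$ globally on $\calC_B^j$ and bounds the fiber cardinalities $d_j$ there directly by the same constructibility input.
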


\begin{proof}
  Fix some $n$ satisfying~\eqref{E:lb} for the given values of $g$, $r$,
  and $d = \dim B$.
  Denote by $Z_n \subset \calC_B^n$ the Zariski closure of the set of
  points $(b;P_0,P_1,\ldots,P_{n-1}) \in \calC_B^n$ such that the differences
  of the~$P_j$ generate a subgroup of rank $\le r$. By Lemma~\ref{L:start},
  $Z_n$ is a proper closed subvariety of~$\calC_B^n$. Now for $1 \le j \le n$,
  we let $\rho_j \colon \calC_B^j \to \calC_B^{j-1}$ denote the forgetful
  morphism that leaves out the last point. For $j = n-1, n-2, \ldots, 0$,
  define successively $Z_j$ as the (closed) subvariety of~$\calC_B^j$ of
  points~$x$ such that $\rho_{j+1}^{-1}(x) \subset Z_{j+1}$. Since (inductively)
  $Z_{j+1}$ is a proper closed subvariety of~$\calC_B^{j+1}$, $Z_j$
  is a proper closed subvariety of~$\calC_B^j$. We let $B' = Z_0 \subset \calC_B^0 = B$.

  Arguing as in~\cite{CaporasoHarrisMazur1997}*{Proof of Lemma~1.1},
  there are integers~$d_j$ such that $\#\rho_j^{-1}(x) \cap Z_j \le d_j$
  for all $x \in \calC_B^{j-1} \setminus Z_{j-1}$.
  We now show by downward induction the following statement.

  {\em Let $0 \le m \le n$. Then there is $N_m \ge m$ such that
  for each $(b;P_0,P_1,\ldots,P_{m-1}) \in \calC_B^m \setminus Z_m$,
  whenever we choose $N_m-m+1$ distinct additional
  points $P_m, P_{m+1}, \ldots, P_{N_m} \in \calC_b$, then
  the differences of the~$P_j$ generate a subgroup of rank $> r$ in~$\calJ_b$.}

  For $m = n$ we can take $N_n = n$, by definition of~$Z_n$. Now let $m < n$
  and assume the claim is true for $m+1$ in place of~$m$. Let
  $x = (b;P_0,P_1,\ldots,P_{m-1}) \in \calC_B^m \setminus Z_m$,
  then there are at most~$d_{m+1}$ points in~$Z_{m+1}$ mapping to~$x$.
  By the inductive assumption, if we choose points $P_m, \ldots, P_{N_{m+1}}$
  with $P_m$ not one of the finitely many possibilities leading to a preimage
  in~$Z_{m+1}$, then the statement is true. In any case, once we take
  more than~$d_{m+1}$ additional (distinct) points, then at least one of
  them will lead to a preimage outside~$Z_{m+1}$. Since we can permute the
  additional points, this brings us back to the previous case. We see that
  we can take $N_m = \max\{N_{m+1}, m+d_{m+1}\}$.

  The final case $m = 0$ then gives the statement of the lemma, with
  \[ N(\pi, r) = N_0 = \max\{m + d_{m+1} : 0 \le m \le n\} \]
  (where $d_{n+1} \colonequals 0$).
\end{proof}

Now we are almost done.

\begin{theorem}
  Conjecture~\ref{ConjPink} implies Conjecture~\ref{Conj:uML}.
\end{theorem}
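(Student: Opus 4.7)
The plan is to deduce Conjecture~\ref{Conj:uML} from Lemma~\ref{L:ind} by Noetherian induction on a parameter space for genus-$g$ curves, following the pattern of~\cite{CaporasoHarrisMazur1997}*{Section~1.2}. First I would reduce the conjecture to an intrinsic statement about configurations of points on the curve: for any embedding $\iota \colon C \to J$ and any $\Gamma \subset J(\C)$ of rank $r$, if $P, Q \in \iota^{-1}(\Gamma)$ then $\iota(P) - \iota(Q) \in \Gamma$, so the pairwise differences of any finite subset of $\iota^{-1}(\Gamma)$ generate a subgroup of $J(\C)$ of rank at most~$r$. It therefore suffices to produce a constant $N(g, r)$ such that on every smooth projective curve $C/\C$ of genus~$g$, any set of more than $N(g, r)$ distinct points has pairwise differences generating a subgroup of $J(\C)$ of rank strictly greater than~$r$. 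This reformulated statement is intrinsic to $C$ and no longer involves the choice of~$\iota$.

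Next I would fix a finite-type $\C$-scheme~$H$ parametrizing all isomorphism classes of smooth projective genus-$g$ curves (for instance a suitable Hilbert scheme of tricanonically embedded curves), together with its universal family $\calC_H \to H$. Stratifying $H$ into finitely many smooth irreducible locally closed subschemes $B_1, \ldots, B_s$ and restricting the universal family yields smooth families $\pi_i \colon \calC_i \to B_i$ of genus-$g$ curves to each of which Lemma~\ref{L:ind} applies. This lemma provides a bound $N(\pi_i, r)$ together with a proper closed subvariety $B_i' \subsetneq B_i$ outside of which the configuration statement of the previous paragraph holds.

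I would then iterate. On each $B_i'$, I would apply the same stratification-plus-lemma procedure, producing smaller proper closed exceptional loci, and continue recursively. Because $\dim B_i' < \dim B_i$ at every step and $H$ is Noetherian, the descent terminates in finitely many iterations, yielding finitely many smooth families $\pi_1, \ldots, \pi_M$ whose fibers collectively cover every isomorphism class of genus-$g$ curve. Setting $N(g, r) \colonequals \max_j N(\pi_j, r)$ then produces the desired uniform bound.

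The main obstacle I anticipate is the bookkeeping needed to ensure that every isomorphism class of genus-$g$ curve over~$\C$ is captured by some family in the finite collection to which Lemma~\ref{L:ind} is applied, and that no curve can `escape' the stratification. This is essentially a descending-chain argument once a parameter space with the appropriate universality is chosen; potential overcounting of isomorphism classes across different strata is harmless since only an upper bound is required. The analogous step in~\cite{CaporasoHarrisMazur1997}*{Section~1.2} handles exactly this kind of descent and serves as the template.
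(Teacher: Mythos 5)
Your proposal is correct and follows essentially the same route as the paper: reduce Conjecture~\ref{Conj:uML} to bounding sets of points whose pairwise differences generate a subgroup of rank at most~$r$, apply Lemma~\ref{L:ind} to a universal family over a parameter space for genus-$g$ curves, and iterate on the exceptional loci using Noetherian termination, taking the maximum of the resulting bounds. Your extra care in stratifying the parameter space into irreducible pieces so that Lemma~\ref{L:ind} applies, and in making the reduction to differences explicit, fills in details the paper leaves implicit but does not change the argument.
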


\begin{proof}
  Assume Conjecture~\ref{ConjPink}.
  Fix $g \ge 2$ and~$r \ge 0$ and let $\calC_0 \to B_0$ be a universal family
  of smooth curves of genus~$g$. By Lemma~\ref{L:ind}, there is a proper closed
  subvariety $B_1 \subset B_0$ and a bound~$N_0$ such that the statement
  of Conjecture~\ref{Conj:uML} holds with this bound for all fibers of~$\calC_0$
  above points not in~$B_1$. If $B_1 \neq \emptyset$, we can apply Lemma~\ref{L:ind}
  to the restricted family $\calC_1 \to B_1$ and obtain a proper closed subvariety
  $B_2 \subset B_1$ and a bound~$N_1$ valid for all fibers above points outside~$B_2$.
  We continue this process, which must stop after finitely many steps since $B$
  is noetherian. The statement of Conjecture~\ref{Conj:uML} then holds
  with $N(g,r) = \max_j N_j$.
\end{proof}

\begin{remark}
  The same argument shows that there is such a uniform bound for any smooth family
  of curves inside abelian varieties of dimension at least~$2$ that are
  generated fiber-wise by the curves.
\end{remark}

Note that we can take $\dim B_j \le \dim B_0 = 3g - 3$. Looking at~\eqref{E:lb},
this implies that it suffices to take $n = 5 + 2r$. So we would expect that
except for points occurring systematically in certain families of curves,
there should be a bound of the
form $\ll r + 1$ for the number of points on a curve mapping into a subgroup
of rank~$r$ in the Jacobian. For hyperelliptic curves of genus~$g$, taking a Weierstrass point
as basepoint, we always have the $2g+2$ Weierstrass points mapping to points
of order~$2$ (and no other systematically occurring torsion points,
see~\cite{PoonenStoll2014}*{Section~7}).
Since any generically chosen additional set of $r$~pairs of `opposite' points
on such a curve will generate a subgroup of rank~$r$, we obtain a lower
bound of $2g + 2 + 2r \gg g + r$. In~\cite{Stoll2006-chabauty} we show that
for the family of quadratic twists of a fixed hyperelliptic curve (and over
any fixed number field~$K$), there is an upper bound of $2g + 2 + 2r$ for the
number of $K$-rational points whenever $r < g$, with at most finitely many exceptions.
In this paper, we prove an upper bound $\ll_{[K:\Q]} (r + 1) g$ for the set of $K$-rational
points when the curve is hyperelliptic and $r \le g-3$.
It appears possible that the method can be refined to give a bound of
the form $\ll_{[K:\Q]} g + r$. This leads to the following question.

\begin{question}
  Can we take $R(d,g,r) \ll_d g + r$ in Conjecture~\ref{Conj:dgr}?
  Can we perhaps even take $N(g,r) \ll g + r$ in Conjecture~\ref{Conj:uML}?
\end{question}


\section{Notation} \label{S:notation}

Until further notice, we fix the following notation.

Let $p$ be a prime number. As usual, $\Q_p$ denotes the field of $p$-adic numbers
and $\C_p$ the completion of an algebraic closure of~$\Q_p$. We let
$v \colon \C_p \to \Q \cup \{\infty\}$ denote the additive valuation on~$\C_p$,
normalized by $v(p) = 1$. We also fix the absolute value $|x| = p^{-v(x)}$
on~$\C_p$. Throughout the paper, $k \subset \C_p$ stands for a finite field extension of~$\Q_p$
with ramification index~$e$; we write $\calO$ for its
ring of integers and $\kappa$ for the
residue field. We set $q \colonequals \#\kappa$;
$k^\unr \subset \C_p$ is the maximal unramified extension of~$k$.

Let $g \ge 3$ be an integer and let $C$ be a smooth, projective and
geometrically integral curve of genus~$g$ over~$k$.
The Jacobian variety of~$C$ is denoted~$J$; the origin on~$J$ is~$O$.
We denote the image of the divisor $(P)-(Q)$ on~$C$ in~$J$ by $[P-Q]$.
We denote by~$\log_J$ the $p$-adic abelian logarithm map $J(k) \to T_O J(k) \cong k^g$.
On a sufficiently small subgroup neighborhood of~$O$, it is given by evaluating
the formal logarithm, and then extended to all of~$J(k)$ by linearity.
The space~$\Omega^1_J(k)$ of global regular $1$-forms on~$J$ defined over~$k$
agrees with the
space of invariant (under translations) $1$-forms on~$J$ and can be identified
with the cotangent space $(T_O J(k))^*$ of~$J$ at the origin. This induces a pairing
\[ \Omega^1_J(k) \times J(k) \To k, \quad
    (\omega, P) \longmapsto \langle \omega, \log_J(P) \rangle
                        \mathrel{=:} \oint_O^P \omega\,,
\]
which we call the \emph{Chabauty-Coleman pairing}. It is $k$-linear in~$\omega$
and additive (and $\calO$-linear on the kernel of reduction) in~$P$.
Its kernel on the left is trivial, and its kernel on
the right is the torsion subgroup of~$J(k)$.

Let $P_0 \in C(k)$ and let $i \colon C \to J$ be the embedding given by
$P \mapsto [P-P_0]$. Then $i^* \colon \Omega^1_J \to \Omega^1_C$ is an
isomorphism (which does not depend on~$P_0$). If $\omega \in \Omega^1_C(k)$
is $i^* \omega_J$ for some $\omega_J \in \Omega^1_J(k)$,
then we set for points $P, Q \in C(k)$
\[ \oint_P^Q \omega \colonequals \oint_{i(P)}^{i(Q)} \omega_J
                    = \oint_O^{[Q-P]} \omega_J \,.
\]
We use the symbol $\oint$ to distinguish this integral defined via abelian
logarithms from the integral~$\int$ given by $p$-adic integration
theory. This distinction will be relevant in Section~\ref{S:integ}.

Inclusions `$A \subset B$' are meant to be non-strict.


\section{Combinatorics of arithmetic graphs} \label{S:AG}

In this section, we study the combinatorics of the (smooth part of the)
special fiber of the minimal regular model~$\calC$ over~$\calO$ of a (smooth
projective geometrically integral) curve~$C$ of genus~$g \ge 2$ over~$k$.
For the general background, we refer to~\cite{Liu-book}*{Sections 9 and~10.1}.

The special fiber~$\calC_s$ of~$\calC$ decomposes
into irreducible components; we assume for now that the residue field~$\kappa$
is large enough so that the components are geometrically irreducible.
Let $\Gamma$ be one of these components of~$\calC_s$.
If $W$ denotes a relative canonical divisor, then by the adjunction formula we have,
writing as usual $p_a(\Gamma)$ for the arithmetic genus of~$\Gamma$,
\begin{equation} \label{E:adjunction}
  \Gamma \cdot W = 2 p_a(\Gamma) - 2 - \Gamma^2 \,.
\end{equation}
By~\cite{Liu-book}*{Corollary~9.3.26}, $g \ge 2$ implies that $\Gamma \cdot W \ge 0$.
So there are two cases: $\Gamma \cdot W > 0$ and $\Gamma \cdot W = 0$.
If $m(\Gamma)$ denotes the multiplicity of~$\Gamma$ in~$\calC_s$, then
\begin{equation} \label{E:adjglobal}
  2g-2 = \calC_s \cdot W = \sum_\Gamma m(\Gamma) (\Gamma \cdot W) \,,
\end{equation}
which implies that there can be at most $2g-2$ components~$\Gamma$
having $\Gamma \cdot W > 0$, with components counted according to multiplicity.
On the other hand, $\Gamma \cdot W = 0$
means $p_a(\Gamma) = 0$ and $\Gamma^2 = -2$ or $p_a(\Gamma) = 1$ and
$\Gamma^2 = 0$ (the intersection pairing is negative semidefinite,
so $\Gamma^2 \le 0$). $\Gamma^2 = 0$ would imply that $\Gamma$ is the
only component; then $2g-2 = 0$ and so $g = 1$, which we have excluded.
So $\Gamma$ is isomorphic to~$\PP^1$ over~$\kappa$ and has self-intersection~$-2$.
Such components are called \emph{$(-2)$-curves}.

Associated to the special fiber~$\calC_s$ is a graph~$G$, whose vertices
correspond to the components of~$\calC_s$, with two (distinct) vertices
$\Gamma_1$ and~$\Gamma_2$ joined by $\Gamma_1 \cdot \Gamma_2$ edges.
The graph~$G$ is connected. To each vertex~$\Gamma$ we associate
its multiplicity~$m(\Gamma)$ and its arithmetic genus~$p_a(\Gamma)$.
We call $G$ the \emph{arithmetic graph} associated to~$\calC$.
This data is equivalent to what is called a `type' in~\cite{ArtinWinters}
or~\cite{Liu-book}*{Definition~10.1.55}.
The intersection pairing satisfies
\[ \Gamma \cdot \sum_{\Gamma'} m(\Gamma') \Gamma' = \Gamma \cdot \calC_s = 0 \,. \]
Using the adjunction formula~\eqref{E:adjunction}, we can write this as
\begin{equation} \label{E:GammaW}
  \sum_{\Gamma' \neq \Gamma} m(\Gamma') \Gamma \cdot \Gamma'
     = -m(\Gamma) \Gamma^2
     = m(\Gamma) (\Gamma \cdot W + 2) - 2 m(\Gamma) p_a(\Gamma) \,.
\end{equation}

We are interested in the structure of the smooth part~$\calC_s^\smooth$
of the special fiber. It is the union of the components of multiplicity~$1$
minus their singular points and the points where they meet other components.
We have already seen that there can be at most~$2g-2$ components~$\Gamma$
of multiplicity~$1$ and with $\Gamma \cdot W > 0$.
The remaining components of~$\calC_s^\smooth$ are $(-2)$-curves
of multiplicity~$1$, so by~\eqref{E:GammaW} the total intersection
number with other components is~$2$. We note that not all components
of~$\calC_s$ can be $(-2)$-curves, since then \hbox{$2g-2 = W \cdot \calC_s$} would vanish,
contradicting the assumption $g \ge 2$. This implies that there cannot be
three $(-2)$-curves of multiplicity~$1$ meeting in one point or two meeting
in one point with intersection multiplicity~$2$, since in these cases there
could be no other components.
There are therefore the following possibilities for
how a $(-2)$-curve~$\Gamma$ of multiplicity~$1$ can meet other components.
\begin{enumerate}[(1)]\addtolength{\itemsep}{1mm}
  \item $\Gamma$ meets two components of multiplicity~$1$ in two distinct points.
        Then $\Gamma$ is part of a maximal \emph{chain} of such components that
        connects two components of multiplicity~$1$ (which can be identical)
        that are not $(-2)$-curves.
  \item \label{case2} $\Gamma$ meets a component of multiplicity~$2$ in one point.
  \item \label{case3}
        $\Gamma$ meets two components $\Gamma'$, $\Gamma''$ of multiplicity~$1$
        in the same point such that
        \begin{enumerate}[(\ref{case3}a)]
          \item \label{case3a} either none of $\Gamma'$, $\Gamma''$ is a $(-2)$-curve, or
          \item \label{case3b} $\Gamma'$ is a $(-2)$-curve, but $\Gamma''$ is not.
        \end{enumerate}
  \item \label{case4}
        $\Gamma$ meets a component of multiplicity~$1$, which is not a $(-2)$-curve,
        in one point with intersection multiplicity~$2$.
\end{enumerate}
In cases \eqref{case2} to~\eqref{case4}, $\Gamma \cap \calC_s^\smooth$
is isomorphic to~$\Aff^1$. We will call such components of~$\calC_{s}$
simply \emph{$\Aff^1$-components}.

In general, there can also be chains consisting of $(-2)$-curves of higher
(constant) multiplicity. They do not form part of~$\calC_s^\smooth$,
so they are not of interest for our purposes.
Artin and Winters~\cite{ArtinWinters}*{Theorem~1.6} show that there
are only finitely many different `types' of fixed genus up to an equivalence
that ignores the lengths of chains of any multiplicity as above.
This implies in particular that there must be bounds that depend only
on~$g$ for the number of (maximal) chains of $(-2)$-curves of multiplicity~$1$
and for the number of $\Aff^1$-components.
The following result gives explicit and optimal such bounds.

\begin{theorem} \label{T:specialfiber}
  Let $\calC_s$ be the special fiber of the minimal proper regular model
  of a smooth projective geometrically integral curve~$C$ of genus~$g \ge 2$
  over a $p$-adic field~$k$. Then there are numbers $t, u \ge 0$ with
  $t + u \le g - a$, where $a$ denotes the abelian rank of the special fiber
  of the N\'eron model of the Jacobian of~$C$, such that
  \begin{enumerate}[\upshape (i)]\addtolength{\itemsep}{1mm}
    \item \label{spf1} The number of components $\Gamma$ of~$\calC_s$
                       with $\Gamma \cdot W > 0$ is $N \le 2g-2$.
    \item \label{spf2} The number of maximal chains of $(-2)$-curves of multiplicity~$1$
                       in~$\calC_s$ is at most \\ $N-1+t \le 2g-3+t$.
    \item \label{spf3} The number of $\Aff^1$-components in~$\calC_s$ is at most~$3u$.
  \end{enumerate}
\end{theorem}

\begin{remark}
  It is not very hard to construct an arithmetic graph of genus~$g$
  with $2g-2$ components~$\Gamma$ such that $\Gamma \cdot W > 0$ and having
  $2g-3+t$ chains and $3(g-t)$ $\Aff^1$-components, for every $t = 0,1,\ldots,g$.
  We leave this as an exercise for the interested reader. This shows that
  the bounds given in the theorem above are optimal.
\end{remark}

\begin{proof}
  We have $N \le 2g-2$ by~\eqref{E:adjglobal}.

  We note that in terms of the graph~$G$ associated to the special
  fiber~$\calC_s$, a component as in case \eqref{case3a} or~\eqref{case4} above
  is indistinguishable from a chain of length~$1$, and the two $\Aff^1$-components
  involved in case~\eqref{case3b} are indistinguishable from a chain of length~$2$.
  (Indeed, after a slight deformation of the special fiber that does not change
  the intersection multiplicities of the components, the point of intersection
  breaks up into two or three ordinary double points, and the respective components
  do form a chain of length $1$ or~$2$.) Write $c$ for the number of maximal chains,
  $d_{\text{3a}}$, $d_{\text{3b}}$, $d_{\text{4}}$ for the number of $\Aff^1$-components
  as in cases \eqref{case3a}, \eqref{case3b} and~\eqref{case4} above, and $d$
  for the number of remaining $\Aff^1$-components. We show that there are numbers
  $t', u' \ge 0$ with $t' + u' \le g - a$ such that
  \begin{equation} \label{claim}
    c + d_{\text{3a}} + \frac{1}{2} d_{\text{3b}} + d_{\text{4}} \le N - 1 + t'
     \qquad\text{and}\qquad
     d \le 3 u' \,.
  \end{equation}
  Claims \eqref{spf2} and~\eqref{spf3} follow by taking $t = t' - \delta$
  and $u = u' + \delta$ with
  $\delta = d_{\text{3a}} + \frac{1}{2} d_{\text{3b}} + d_{\text{4}}$
  (note that $d_{\text{3b}}$ is even).

  We write $\chi(G) = 1 - t'$ for the Euler characteristic of~$G$, where
  $t'$ denotes the number of independent loops in~$G$.

  We now bound the number of chains together with the `false chains' coming
  from $\Aff^1$-components in cases \eqref{case3a}, \eqref{case3b} or~\eqref{case4},
  Consider the subgraph~$G'$ of~$G$
  spanned by the $N$~vertices corresponding to components~$\Gamma$
  with $\Gamma \cdot W > 0$ and by the vertices corresponding
  to components in chains (false or otherwise).
  Contracting each of these chains to an edge, we obtain
  a graph~$G''$ whose Euler characteristic equals that of~$G'$, which in turn
  cannot be smaller than that of~$G$ (since $G$ is connected). So we find that
  \begin{align*}
    c + d_{\text{3a}} + \frac{1}{2} d_{\text{3b}} + d_{\text{4}}
     &= \#\{\text{chains}\} + \#\{\text{false chains}\} \\
     &\le \#\{\text{edges of $G''$}\}
      = N - \chi(G'')
      \le N - \chi(G)
      = N - 1 + t'
  \end{align*}
  as claimed in the first inequality in~\eqref{claim}.

  To obtain a bound on the number~$d$ of the remaining $\Aff^1$-components,
  we classify the vertices $\Gamma$ of~$G$ according to the pair
  $(m(\Gamma), \Gamma \cdot W) \in \Z_{>0} \times \Z_{\ge 0}$ of invariants.
  Given $m \ge 1$ and $w \ge 0$, we call a vertex~$\Gamma$ of~$G$
  with $m(\Gamma) = m$ and $\Gamma \cdot W = w$ an \emph{$(m,w)$-vertex}.
  We denote by $v_{(m,w)}$ the number of $(m,w)$-vertices.
  We consider each edge of~$G$ as an oriented edge with both
  possible choices of orientation. We then denote by~$e_{(m,w),(m',w')}$
  the number of oriented edges leading from an $(m,w)$-vertex to
  an $(m',w')$-vertex. We also write $p_{(m,w)}$ for the sum of~$p_a(\Gamma)$
  over all $(m,w)$-vertices~$\Gamma$.

  Taking the sum of~\eqref{E:GammaW} over all $(m,w)$-vertices, we obtain
  \[  m (w + 2) v_{(m,w)} - 2 m p_{(m,w)} = \sum_{(m',w')} m' e_{(m,w),(m',w')}\,, \]
  or equivalently,
  \begin{equation} \label{E:relation1}
    v_{(m,w)} = \frac{1}{m(w+2)} \sum_{(m',w')} m' e_{(m,w),(m',w')} + \frac{2}{w+2} p_{(m,w)}\,,
  \end{equation}
  which allows us to replace $v_{(m,w)}$ by the right hand side.
  If we use this in~\eqref{E:adjglobal}, this gives
  \begin{equation} \label{E:relation2}
    2g - 2
      = \sum_{(m,w)} m w \,v_{(m,w)}
      = \sum_{(m,w),(m',w')} \frac{w m'}{w+2} e_{(m,w),(m',w')}
          + \sum_{(m,w)} \frac{2 w m}{w+2} p_{(m,w)} \,.
  \end{equation}
  In addition, remembering that $G$ is connected and that for $(m,w) \neq (m',w')$,
  the sum
  $e_{(m,w),(m',w')} + e_{(m',w'),(m,w)}$ counts twice the number of edges between
  vertices with invariants $(m,w)$ and~$(m',w')$, whereas $e_{(m,w),(m,w)}$ counts twice
  the number of edges between $(m,w)$-vertices, we have the relation
  \[ 2 \sum_{(m,w)} v_{(m,w)} - 2 + 2 t' = \sum_{(m,w),(m',w')} e_{(m,w),(m',w')} \,, \]
  which we rewrite using~\eqref{E:relation1} as
  \[ \sum_{(m,w),(m',w')} \Bigl(\frac{2m'}{m(w+2)} - 1\Bigr) e_{(m,w),(m',w')}
        + \sum_{(m,w)} \frac{4}{w+2} p_{(m,w)}
      = 2 - 2 t' \,.
  \]
  Adding \eqref{E:relation2} to this, we finally have
  \begin{align} \label{E:relation3}
    \sum_{(m,w),(m',w')} \Bigl(\frac{m'(mw+2)}{m(w+2)} - 1\Bigr) e_{(m,w),(m',w')}
       &= 2(g - t' - p') - 2 \sum_{(m,w)} \frac{w(m-1)}{w+2} p_{(m,w)} \\
       &\le 2(g - t' - p')\,, \nonumber
  \end{align}
  where we have set
  \[ p' = \sum_{(m,w)} p_{(m,w)} = \sum_\Gamma p_a(\Gamma) \ge \sum_\Gamma p_g(\Gamma) = a \,. \]
  Here $p_g(\Gamma)$ denotes the geometric genus of~$\Gamma$.
  We set $u' = g - t'- p'$; then $t' + u' = g - p' \le g - a$.
  Let `$<$' denote the lexicographical ordering of the pairs~$(m,w)$.
  Using the obvious equality $e_{(m,w),(m',w')} = e_{(m',w'),(m,w)}$,
  we can rewrite~\eqref{E:relation3} as
  \begin{align}
    \sum_{(m,w)} \frac{(m-1)w}{w+2} e_{(m,w),(m,w)}
      {} + \!\!\!\!\sum_{(m,w) < (m',w')}\!\!
          \Bigl(\frac{m'(mw+2)}{m(w+2)} + \frac{m(m'w'+2)}{m'(w'+2)} - 2\Bigr)
                              e_{(m,w),(m',w')}
      \le 2u' \,.  \label{E:relation4}
  \end{align}
  We can bound the coefficient of~$e_{(m,w),(m',w')}$ in~\eqref{E:relation4} from below:
  \begin{align*}
    \frac{m'(mw+2)}{m(w+2)} + \frac{m(m'w'+2)}{m'(w'+2)} - 2
      &= m' - 2 \frac{m'(m-1)}{m(w+2)} + m - 2 \frac{m(m'-1)}{m'(w'+2)} - 2 \\
      &\stackrel{\text{\makebox[0mm][c]{$w \ge 0$}}}{\ge} m' - \frac{m'(m-1)}{m} + m - \frac{m(m'-1)}{m'} - 2 \\
      &= \frac{m'}{m} + \frac{m}{m'} - 2 \ge 0 \,.
  \end{align*}
  So all coefficients on the left hand side of~\eqref{E:relation4} are nonnegative;
  the coefficient of~$e_{(m,w),(m,w)}$ vanishes if and only if $w = 0$ or $m = 1$, and
  the coefficient of~$e_{(m,w),(m',w')}$ vanishes if and only if we have equality
  everywhere in the above, which is equivalent to $m = m' = 1$ (or $m = m'$ and $w = w' =0$,
  but then $(m,w) = (m',w')$).

  Let $\lambda_{(m,w),(m',w')}$ denote the coefficient of $e_{(m,w),(m',w')}$
  in~\eqref{E:relation4}. Then
  \begin{align*}
    \lambda_{(1,w),(2,0)} &= \frac{1}{2} \quad \text{for all $w \ge 0$}, 
    &\lambda_{(1,0),(2,w')} &\ge \frac{2}{3} \quad \text{for all $w' \ge 1$}, \\
    \lambda_{(2,0),(2,w')} &\ge \frac{1}{3} \quad \text{for all $w' \ge 1$}, 
    &\lambda_{(2,0),(3,w')} &\ge \frac{1}{6} \quad \text{for all $w' \ge 0$}.
  \end{align*}
  Using this in~\eqref{E:relation4} we obtain
  \begin{equation} \label{E:relation5}
    \frac{1}{2} e_{(1,0),(2,0)} + \frac{1}{2} \sum_{w \ge 1} e_{(1,w),(2,0)}
      + \frac{1}{3} \sum_{w' \ge 1} e_{(2,0),(2,w')}
      + \frac{1}{6} \sum_{w' \ge 0} e_{(2,0),(3,w')}
      + \frac{2}{3} \sum_{w' \ge 1} e_{(1,0),(2,w')}
      \le 2 u' \,.
  \end{equation}
  We now claim that
  \begin{equation} \label{E:crucial}
    3 \sum_{w \ge 1} e_{(1,w),(2,0)}
      + 2 \sum_{w' \ge 1} e_{(2,0),(2,w')}
      + \sum_{w' \ge 0} e_{(2,0),(3,w')} \ge e_{(1,0),(2,0)} \,.
  \end{equation}
  Assuming this for a moment, we can use~\eqref{E:crucial} in~\eqref{E:relation5}
  to obtain
  \[ \frac{2}{3} \sum_{w' \ge 0} e_{(1,0),(2,w')} \le 2 u'
    \qquad\text{or equivalently,}\qquad
    \sum_{w' \ge 0} e_{(1,0),(2,w')} \le 3 u' \,.
  \]
  The left hand side counts exactly the number~$d$ of $(-2)$-curves of multiplicity~$1$
  that meet a component of multiplicity~$2$, so this finishes the proof
  of claim~\eqref{claim}.

  It remains to prove~\eqref{E:crucial}. We first observe that contracting
  an edge between two $(2,0)$-vertices does not change
  the genus or the topological properties of~$G$ and also does not
  affect~\eqref{E:relation4}. So we can assume without loss of generality
  that no such edges are present. Equivalently, we can consider chains of
  $(2,0)$-vertices instead of single $(2,0)$-vertices in the argument below.
  We now consider those $(2,0)$-vertices
  that contribute to $e_{(1,0),(2,0)}$, i.e., that have an edge
  to a $(1,0)$-vertex. Let $a_j$ ($1 \le j \le 3$) denote the number of such
  vertices~$\Gamma$ such that the highest multiplicity of a vertex connected
  to~$\Gamma$ is~$j$. Since $g \ge 2$, there cannot be a $(2,0)$-vertex
  connected only to $(1,0)$-vertices, as this would give rise to a connected
  component of genus~$1$, contradicting the fact that $G$ is connected.
  This implies that a vertex counted by~$a_j$ can have at most $(4-j)$ edges
  to $(1,0)$-vertices; it also has at least one edge to a vertex with multiplicity~$j$
  that is not a $(1,0)$-vertex. So
  \[ \sum_{w \ge 1} e_{(1,w),(2,0)} \ge a_1, \quad
     \sum_{w' \ge 1} e_{(2,0),(2,w')} \ge a_2, \quad
     \sum_{w' \ge 0} e_{(2,0),(3,w')} \ge a_3,
  \]
  and therefore
  \[ e_{(1,0),(2,0)}
      \le 3 a_1 + 2 a_2 + a_3
      \le 3 \sum_{w \ge 1} e_{(1,w),(2,0)}
          + 2 \sum_{w' \ge 1} e_{(2,0),(2,w')}
          + \sum_{w' \ge 0} e_{(2,0),(3,w')}
  \]
  as claimed.
\end{proof}

\begin{remark}
  One can in fact take $t$ and~$u$ in Theorem~\ref{T:specialfiber} to be the
  toric and unipotent ranks
  of the special fiber of the N\'eron model of the Jacobian of~$C$.
  For claim~\eqref{spf2}, this follows from a similar argument as in the
  proof above, but using the bipartite graph~$G'$ whose vertices correspond to
  the components of the special fiber and the intersection points of components,
  with edges whenever a point lies on a component. This version of the
  reduction graph avoids the `false' chains and satisfies $1 - \chi(G') \le $
  the toric rank~$t$, compare~\cite{Liu-book}*{Exercise~10.1.19}.

  For claim~\eqref{spf3}, we recall from the proof above that the bound on the number
  of $\Aff^1$-components
  is $3(g - t' + \delta - p')$. We have $t' - \delta + p' \ge t + a = g - u$,
  where $u$ denotes the unipotent rank, so that $g - t' + \delta - p' \le u$.
  Note that $t' - \delta$ is still an upper bound for the part of the toric rank
  coming from loops in the configuration of components --- `false' chains give
  rise to `false' loops --- whereas $p'$ is an upper bound
  for $a$ plus the part of the toric rank coming from individual components. \\[1ex]
  For our intended application, the version as given in Theorem~\ref{T:specialfiber}
  is sufficient, though.
\end{remark}

In general, some of the components of~$\calC_s$ may not be defined
over~$\kappa$. If a chain contains a component defined over~$\kappa$,
then either all components of the chain are defined over~$\kappa$,
or else the chain contains an odd number of components of which only
the middle one is defined over~$\kappa$ (and the action of Frobenius
reverses the orientation of the chain).


\section{Partition into disks and annuli} \label{S:part}

We keep the notation introduced so far. Let $P \in C(k)$ be a point.
Then $P$ reduces to a point $\bar{P} \in \calC_s^\smooth(\kappa)$,
and so $\bar{P}$ is either on a component~$\Gamma$ with $\Gamma \cdot W > 0$
(and multiplicity~$1$), or on an $\Aff^1$-component, or on a component
belonging to a chain. We bound the number of smooth $\kappa$-points
occurring in the first two cases. Let $a$, $t$ and~$u$ be as in
Theorem~\ref{T:specialfiber}; we can assume that $a + t + u = g$.
Denoting by~$p_g(\Gamma)$ the geometric
genus of the component~$\Gamma$ and writing $\Gamma_1, \ldots, \Gamma_{N'}$
for the components occurring in the first case (with $N' \le N$, since we
only consider components defined over~$\kappa$ and with multiplicity~$1$),
we obtain the bound
\[ \sum_{j=1}^{N'} \bigl(q + 1 + 2 p_g(\Gamma_j) \sqrt{q}\bigr)
     \le (2g-2)(q+1) + 2 \sum_{j=1}^{N'} p_g(\Gamma_j) \sqrt{q}
     \le (2g-2)(q+1) + 2a \sqrt{q}
\]
for the number of smooth $\kappa$-points on components having positive
intersection with~$W$. For the number of smooth $\kappa$-points
on $\Aff^1$-components, we have the bound $3 u q$, since each \hbox{$\Aff^1$-component}
defined over~$\kappa$ has $q$ smooth $\kappa$-points. For $a+u = g-t$ fixed,
the sum of these bounds is maximal when $a = 0$, leading to a bound of
\[ (2g-2)(q+1) + 3(g-t)q = (5q+2) (g-1) - 3 q (t-1) \]
for the number of smooth $\kappa$-points outside components belonging to chains.
Each such point~$P$ gives rise to a \emph{residue disk}, which is the subset
of $C(k)$ of points reducing to~$P$; these subsets are analytically isomorphic
to the sets of $k$-points of open $p$-adic disks over~$k$ in the following sense.

\begin{definition} \label{D:disk}
  We let $D_{0,k}$ denote the $p$-adic analytic \emph{open unit disk} over~$k$.
  Its ring of \emph{analytic regular functions} is the subring of~$k\pws{z}$
  of power series converging whenever $|z| < 1$ (for a power series
  $f(z) = \sum_{n=0}^\infty a_n z^n$, this means that $|a_n| r^n \to 0$
  for all $0 < r < 1$). We call any analytic isomorphism $u \colon D_{0,k} \to D_{0,k}$
  a \emph{coordinate} on~$D_{0,k}$. It can be checked that for any analytic
  map $h \colon D_{0,k} \to D_{0,k}$, the map~$u$ given by $u(z) = z(1 + h(z))$
  is a coordinate on~$D_{0,k}$.

  For $k \subset K \subset \C_p$ a field extension, we set
  $D_0(K) = \{\xi \in K : |\xi| < 1\}$.

  An \emph{(open) disk in~$C$} is an injective analytic map $\varphi \colon D_{0,k} \to C$
  (i.e., given by coordinates that are analytic regular functions on~$D_{0,k}$).
\end{definition}

Now consider a maximal chain of $(-2)$-curves of multiplicity~$1$
in the special fiber~$\calC_s$. Its two ends each meet some other component of
multiplicity~$1$ transversally. Contracting the components in the chain,
we obtain another model~$\calC'$ of~$C$ such that the image of the chain
in~$\calC'_s$ is an ordinary double point~$Q$. (We consider only chains containing
a component defined over~$\kappa$. If the action of Frobenius reverses the
orientation of the chain, we replace $k$ by its unramified extension of
degree~$2$, so that the Frobenius action is trivial. Since the bound we will
obtain for the number of relevant points in the residue annulus of~$Q$
does not depend on~$q$ and so is valid even for $k^\unr$-points, we do not
lose anything in this way.) By~\cite{BoschLuetkebohmert-stabredI}*{Proposition~2.3},
the preimage of~$Q$ in~$C(k)$ under the reduction map is analytically isomorphic
to the $k$-points of an open annulus over~$k$ in the sense of Definition~\ref{D:annulus}
below. The number of such annuli equals the number of chains (defined over~$\kappa$)
and so is bounded according to Theorem~\ref{T:specialfiber} by~$2g - 3 + t$.

\begin{definition} \label{D:annulus}
  Let $0 < \alpha < 1$ be such that $\alpha = |\xi|$ for some $\xi \in k$.
  We let $A_{\alpha,k}$ denote the standard $p$-adic \emph{open annulus} over~$k$
  of height~$\alpha$. Its ring of \emph{analytic regular functions} is the ring of
  (infinite in both directions) Laurent series in~$z$ converging whenever
  $\alpha < |z|  < 1$ (for $f(z) = \sum_{n=-\infty}^\infty a_n z^n$, this means
  that $\lim_{n \to \pm\infty} |a_n| r^n = 0$ for all $\alpha < r < 1$).
  We call any analytic isomorphism $u \colon A_{\alpha,k} \to A_{\alpha,k}$
  a \emph{coordinate} on~$A_{\alpha,k}$. It can be checked that for any analytic
  map $h \colon A_{\alpha,k} \to D_{0,k}$, the map~$u$ given by $u(z) = z(1 + h(z))$
  is a coordinate on~$A_{\alpha,k}$
  (see for example~\cite{BGR-book}*{Lemma~9.7.1/1 and Proposition~9.7.1/2},
  applied to all closed annuli in~$A_{\alpha,K}$ for $k \subset K \subset \C_p$).

  For $k \subset K \subset \C_p$ a field extension, we set
  $A_\alpha(K) = \{\xi \in K : \alpha < |\xi| < 1\}$.

  An \emph{(open) annulus in~$C$} is an injective analytic map $\varphi \colon A_{\alpha,k} \to C$
  (i.e., given by coordinates that are analytic regular functions on~$A_{\alpha,k}$),
  for some $\alpha$ as above.
\end{definition}

Summarizing the discussion above, we have shown:
\begin{proposition} \label{P:DAbounds}
  Let $C$ be a smooth projective geometrically integral curve over~$k$ of genus~$g$.
  Then there is a number $0 \le t \le g$ such that $C(k)$ can be written as a
  disjoint union of the sets of $k$-points of at most $(5q+2) (g-1) - 3 q (t-1)$
  open disks and at most $2(g-1) + (t-1)$ open annuli in~$C$.
\end{proposition}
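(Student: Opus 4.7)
The plan is to partition $C(k)$ according to the reduction of each point in the minimal proper regular model $\calC$ of $C$ over $\calO$, and then apply the combinatorial and analytic input already assembled. Since $C$ is smooth, every $P \in C(k)$ extends uniquely to a section $\Spec \calO \to \calC$, and so reduces to a point $\bar P \in \calC_s^\smooth(\kappa)$. The classification preceding Theorem~\ref{T:specialfiber} partitions these smooth points into three sorts: (a) points on a multiplicity-$1$ component $\Gamma$ with $\Gamma \cdot W > 0$; (b) points on an $\Aff^1$-component; and (c) points on a component of a chain. I will handle (a) and (b) by residue disks, and (c) by residue annuli.

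For the disk count, each smooth $\kappa$-point of type (a) or (b) is the reduction of an open residue disk in $C(k)$, so it suffices to bound $\#\calC_s^\smooth(\kappa)$ restricted to the components of those types. For type (a), there are at most $2g-2$ components by Theorem~\ref{T:specialfiber}(i), and the Weil bound gives $\#\Gamma(\kappa) \le q + 1 + 2 p_g(\Gamma) \sqrt{q}$; summing, and noting that $\sum p_g(\Gamma) \le a$ (the abelian rank of the special fiber of the N\'eron model of $J$), produces the bound $(2g-2)(q+1) + 2a\sqrt{q}$. For type (b), Theorem~\ref{T:specialfiber}(iii) gives at most $3u$ such components, each with exactly $q$ smooth $\kappa$-points, hence a contribution of $3uq$. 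Under the constraint $a + u = g - t$, the linear function $2a\sqrt{q} + 3uq$ is maximized at $a = 0$ since $3q \ge 2\sqrt{q}$ for $q \ge 1$; this yields the claimed disk bound $(5q+2)(g-1) - 3q(t-1)$.

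For the annulus count, I would contract all components of a given chain to a single point. The result is another model $\calC'$ of $C$ in which the chain collapses to an ordinary double point $Q$ meeting two multiplicity-$1$ components transversally (or meeting a single such component transversally in two branches). The classical result of Bosch--L\"utkebohmert \cite{BoschLuetkebohmert-stabredI}*{Proposition~2.3} then identifies the preimage of $Q$ in $C(k)$ under reduction to $\calC'$ with an open annulus $\{x : \alpha < |x| < 1\}$ for some $\alpha \in |k^{\times}|$. Each chain defined over $\kappa$ thus produces one residue annulus, and by Theorem~\ref{T:specialfiber}(ii) there are at most $2g - 3 + t$ chains.

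The only subtlety I anticipate is the action of Frobenius on a chain whose components are not individually defined over $\kappa$. As already noted in the paragraph following Theorem~\ref{T:specialfiber}, such a chain has odd length with only its middle component $\kappa$-rational and Frobenius reversing the orientation; replacing $k$ by its unramified quadratic extension splits this and yields the annulus description, and since the eventual disk/annulus \emph{count} in the proposition depends only on $g$ and $t$ (and the annulus analysis to be used later does not lose anything by passing to $k^\unr$), this harmless enlargement is legitimate. Once this is dealt with, the disjointness of the decomposition follows automatically from the disjointness of the underlying partition of $\calC_s^\smooth(\kappa)$ into its three sorts of components, and the proposition follows.
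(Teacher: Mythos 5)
Your proposal is correct and follows essentially the same route as the paper: reduce points to the smooth locus of the special fiber of the minimal regular model, bound the disk count via the Weil bound on components with $\Gamma \cdot W > 0$ plus the $\Aff^1$-component count from Theorem~\ref{T:specialfiber}, maximize over $a + u = g - t$ at $a = 0$, and obtain the annuli by contracting each chain to an ordinary double point and invoking Bosch--L\"utkebohmert, with the same unramified quadratic base change to handle Frobenius-reversed chains. No gaps.
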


Let $C_D(k)$ be the union of the disks and $C_A(k)$ the union of the annuli
in this partition.


\section{Annuli in hyperelliptic curves} \label{S:hyp}

In this section we give an explicit description of the annuli on
a hyperelliptic curve. This is used in Section~\ref{S:integ} below to obtain
bounds for the number of points on an annulus that map into a given subgroup
of the Jacobian. We do this for a $p$-adic field~$k$ when $p$ is \emph{odd}.
We proceed in three steps, as follows.
\begin{enumerate}[1.]\addtolength{\itemsep}{1mm}
  \item We construct disks and annuli in~$C$ from disks and annuli in~$\PP^1$.
  \item We give a classification of analytic involutions on disks and annuli.
  \item We use Step~2 to show that all annuli in~$C$ arise as in Step~1.
\end{enumerate}
We then use this explicit description to describe the restriction of the global
regular differentials on~$C$ to these annuli.

We begin with the construction of disks and annuli in the hyperelliptic
curve~$C$. We write $\iota \colon C \to C$ for the hyperelliptic involution,
$\pi \colon C \to \PP^1$ for the hyperelliptic double cover
and $\Theta \subset \PP^1$ for its set of branch points; note that $\#\Theta = 2g+2$
is even. We can assume that $\infty \notin \Theta$; then an equation for~$C$ is given by
\[ y^2 = f(x) = c \prod_{\theta \in \Theta} (x - \theta) \]
with some $c \in k^\times$. For $0 \neq \theta \in \Theta$ we set
\[ f_\theta^+(x) = \Bigl(1 - \frac{\theta}{x}\Bigr)^{1/2} \in k\pws{x^{-1}}
   \qquad\text{and}\qquad
   f_\theta^-(x) = \Bigl(1 - \frac{x}{\theta}\Bigr)^{1/2} \in k\pws{x} \,.
\]
Note that $f_\theta^+(x)$ converges for $|x| > |\theta|$ and that
$f_\theta^-(x)$ converges for $|x| < |\theta|$ (here we use that $p$ is odd).
We then have
\[ x - \theta = x f_\theta^+(x)^2 \qquad\text{and}\qquad
   x - \theta = -\theta f_\theta^-(x)^2
\]
for $x$ in the respective domain of convergence.

\begin{lemma} \label{L:disk}
  Let $\varphi \colon D_{0,k} \stackrel{\isom}{\to} D \subset \PP^1_k$ be an open disk.
  \begin{enumerate}[\upshape(1)]\addtolength{\itemsep}{1mm}
    \item \label{Ldisk1}
          If $D(\C_p) \cap \Theta = \emptyset$, then let $\xi \in D(k)$.
          If $f(\xi)$ is not a square in~$k$, then $\pi^{-1}(D) \cap C(k)$ is empty.
          Otherwise $\pi^{-1}(D)$ is the union of
          two disjoint open disks in~$C$, each isomorphic to~$D$ via~$\pi$.
    \item \label{Ldisk2}
          If $D(\C_p) \cap \Theta = \{\theta_1\}$, then $\theta_1 \in k$.
          We assume that the radius~$r$ of~$D$ (in terms of a coordinate on~$\PP^1$
          such that $\infty \notin D$) satisfies $r |f'(\theta_1)| = |\xi|^2$
          for some $\xi \in k$. Then $D' = \pi^{-1}(D)$ is a disk in~$C$.
          In terms of suitable coordinates on $D'$ and~$D$,
          $\pi \colon D' \to D$ is given by $z \mapsto z^2$.
          The hyperelliptic involution acts on~$D'$ as $z \mapsto -z$.
    \item \label{Ldisk3}
          If $D(\C_p) \cap \Theta = \{\theta_1, \theta_2\}$ has two elements,
          then $(x-\theta_1)(x-\theta_2)$ has coefficients in~$k$. The set
          $\pi^{-1}(D) \cap C(k)$ is either contained in
          the preimage of the smallest closed disk containing $\theta_1$ and~$\theta_2$,
          or else $\pi^{-1}(D)$ is an annulus~$A$ in~$C$ such that
          in terms of suitable coordinates on $A$ and~$D$, $\pi \colon A \to D$
          is given by $z \mapsto z + \beta/z$ with some $\beta \in k^\times$.
          The hyperelliptic involution acts on~$A$ as $z \mapsto \beta/z$.
  \end{enumerate}
\end{lemma}

\begin{proof}
  We can (after possibly a coordinate change on~$\PP^1$)
  assume that $\varphi = \id$ and $D = D_0$. Then in case~\eqref{Ldisk1},
  we can take $\xi = 0$, and we have $|\theta| \ge 1$ for all $\theta \in \Theta$.
  So on~$D$ we can write the equation of~$C$ as
  \[ y^2 = c \prod_{\theta \in \Theta} \theta \cdot
             \Bigl(\prod_{\theta \in \Theta} f_\theta^-(x)\Bigr)^2
         = c' h(x)^2 \,,
  \]
  where $c' = c \prod_{\theta \in \Theta} \theta = f(0)$ and
  $h(x) = \prod_{\theta \in \Theta} f_\theta^-(x)$.
  If $c'$ is not a square in~$k$, then this equation has
  no solution in~$k$ and so $\pi^{-1}(D)$ does not contain $k$-points of~$C$.
  Otherwise write $c' = \gamma^2$ for some $\gamma \in k^\times$.
  Then $\pi^{-1}(D)$ is the disjoint union of
  \[ D^+ = \bigl\{(\xi, \gamma h(\xi)) : \xi \in D\bigr\}
     \qquad\text{and}\qquad
     D^- = \bigl\{(\xi, -\gamma h(\xi)) : \xi \in D\bigr\} \,,
  \]
  and the projection to the first coordinate $\pi \colon D^\pm \to D$ is an
  analytic isomorphism.

  In case~\eqref{Ldisk2}, we first observe that $\theta_1$ must be fixed under the
  action of the absolute Galois group of~$k$, since $D$ and~$\Theta$ are; it follows
  that $\theta_1 \in k$. We can then in addition assume that $\theta_1 = 0$.
  Since we assume that $D = D_0$, we have $r = 1$.
  Similarly as in case~\eqref{Ldisk1} we write the equation of~$C$ on~$D$ as
  \[ y^2 = -c \prod_{0 \neq \theta} \theta \cdot x
           \Bigl(\prod_{0 \neq \theta \in \Theta} f_\theta^-(x)\Bigr)^2
         = c' x h(x)^2 \,,
  \]
  where $c' = -c \prod_{0 \neq \theta \in \Theta} \theta = f'(\theta_1)$ and
  $h(x) = \prod_{0 \neq \theta \in \Theta} f_\theta^-(x)$.
  Choosing $\gamma \in k$ and $u \in \calO^\times$ such that $\gamma^2 = u c'$,
  we can now parameterize $D_{0,k} \stackrel{\isom}{\to} D' = \pi^{-1}(D)$ via
  \[ z \longmapsto \bigl(u z^2, \gamma z h(u z^2)\bigr) \,. \]
  If we use $u^{-1} x$ as the coordinate on~$D$,
  then $\pi \colon D' \to D$ is given by $z \mapsto z^2$. It is clear that
  the hyperelliptic involution is given in terms of~$z$ by $z \mapsto -z$.
  We remark that the condition `$r |f'(\theta_1)| = |\xi|^2$ for some $\xi \in k$'
  is invariant under coordinate transformations.

  In case~\eqref{Ldisk3}, we see in the same way as before that the set $\{\theta_1, \theta_2\}$
  is fixed by the action of the absolute Galois group of~$k$, which implies that
  the coefficients of $(x-\theta_1)(x-\theta_2)$ are in~$k$. We can then change
  coordinates so that $\theta_1 + \theta_2 = 0$ (and $D$ is still the open unit disk).
  Let $\theta_1 \theta_2 = a \in k^\times$ ($\theta_1$ and~$\theta_2$ must be nonzero,
  since $f$ does not have multiple roots).
  Set $\Theta' = \Theta \setminus \{\theta_1, \theta_2\}$; then the equation of~$C$
  on~$D$ can be written as
  \[ y^2 = c \prod_{\theta \in \Theta'} \theta \cdot (x^2 - a)
           \Bigl(\prod_{\theta \in \Theta'} f_\theta^-(x)\Bigr)^2
         = c' (x^2 - a) h(x)^2 \,,
  \]
  where $c' = c \prod_{\theta \in \Theta'} \theta$ and
  $h(x) = \prod_{\theta \in \Theta'} f_\theta^-(x)$.
  If $c'$ is not a square in~$k$, then there are no solutions when $|x| > |\theta_1| = |\theta_2|$,
  and $\pi^{-1}(D) \cap C(k)$ is contained in the preimage of $\{\xi : |\xi| \le |\theta_1|\}$.
  Otherwise, write $c' = \gamma^2$ with some $\gamma \in k^\times$. Taking
  \[ z = \frac{1}{2} \Bigl(x + \frac{y}{\gamma h(x)}\Bigr) \,, \]
  we can parameterize $\pi^{-1}(D)$ via
  \[ z \longmapsto \Bigl(z + \frac{a}{4z},
                         \gamma \cdot \Bigl(z - \frac{a}{4z}\Bigr)
                                   h\Bigl(z + \frac{a}{4z}\Bigr)\Bigr) \,.
  \]
  The condition $|z + a/(4z)| < 1$ translates into $|a| < |z| < 1$, which defines
  the annulus~$A$. The covering map to~$D$ is given by $z \mapsto z + (a/4)/z$.
  The involution $z \mapsto a/(4z)$ fixes the $x$-coordinate and changes the sign
  of the $y$-coordinate, so it is the hyperelliptic involution on~$A$.
\end{proof}

\begin{lemma} \label{L:annulus}
  Let $\varphi \colon A_{\alpha,k} \stackrel{\isom}{\to} A \subset \PP^1_k$
  be an open annulus with $A(\C_p) \cap \Theta = \emptyset$ and $A(k) \neq \emptyset$.
  The complement of~$A$ in~$\PP^1_k$ is a disjoint union of two closed disks,
  which partitions~$\Theta$ into two disjoint subsets $\Theta_0$ and~$\Theta_\infty$.
  This induces a factorization $f(x) = c f_0(x) f_\infty(x)$ with $f_0$ and~$f_\infty$
  monic such that the roots of~$f_0$ are the elements of~$\Theta_0$ and the roots
  of~$f_\infty$ are the elements of~$\Theta_\infty$.
  \begin{enumerate}[\upshape(1)]\addtolength{\itemsep}{1mm}
    \item \label{Lann1}
          If $\#\Theta_0$ and~$\#\Theta_\infty$ are odd, we assume in addition
          that $\alpha = |\beta_1|^2$ for some $\beta_1 \in k$ and that
          $r |c f_\infty(\xi)| = |\beta_2|^2$ for some $\xi \in k$ in the closed disk
          defining~$\Theta_0$ and some $\beta_2 \in k$, where $r$ is the outer (or inner) radius
          of~$A$ in terms of some coordinate on~$\PP^1$ such that $0, \infty \notin A$.
          Then $\pi^{-1}(A)$ is an annulus~$A'$ in~$C$. In terms of suitable coordinates
          on $A'$ and~$A$, $\pi \colon A' \to A$ is given by $z \mapsto z^2$
          and the hyperelliptic involution on~$A'$ is $z \mapsto -z$.
    \item \label{Lann2}
          If $\#\Theta_0$ and~$\#\Theta_\infty$ are even, then let $\xi \in A(k)$.
          If $c f_\infty(\xi)$ is not a square in~$k$, then $\pi^{-1}(A) \cap C(k)$
          is empty. Otherwise $\pi^{-1}(A)$ is a disjoint union of two annuli in~$C$,
          each isomorphic to~$A$ via~$\pi$.
  \end{enumerate}
\end{lemma}

\begin{proof}
  We can assume that $\varphi = \id$ and $A = A_{\alpha,k}$ and that $0 \notin \Theta$.
  We fix notations by setting $\Theta_0 = \{\theta \in \Theta : |\theta| \le \alpha\}$
  and $\Theta_\infty = \{\theta \in \Theta : |\theta| \ge 1\}$. Note that, in a similar
  way as in the proof of Lemma~\ref{L:disk}, the sets $\Theta_0$ and~$\Theta_\infty$
  are each fixed by the action of the absolute Galois group of~$k$. In particular,
  the product $\prod_{\theta \in \Theta_\infty} (-\theta)$ is in~$k$.
  We can then write the equation of~$C$ on~$A$ as
  \[ y^2 = c \prod_{\theta \in \Theta_\infty} (-\theta) \cdot x^{\#\Theta_0}
             \Bigl(\prod_{\theta \in \Theta_\infty} f_\theta^-(x)
                   \prod_{\theta \in \Theta_0} f_\theta^+(x)\Bigr)^2
         = c' x^{\#\Theta_0} h(x)^2 \,,
  \]
  where $c' = c \prod_{\theta \in \Theta_\infty} (-\theta) = (-1)^{\#\Theta_\infty} c f_\infty(0)$ and
  $h(x) = \prod_{\theta \in \Theta_\infty} f_\theta^-(x)
            \prod_{\theta \in \Theta_0} f_\theta^+(x)$.

  In case~\eqref{Lann1}, writing $u c' = \gamma^2$ with $\gamma \in k$ and $u \in \calO^\times$,
  we obtain the parameterization
  \[ A_{\sqrt{\alpha},k} \stackrel{\isom}{\To} A' = \pi^{-1}(A), \qquad
     z \longmapsto \bigl(u z^2,
                         \gamma u^{(\#\Theta_0-1)/2} z^{\#\Theta_0} h(u z^2)\bigr) \,,
  \]
  in a similar way as in case~\eqref{Ldisk2} of Lemma~\ref{L:disk}.
  The statements on~$\pi$ and on the hyperelliptic involution follow in the same
  way as there.
  We remark that the condition `$r |c f_\infty(\xi)| = |\beta_2|^2$ for some $\beta_2 \in k$'
  is (assuming that $\alpha = |\beta_1|^2$ for some $\beta_1 \in k$) invariant under
  coordinate transformations.

  In case~\eqref{Lann2} we have $x^{\#\Theta_0} h(x)^2 = \bigl(x^{\#\Theta_0/2} h(x)\bigr)^2$.
  This case is similar to case~\eqref{Ldisk1} of Lemma~\ref{L:disk}: there are no solutions
  in~$k$ unless $c' = \gamma^2$ is a square, and in the latter case, we have
  $\pi^{-1}(A) = A^+ \cup A^-$ with
  \[ A^\pm = \bigl\{(\xi, \pm \gamma \xi^{\#\Theta_0/2} h(\xi)) : \xi \in A\bigr\} \,. \qedhere \]
\end{proof}

We state some results on involutions of disks and annuli.
An \emph{analytic involution} on~$D_{0.k}$ or~$A_{\alpha,k}$ is an analytic automorphism~$\iota$
of order two. Recall that we assume the residue characteristic~$p$ to be odd.
We do not claim that the results below are original, but we were unable to find
a suitable reference.

\begin{lemma} \label{L:invol}
  Let $0 < \alpha < 1$ be of the form $\alpha = |\xi|$ for some $\xi \in k$.
  \begin{enumerate}[\upshape (1)]\addtolength{\itemsep}{1mm}
    \item \label{Linvol1}
          Let $\iota \colon D_{0,k} \to D_{0,k}$ be an analytic involution.
          Then $\iota$ has a unique fixed point in~$D_0(\C_p)$, which is in fact in~$D_0(k)$,
          and in terms of a suitable coordinate~$u$ on~$D_{0,k}$, $\iota$ is given by $u \mapsto -u$.
    \item \label{Linvol2}
          Let $\iota \colon A_{\alpha,k} \to A_{\alpha,k}$ be an analytic involution
          such that $|\iota(\xi)| = |\xi|$ for all $\xi \in A_\alpha(\C_p)$.
          Then $\iota$ is given in terms of a suitable coordinate~$u$ on~$A_{\alpha,k}$
          by $u \mapsto -u$. In particular, $\iota$~has no fixed points,
          and $A_\alpha/\langle \iota \rangle \isom A_{\alpha^2}$ is an annulus.
    \item \label{Linvol3}
          Let $\iota \colon A_{\alpha,k} \to A_{\alpha,k}$ be an analytic involution
          such that $|\iota(\xi)| = \alpha/|\xi|$ for all $\xi \in A_\alpha(\C_p)$.
          Then $\iota$ is given in terms of a suitable coordinate~$u$ on~$A_{\alpha,k}$
          by $u \mapsto a/u$ for some $a \in k$ with $|a| = \alpha$.
          In particular, $\iota$ has exactly two fixed points $u = \pm\sqrt{a}$ in~$A_\alpha(\C_p)$,
          $A_{\alpha,k}/\langle \iota \rangle \isom D_{0,k}$ is a disk,
          and the covering $A_{\alpha,k} \to D_{0,k}$ is branched above two points.
  \end{enumerate}
\end{lemma}

\begin{proof} \strut
  \begin{enumerate}[\upshape (1)]%
    \addtolength{\itemsep}{1mm}\setlength{\parindent}{0mm}\setlength{\parskip}{1ex plus 0.5ex}
    \item We first show that we can assume that $0$ is a fixed point of~$\iota$,
          possibly after a coordinate change.
          So assume otherwise. $\iota$ is then given by a power series
          $\sum_{n=0}^\infty a_n z^n \in \calO\pws{z}$
          whose constant term satisfies $0 < |a_0| < 1$.
          Since $\iota$ is an involution, we have
          $\iota(a_0) = 0$, which implies that $|a_0 + a_1 a_0| \le |a_0|^2$,
          so $|a_1 + 1| \le |a_0|$. This in turn implies that
          $z \mapsto (\iota(z) + z)/2$ is contracting on every sufficiently large closed disk
          contained in~$D_0(k)$ and so has a fixed point in~$D_0(k)$ by the Banach
          fixed point theorem. We can then shift the coordinate so that
          the fixed point is at the origin; we denote this coordinate by~$z$ again.

          We now have $\iota(0) = 0$. Then $\iota(z) = a_1 z + a_2 z^2 + \ldots$,
          and $\iota \circ \iota = \id$ implies that $a_1^2 = 1$. If $a_1 = 1$,
          then it follows that $\iota = \id$, which is excluded: assume otherwise, then
          $\iota(z) = z + \beta z^n + \ldots$ with $n > 1$ and $\beta \neq 0$,
          which leads to the contradiction
          \[ z = \iota(\iota(z)) = z + 2 \beta z^n + \ldots \,. \]
          So $a_1 = -1$ and $\iota(z) = -z (1 + h(z))$ with $h \colon D_{0,k} \to D_{0,k}$.
          Write $h_1(z) = 1 + h(z)$.
          The relation $\iota(\iota(z)) = z$ implies that $h_1(z) h_1(\iota(z)) = 1$.
          We set $u(z) = z (1 + h_1(z))/2$; then $u$ is a coordinate on~$D_{0,k}$, and
          \[ u(\iota(z)) = \iota(z) \frac{1 + h_1(\iota(z))}{2}
                         = -z \frac{h_1(z) + h_1(z) h(\iota(z))}{2}
                         = -z \frac{h_1(z) + 1}{2}
                         = -u(z) \,,
          \]
          so in terms of~$u$, $\iota$ acts as $u \mapsto -u$.
          In particular, $0$ is the only fixed point of~$\iota$ on~$D_0(\C_p)$.
    \item Since $|\iota(\xi)| = |\xi|$, $\iota$ is given by a Laurent series
          $z f_1(z)$ where $|f_1(\xi)| = 1$ for all $\xi \in A_\alpha(\C_p)$.
          Let $a_0$ be the constant term of~$f_1(z)$; then $|a_0| = 1$ and
          $|f_1(\xi) - a_0| < 1$ for all $\xi \in A_\alpha(\C_p)$.
          Writing $\lambda = a_0$ and $h(z) = f_1(z)/\lambda - 1$, we have
          $\iota(z) = \lambda z (1 +  h(z))$ with $h \colon A_{\alpha,k} \to D_{0,k}$.
          Since $\iota$ is an involution, we find that $|\lambda^2 - 1| < 1$.
          If $\lambda$ were close to~$1$, then $|f(\xi) - \xi| < |\xi|$,
          so $\iota$ would induce an involution of the open disk
          $\{|z-\xi| < |\xi|\} \subset A_\alpha$, for each $\xi \in A_\alpha(\C_p)$.
          By part~\eqref{Linvol1}, $\iota$ would have a fixed point in each
          of these disks, which is impossible, since there are infinitely many of them
          (even with fixed~$|\xi|$)
          and $\iota$ is not the identity. It follows that $|\lambda + 1| < 1$.
          This already implies that $\iota$ has no fixed points.

          Write $h_1(z) = 1 + h(z)$.
          Note that $\iota(\iota(z)) = z$ implies that $h_1(z) h_1(\iota(z)) = \lambda^{-2}$.
          Similarly as in part~\eqref{Linvol1} we set
          $u(z) = z (1 - \lambda h_1(z))/(1-\lambda)$; this is a coordinate on~$A_{\alpha,k}$.
          We check that
          \[ u(\iota(z)) = \iota(z) \frac{1 - \lambda h_1(\iota(z))}{1 - \lambda}
                         = \lambda z \frac{h_1(z) - \lambda h_1(z) h_1(\iota(z))}{1 - \lambda}
                         = z \frac{\lambda h_1(z) - 1}{1 - \lambda}
                         = -u(z)
          \]
          as before. The last claim is then clear.
    \item Here we have $\iota(z) = \frac{a}{z} (1 + h(z))$ for some $a \in k$ with $|a| = \alpha$
          and some analytic map $h \colon A_{\alpha,k} \to D_{0,k}$. Write $h_1(z) = 1 + h(z)$
          The fact that $\iota$ is an involution implies this time that $h_1(\iota(z)) = h_1(z)$.
          Set $u(z) = z h_1(z)^{-1/2}$. Then
          \[ u(\iota(z)) = \iota(z) h_1(\iota(z))^{-1/2}
                         = \frac{a}{z} h_1(z) h_1(z)^{-1/2}
                         = \frac{a}{z h_1(z)^{-1/2}}
                         = \frac{a}{u(z)} \,,
          \]
          so $u$ is a suitable coordinate. The fixed points are where $u^2 = a$;
          the map $A_{\alpha,k} \to D_{0,k}$, $u \mapsto u + a/u$, realizes the quotient
          by~$\langle \iota \rangle$. \qedhere
  \end{enumerate}
\end{proof}

Now we show that every (maximal) annulus in~$C$ arises as in Lemmas \ref{L:disk}
and~\ref{L:annulus}. A \emph{maximal annulus} in~$C$ is an annulus that is not
contained in a disk or in a strictly larger annulus in~$C$.

\begin{proposition} \label{P:no-others}
  Let $\varphi \colon A_{\alpha,k} \stackrel{\isom}{\to} A \subset C$ be a maximal annulus
  such that $A(k) \neq \emptyset$. Then $A$ is obtained from a disk
  or an annulus in~$\PP^1_{k}$ as in Lemma~\ref{L:disk},~\eqref{Ldisk3}, or
  Lemma~\ref{L:annulus}, \eqref{Lann1} or~\eqref{Lann2}. In the latter two cases,
  the two sets $\Theta_0$ and~$\Theta_\infty$ both have at least three elements.
\end{proposition}

\begin{proof}
  We consider the action of the hyperelliptic involution~$\iota$ on~$A$
  and its pullback $\varphi^* \iota$ to~$A_{\alpha,k}$.
  There are three possibilities.
  \begin{enumerate}[(1)]\addtolength{\itemsep}{1mm}
    \item $A \cap \iota(A) = \emptyset$. \\[1mm]
          Then clearly $\pi(A)$ is analytically
          isomorphic to~$A$, hence is an annulus in~$\PP^1_k$
          that does not contain any branch points of~$\pi$. We must then be
          in case~\eqref{Lann2} of Lemma~\ref{L:annulus}, since the preimage
          of~$\pi(A)$ splits into the two annuli $A$ and~$\iota(A)$.
          If $\Theta_0$ or~$\Theta_\infty$
          had zero or two elements, then we could `fill in' the annulus~$\pi(A)$ to
          obtain a disk containing zero or two branch points. Then $A$ would be contained in
          a disk or in a larger annulus by Lemma~\ref{L:disk}, \eqref{Ldisk1} or~\eqref{Ldisk3},
          a contradiction.
    \item $\iota(A) = A$ and $\iota$ preserves the orientation of the chain corresponding to~$A$.
          \\[1mm]
          Let $\xi \in A_\alpha(\C_p)$. There is a finite extension~$K$ of~$k$ such
          that $|\xi| = |\beta|$ for some $\beta \in K$. Since the minimal regular
          model~$\calC$ of~$C$ is semistable near the reduction of~$A$, the special fiber of
          the minimal regular model of~$C$ over~$\calO_K$ contains a chain corresponding
          to~$A_K$ obtained by successive blow-ups of intersection points of components
          of the chain in~$\calC_s$ with other components (within or outside the chain).
          There is one such component that corresponds to the points $\xi' \in A_\alpha(K)$
          with $|\xi'| = |\beta| = |\xi|$. Since $\iota$ preserves the orientation
          of the chain, it fixes every component; it follows that $|\varphi^*\iota(\xi)| = |\xi|$.
          By Lemma~\ref{L:invol},~\eqref{Linvol2},
          $\pi(A)$ is an annulus of height~$\alpha^2$ in~$\PP^1_k$ that does not contain
          any branch points. We must then be in case~\eqref{Lann1} of Lemma~\ref{L:annulus}, since
          $A \to \pi(A)$ is an unramified double cover.
          (The condition $r |c f_\infty(\xi)| = |\beta_2|^2$ is automatically satisfied,
          since $A$ is an annulus over~$k$.)
          If $\Theta_0$ or~$\Theta_\infty$ had only one element, then we could again
          `fill in' the annulus~$\pi(A)$ to obtain a disk containing exactly one branch point,
          so that $A$ would be contained in a disk in~$C$ by Lemma~\ref{L:disk},~\eqref{Ldisk2}.
    \item $\iota(A) = A$ and $\iota$ reverses the orientation of the chain corresponding to~$A$. \\[1mm]
          By a similar argument as in the preceding case, we see that
          $|\varphi^* \iota(\xi)| = \alpha/|\xi|$. By Lemma~\ref{L:invol},~\eqref{Linvol3},
          $\pi(A)$ is a disk in~$\PP^1_k$ that contains exactly two branch points.
          We must then be in case~\eqref{Ldisk3} of Lemma~\ref{L:disk}.
          \qedhere
  \end{enumerate}
\end{proof}

We give names to the three possible kinds of annuli.

\begin{definition}
  Let $A$ be a maximal annulus in~$C$. We call $A$
  \begin{enumerate}[(1)]\addtolength{\itemsep}{1mm}
    \item a \emph{branch annulus}, if $A$ is obtained as in Lemma~\ref{L:disk},~\eqref{Ldisk3};
    \item an \emph{odd annulus}, if $A$ is obtained
          as in Lemma~\ref{L:annulus},~\eqref{Lann1} (with $\#\Theta_0, \#\Theta_\infty \ge 3$);
    \item an \emph{even annulus}, if $A$ is obtained
          as in Lemma~\ref{L:annulus},~\eqref{Lann2} (with $\#\Theta_0, \#\Theta_\infty \ge 4$).
  \end{enumerate}
\end{definition}

Now we describe what the regular differentials of~$C$ look like on the various
types of annuli.

\begin{proposition} \label{P:differentials}
  Let $\varphi \colon A_{\alpha,k} \to C$ be an annulus in~$C$,
  in terms of a coordinate~$z$ as in the proofs of
  Lemma~\ref{L:disk},~\eqref{Ldisk3} and Lemma~\ref{L:annulus}.
  Then there is an analytic function $h \colon A_{\alpha,k} \to D_{0,k}$
  such that a basis of $\varphi^* \Omega^1_C(k)$ is given by
  \[ \Bigl(z + \frac{a}{4z}\Bigr)^\nu (1+h(z))\,\frac{dz}{z}\,, \qquad
      z^{2\nu+2-\#\Theta_0} (1+h(z))\,\frac{dz}{z}\,, \qquad
      z^{\nu+1-\#\Theta_0/2} (1+h(z))\,\frac{dz}{z}\,,
  \]
  for $\nu = 0, 1, 2, \ldots, g-1$, when $A$ is a branch, odd, or even annulus, respectively.
\end{proposition}

\begin{proof}
  This is an easy consequence of the parameterizations and the fact that
  $\Omega^1_C(k)$ is spanned by $x^\nu\,dx/y$ for $\nu = 0, 1, 2, \ldots, g-1$.
\end{proof}

\begin{corollary} \label{C:diffann}
  Let $\varphi \colon A_{\alpha,k} \to C$ be an annulus as before.
  There are numbers $n_1 < 0 < n_2$ with $n_2 - n_1 \le 2g-2$
  and an analytic map $h \colon A_{\alpha,k} \to D_{0,k}$ such that
  for every differential $\omega \in \Omega^1_C(k)$, we have
  \[ \varphi^* \omega = u(z) (1+h(z))\,\frac{dz}{z} \]
  where $u(z) \in k[z,z^{-1}]$ is a Laurent polynomial
  all of whose terms have exponents between $n_1$ and~$n_2$, inclusive.
\end{corollary}

\begin{proof}
  In Proposition~\ref{P:differentials}, we can take $n_1 = -(g-1)$, $n_2 = g-1$ in the branch case,
  $n_1 = 2-\#\Theta_0$, $n_2 = 2g-\#\Theta_0$ in the odd case, and
  $n_1 = 1-\#\Theta_0/2$, $n_2 = g-\#\Theta_0/2$ in the even case. Note that
  $3 \le \#\Theta_0 \le 2g-1$, which implies that $n_1 < 0 < n_2$.
\end{proof}

A bound like this for the `relevant' exponents is important to obtain uniform bounds.
We note that Katz, Rabinoff and Zureick-Brown~\cite{KRZB}*{Lemma~4.15}
prove the following statement that applies to arbitrary curves, but has a weaker conclusion.
It is this extension that allows them to generalize our results from
hyperelliptic to arbitrary curves.

\begin{proposition}[Katz, Rabinoff, Zureick-Brown] \label{P:diffKRZB}
  Let $k$ be a $p$-adic field ($p = 2$ is allowed here).
  Let $C$ be a curve over~$k$ of genus~$g \ge 2$ and let $\varphi \colon A_{\alpha,k} \to C$
  be an annulus. Then for every differential $\omega \in \Omega^1_C(k)$, we have
  \[ \varphi^* \omega = u(z) (1+h(z))\,\frac{dz}{z} \]
  where $h \colon A_{\alpha,k} \to D_{0,k}$ and $u(z) \in k[z,z^{-1}]$ is a Laurent polynomial
  all of whose terms have exponents between $-(2g-2)$ and~$2g-2$, inclusive.
\end{proposition}

It would be interesting to see whether the conclusion can be strengthened
as in Corollary~\ref{C:diffann}.


\section{The pull-back of an abelian logarithm to an annulus} \label{S:integ}

We fix a basepoint $P_0 \in C(k)$; this gives rise to the embedding
$i \colon C \to J$, $P \mapsto [P-P_0]$, defined over~$k$.
Let $\omega$ be a regular differential on~$C$ and denote by~$\omega_J$
the corresponding regular and invariant $1$-form on~$J$ (so that $\omega = i^* \omega_J$).
We write for $P \in C(k)$
\[ \lambda_\omega(P) = \oint_{P_0}^P \omega
                     = \oint_O^{[P-P_0]} \omega_J
                     = \langle \omega_J, \log_J {[P-P_0]} \rangle \,.
\]
If $\varphi \colon D_0 \to C$ is an open disk in~$C$, then
\[ \varphi^* \omega = w(z)\,dz \]
with an analytic regular function $w(z)$ on~$D_0$. Let $\ell$ be a power
series whose derivative is~$w$. Then it is well-known that for
$\xi_0, \xi_1 \in D_0(k)$ we have
\[ \oint_{\varphi(\xi_0)}^{\varphi(\xi_1)} \omega
     = \int_{\xi_0}^{\xi_1} w(z)\,dz
     = \ell(\xi_1) - \ell(\xi_0) \,.
\]
Using Newton polygons, one then shows (see for example~\cite{Stoll2006-chabauty}*{Section~6})
that the number of zeros of~$\lambda_\omega$ on~$\varphi(D_0(k))$
(or even $\varphi(D_0(k^\unr))$)
is bounded by $1$ plus the number~$n$ of zeros of~$\omega$ (counted with multiplicity)
on~$\varphi(D_0(\bar{k}))$ plus a term,
denoted by $\delta(v,n)$ in~\cite{Stoll2006-chabauty},
that depends only on~$n$, $p$ and the ramification
index~$e$ of~$k$. We write $\Delta_k(s,r)$ for what is denoted~$\Delta_v(s,r)$
in~\cite{Stoll2006-chabauty}, namely
\[ \Delta_k(s,r) = \max\Bigl\{\sum_{j=1}^s \delta(v,m_j)
                                : m_j \ge 0, \sum_{j=1}^s m_j \le r\Bigr\} \,.
\]
Recall that $e$ denotes the ramification index of~$k$.
If $p > e+1$, then we set
\[ \mu = 1 + \frac{e}{p-e-1} = \frac{p-1}{p-e-1}\,; \]
note that $1 < \mu \le e+1$. By~\cite{Stoll2006-chabauty}*{Lemma~6.2},
we have $\Delta_k(s, n) \le e \lfloor n/(p-e-1) \rfloor$ in this case,
so that $n + \Delta_k(s, n) \le \mu n$.
We have the following bound.

\begin{lemma} \label{L:points-D}
  Let $V \neq 0$ be a linear subspace of codimension~$r$ of the space of regular
  differentials on~$C$ and let $N_D$ denote the number of disks whose union is~$C_D(k)$.
  Then the functions~$\lambda_\omega$ for $\omega \in V$ have at most
  \[ N_D + 2r + \Delta_k(N_D, 2r) \]
  common zeros in~$C_D(k)$. If $p > e + 1$, then we can take the bound to be
  \[ N_D + 2 \mu r \le (5q+2) (g-1) - 3 q (t-1) + 2 \mu r \,. \]
\end{lemma}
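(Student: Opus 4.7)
The plan is a disk-by-disk reduction: on each residue disk in $C_D(k)$, replace the common-zero count of $\{\lambda_\omega : \omega \in V\}$ by the zero count of a single well-chosen $\lambda_{\omega_i}$, then sum, bounding the aggregate vanishing of the~$\omega_i$ by~$2r$ via the codimension of~$V$ together with Riemann-Roch and Clifford's theorem.

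First, for each residue disk~$D_i$ with reduction point $P_i \in \calC_s^\smooth(\bar\kappa)$, I would choose a nonzero $\omega_i \in V$ whose order of vanishing $\alpha_i \colonequals \mathrm{ord}_{P_i}(\omega_i)$ is minimal among elements of~$V$, scaled to lie in the integral structure with unit leading Taylor coefficient at~$P_i$. A Newton-polygon inspection on a parametrization $\varphi_i \colon D_0 \to C$ then shows that the number $n_i$ of zeros of~$\omega_i$ on $D_i(\bar k)$ counted with multiplicity equals exactly~$\alpha_i$. Since any common zero of the $\lambda_\omega$ on $D_i(k)$ is in particular a zero of~$\lambda_{\omega_i}$, the estimate recalled just before the lemma (from \cite{Stoll2006-chabauty}*{Section~6}) gives at most $1 + n_i + \delta(v, n_i)$ such zeros in $D_i(k^\unr)$, hence in $D_i(k)$.

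The crux is the inequality $\sum_{i=1}^{N_D} \alpha_i \le 2r$. Setting $D = \sum_i \alpha_i P_i$, the minimality of each~$\alpha_i$ in~$V$ forces $V \subset H^0(C, \Omega^1_C(-D))$, so comparing codimensions in $H^0(C, \Omega^1_C)$ via Riemann-Roch yields $r \ge 1 + \deg D - h^0(D)$. Combined with Clifford's theorem $h^0(D) \le (\deg D)/2 + 1$ (the non-special case being even easier, since then $h^0(D) = 1$ already gives $\sum_i \alpha_i \le r$), this yields $\sum_i \alpha_i = \deg D \le 2r$.

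Summing the per-disk bound,
\[ \sum_{i=1}^{N_D} \bigl(1 + n_i + \delta(v, n_i)\bigr) = N_D + \sum_i n_i + \sum_i \delta(v, n_i) \le N_D + 2r + \Delta_k(N_D, 2r), \]
the final inequality holding by the very definition of $\Delta_k(N_D, 2r)$. For the sharper statement when $p > e+1$, I would use $\delta(v, n) \le e\lfloor n/(p-e-1)\rfloor$ from~\cite{Stoll2006-chabauty} together with the elementary $\sum_j \lfloor m_j/(p-e-1)\rfloor \le \lfloor(\sum_j m_j)/(p-e-1)\rfloor$ to conclude $\Delta_k(N_D, 2r) \le e\lfloor 2r/(p-e-1)\rfloor$; substituting $N_D \le (5q+2)(g-1) - 3q(t-1)$ from Proposition~\ref{P:DAbounds} then gives the final explicit estimate. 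The main obstacle is the Riemann-Roch/Clifford dimension count yielding $\sum_i \alpha_i \le 2r$, together with the integral-structure bookkeeping needed so that $n_i = \alpha_i$; once that is in place, the rest is routine summation.
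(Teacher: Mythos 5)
Your outline is the right one in the case of good reduction, where it reproduces the argument of \cite{Stoll2006-chabauty}*{Theorem~6.6} --- which is exactly what the paper's (one-sentence) proof cites. The per-disk count $1+n_i+\delta(v,n_i)$, the summation, and the estimate $\Delta_k(N_D,2r)\le e\lfloor 2r/(p-e-1)\rfloor$ for $p>e+1$ are all fine (your claim that $n_i$ \emph{equals} $\alpha_i$ should be ``is at most $\alpha_i$'', but only the upper bound is used, and $\delta(v,\cdot)$ is monotone).

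The gap is in the crux step $\sum_i\alpha_i\le 2r$. Your divisor $D=\sum_i\alpha_i P_i$ lives on the special fiber $\calC_s$ of the \emph{minimal regular model}, and in the situations where this lemma is actually needed $\calC_s$ is reducible, with components of higher multiplicity: controlling exactly such special fibers is the point of Section~\ref{S:AG}. Riemann--Roch in the form $h^0(K-D)=h^0(D)-\deg D+g-1$ and Clifford's inequality $h^0(D)\le \tfrac12\deg D+1$ are statements about a smooth irreducible curve. Moreover, the reduction of $V$ to $\calC_s$ consists of sections of the relative dualizing sheaf rather than of $\Omega^1$, a given $\omega\in V$ may reduce to zero on the component carrying $P_i$ while remaining nonzero elsewhere, and the codimension~$r$ of $V$ is not preserved component by component; so neither the inclusion $\bar V\subset H^0(\omega_{\calC_s}(-D))$ nor the dimension count can be used naively. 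Establishing $\sum_i\alpha_i\le 2r$ at a prime of bad reduction is precisely the content of \cite{Katz-Zureick-Brown}*{Theorem~4.4} (the ``Clifford bounds for geometric rank functions'' of that title), which is the second citation in the paper's proof and a nontrivial result in its own right. As written, your proposal proves the lemma only under a good-reduction hypothesis; for the general statement you must either invoke that result or supply a Clifford-type bound for linear series on the reducible special fiber.
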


\begin{proof}
  This is essentially \cite{Stoll2006-chabauty}*{Theorem~6.6},
  using~\cite{Katz-Zureick-Brown}*{Theorem~4.4} in the case of bad reduction.
  In the case $p > e+1$ we use the bound stated above;
  the bound for~$N_D$ comes from Proposition~\ref{P:DAbounds}.
\end{proof}

Now we consider the situation for an
annulus~$\varphi \colon A_{\alpha,k} \stackrel{\isom}{\to} A \subset C$.
Pulling back~$\omega$, we obtain
\[ \varphi^* \omega = w(z)\,dz = d \ell(z) + c(\omega) \frac{dz}{z} \]
for analytic regular functions $w$ and~$\ell$ on~$A_{\alpha,k}$ and some constant
$c(\omega) \in k$. Let $\Log_0$ denote the branch of the $p$-adic logarithm
that takes the value~$0$ at~$p$. Then, given this choice,
we can define a $p$-adic integral on~$A_\alpha$ by
\[ \int_{\xi_0}^{\xi_1} \varphi^* \omega
     = \int_{\xi_0}^{\xi_1} w(z)\,dz
     \colonequals \bigl(\ell(\xi_1) + c(\omega) \Log_0(\xi_1)\bigr)
                    - \bigl(\ell(\xi_0) + c(\omega) \Log_0(\xi_0)\bigr) \,.
\]
We want to compare this with
\[ \oint_{\varphi(\xi_0)}^{\varphi(\xi_1)} \omega \,. \]
Perhaps surprisingly, these two integrals can differ.

\begin{remark}
  There is in fact a unique $p$-adic integration theory in a suitable sense
  that is functorial and satisfies $\int_1^\xi dz/z = \Log_0(\xi)$ on
  any annulus containing $1$ and~$\xi$~\cite{Berkovich2007}.
  It is called the \emph{Berkovich-Coleman integral} in~\cite{KRZB} to
  distinguish it from the \emph{Abelian integral} that we denote $\oint$ in this paper.
\end{remark}

The following result is crucial. It was first suggested by numerical computations
and appears to be new. Recall that $v \colon k^\times \to \Q$ denotes the
valuation on~$k$, normalized so that $v(p) = 1$.

\begin{proposition} \label{P:integration-A}
  Let $\omega$ and $\varphi \colon A_{\alpha,k} \to C$ be as above, and write
  \[ \varphi^* \omega = d \ell(z) + c(\omega) \frac{dz}{z} \,. \]
  Then there is a constant~$a(\omega) \in k$ depending
  linearly on~$\omega$ such that for $\xi_0, \xi_1 \in A_{\alpha}(k)$ we have
  \begin{align*}
    \oint_{\varphi(\xi_0)}^{\varphi(\xi_1)} \omega
      &= \bigl(\ell(\xi_1) + c(\omega) \Log_0(\xi_1) + a(\omega) v(\xi_1)\bigr)
           - \bigl(\ell(\xi_0) + c(\omega) \Log_0(\xi_0) + a(\omega) v(\xi_0)\bigr) \\
      &= \int_{\xi_0}^{\xi_1} \varphi^* \omega
            + a(\omega) \bigl(v(\xi_1) - v(\xi_0)\bigr) \,.
  \end{align*}
\end{proposition}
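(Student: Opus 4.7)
The plan is to study the difference
\[
F(\xi_0, \xi_1) := \oint_{\varphi(\xi_0)}^{\varphi(\xi_1)} \omega
     - \int_{\xi_0}^{\xi_1} \varphi^* \omega
\]
and show that it equals $a(\omega)(v(\xi_1) - v(\xi_0))$ for some $k$-linear map $\omega \mapsto a(\omega)$. Both $\oint$ and $\int$ are linear in $\omega$ and additive under path concatenation, so $F$ inherits these properties; linearity of $a(\omega)$ in $\omega$ will follow automatically.

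Step~1 (Local vanishing on residue sub-disks). I would first show $F(\xi_0,\xi_1)=0$ whenever $v(\xi_1-\xi_0) > v(\xi_0)$, so that $\xi_0$ and $\xi_1$ lie in a common residue sub-disk of $A$ (with respect to some proper regular model of $C$ refining the minimal one). Under this hypothesis $\varphi(\xi_0)$ and $\varphi(\xi_1)$ specialize to the same smooth point, so $[\varphi(\xi_1)-\varphi(\xi_0)]$ lies in the formal group $\widehat{J}$ of the N\'eron model, where $\log_J$ coincides with the formal logarithm. Hence the abelian integral equals the formal $p$-adic integral of $\omega_J$, which pulls back via $\varphi$ to the naive power-series integral of $\varphi^*\omega$ on the sub-disk. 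An elementary calculation, expanding $dz/z$ around $\xi_0$ as a power series, shows that this naive integral agrees with the annular Coleman formula $\ell(\xi) + c(\omega)\Log_0 \xi$ as a difference, since $\Log_0(\xi_1/\xi_0)$ equals the power-series logarithm of $\xi_1/\xi_0$ when $\xi_1/\xi_0 \in 1 + \mathfrak{m}$.

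Step~2 (From local to linear). The essential point is that both $\oint$ and $\int$ arise as primitives of $\varphi^*\omega$ on $A$, differing only through the branch of the $p$-adic logarithm used to absorb the logarithmic divergence of $c(\omega)\,dz/z$. The Coleman formula employs the fixed branch $\Log_0$ with $\Log_0 p = 0$, whereas the abelian integral implicitly uses the branch dictated by the rigid-analytic uniformization $\widetilde{J}/\Lambda \cong J^{\mathrm{rig}}$ (obtained after passing to an unramified extension realizing semistable reduction of $J$): the branch for which the corresponding homomorphism $k^\times \to k$, read off from the torus factor that the annulus maps into, vanishes on the period lattice $\Lambda$. Since any two branches of the $p$-adic logarithm differ by $\Log_\alpha x - \Log_0 x = \alpha\,v(x)$ for some $\alpha \in k$, combining this with Step~1 forces
\[
F(\xi_0,\xi_1) = c(\omega)\,\alpha\,\bigl(v(\xi_1)-v(\xi_0)\bigr),
\]
so $a(\omega) := c(\omega)\alpha$ is $k$-linear in $\omega$.

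The main obstacle will be making Step~2 rigorous: one must identify the branch used by the abelian integral via the Raynaud uniformization, track the $\Lambda$-periods through the composition $A \to C \to J$, and verify that the scalar $\alpha$ is precisely the period ratio governed by the combinatorics of the chain in $\calC_s$ producing $A$. As a sanity check, on the Tate curve $E = \Gm/q^{\Z}$ with $\omega = du/u$, one has $c(\omega)=1$ and a direct computation of $\log_E$ yields $a(\omega) = -\Log_0(q)/v(q)$, matching the general period-ratio picture.
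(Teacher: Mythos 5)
Your Step~1 is sound and is precisely the local input the paper's proof also relies on: when $\xi_1/\xi_0\in 1+\mathfrak{m}$ the class $[\varphi(\xi_1)-\varphi(\xi_0)]$ lies in the kernel of reduction, where $\log_J$ is the formal logarithm, so the two integrals agree. The fatal problem is Step~2. You assert that the abelian integral pulled back to $A$ is just a Coleman primitive of $\varphi^*\omega$ taken with a different branch of the $p$-adic logarithm, and you conclude $a(\omega)=c(\omega)\alpha$. This is false, and the paper explicitly records a counterexample in the remark immediately following the proposition: one can have $c(\omega)=0$ and $a(\omega)\neq 0$. To see why your heuristic breaks, use the Bosch--L\"utkebohmert decomposition $i\circ\varphi=\psi_1\circ j+\psi_2$ with $\psi_1\colon\Gm\to J$ a rigid-analytic homomorphism and $\psi_2$ landing in the formal fibre $U$ of~$O$; the discrepancy is $\delta(\xi)=\oint_O^{\psi_1(\xi)}\omega_J-c(\omega)\Log_0(\xi)$. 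Since $\xi\mapsto\log_J(\psi_1(\xi))$ and $\xi\mapsto\Log_0(\xi)\tau$ (where $\tau$ is the tangent vector of $\psi_1$ at~$1$) are homomorphisms $k^\times\to k^g$ agreeing on $\calO^\times$, their difference factors through the valuation, so $\log_J(\psi_1(\xi))=\Log_0(\xi)\,\tau+v(\xi)\,\sigma$ for some fixed $\sigma\in k^g$. Hence $a(\omega)=\langle\omega_J,\sigma\rangle$, and there is no reason for $\sigma$ to be parallel to~$\tau$: $a(\omega)$ is a linear functional of $\omega$ genuinely independent of $c(\omega)=\langle\omega_J,\tau\rangle$. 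Your Tate-curve sanity check passes only because in dimension one every vector is parallel to~$\tau$.

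This is not cosmetic: if $a(\omega)$ were a multiple of $c(\omega)$, then ``goodness'' on an annulus would impose one linear condition instead of two, and the main theorem would hold for $r\le g-2$ rather than $r\le g-3$. The correct completion of your argument is essentially what you almost wrote before invoking the branch-of-logarithm picture: fix $\xi_0=1\in A$, note that $\delta(\xi)$ is a difference of two homomorphisms $k^\times\to k$ that agree on the residue disk of~$1$ (your Step~1), hence on all of $\calO^\times$ because $\calO^\times$ modulo the residue disk of~$1$ is torsion while $k$ is torsion-free, hence $\delta$ factors through $v$ and equals $a(\omega)v(\xi)$ --- with $a(\omega)$ left as an unconstrained linear functional of $\omega$, not identified with $c(\omega)\alpha$.
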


\begin{proof}
  Let $\xi_0 \in A_\alpha(k)$.
  Let $i \colon C \to J$ be the embedding sending $\varphi(\xi_0)$ to~$O$.
  According to~\cite{BoschLuetkebohmert-stabredII}*{Proposition~6.3}, the
  analytic map $i \circ \varphi \colon A_{\alpha,k} \to J$ can be written
  uniquely as
  \[ i(\varphi(z)) = \psi_1(\xi_0^{-1} j(z)) + \psi_2(z)\,, \]
  where $j \colon A_{\alpha,k} \to \Gmk$ is the natural inclusion,
  $\psi_1 \colon \Gmk \to J$ is an analytic group homomorphism and
  $\psi_2 \colon A_{\alpha,k} \to U$ is an analytic map, where $U$ denotes the
  formal fiber of the origin on~$J$ (so that $U(k)$ is the subgroup of
  points reducing to the origin). We write $\omega_J$ for the regular $1$-form on~$J$
  such that $i^* \omega_J = \omega$; $\omega_J$ is translation invariant.
  On~$U$, $\omega_J$ is exact, so $\omega_J = d\lambda$ for some analytic
  function~$\lambda$ on~$U$; we can assume $\lambda(O) = 0$.
  The pull-back $\psi_1^* \omega_J$ is a translation
  invariant differential on~$\Gmk$, so it has the form $c\,dz/z$ for some $c \in k$;
  the (multiplicative) translation by~$\xi_0^{-1}$ does not change it.
  The pull-back $\psi_2^* \omega_J$ is $\psi_2^* d\lambda = d(\lambda \circ \psi_2)$.
  Since
  \[ c(\omega)\,\frac{dz}{z} + d\ell(z)
       = \varphi^* \omega
       = \varphi^* i^* \omega_J
       = \psi_1^* \omega_J + \psi_2^* \omega_J
       = c \frac{dz}{z} + d \lambda\bigl(\psi_2(z)\bigr) \,,
  \]
  we see that $\ell(z) = \lambda(\psi_2(z))$ (up to an additive constant) and $c = c(\omega)$.
  Let $\xi_{1} \in A_{\alpha}(k)$. We obtain on the one side that
  \begin{align*}
    \oint_{\varphi(\xi_0)}^{\varphi(\xi_{1})} \omega
      &= \oint_O^{i(\varphi(\xi_{1}))} \omega_J
       = \oint_O^{\psi_1(\xi_0^{-1} \xi_{1})+\psi_2(\xi_{1})} \omega_J \\
      &= \oint_O^{\psi_1(\xi_0^{-1} \xi_{1})} \omega_J
           + \oint_O^{\psi_2(\xi_{1})} d\lambda
       = \oint_O^{\psi_1(\xi_{1}/\xi_0)} \omega_J + \lambda\bigl(\psi_2(\xi_{1})\bigr)
  \end{align*}
  and on the other side that
  \begin{align*}
    \int_{\xi_0}^{\xi_{1}} \varphi^* \omega
      &= \int_{\xi_0}^{\xi_{1}} \Bigl(d\ell(z) + c\frac{dz}{z}\Bigr)
       = \ell(\xi_{1}) - \ell(\xi_0)
          + c \bigl(\Log_0(\xi_{1}) - \Log_0(\xi_0)\bigr) \\
      &= \lambda\bigl(\psi_2(\xi_{1})\bigr) + c \Log_0(\xi_{1}/\xi_0)\,.
  \end{align*}
  Here we use that $\lambda(\psi_2(\xi_0)) = \lambda(O) = 0$.
  So the difference is
  \[ \delta(\xi_{1}/\xi_0)
        \colonequals \oint_{\varphi(\xi_0)}^{\varphi(\xi_{1})} \omega
             - \int_{\xi_0}^{\xi_{1}} \varphi^* \omega
        = \oint_O^{\psi_1(\xi_{1}/\xi_0)} \omega_J - c \Log_0(\xi_{1}/\xi_0) \,.
  \]
  Since $\psi_1$ is a group homomorphism, the first term in the last difference
  is a homomorphism $k^\times \to k$; the same is true for the second term.
  Both terms in the first difference agree on the residue disk~$U_1$ of~$1$,
  since they are given by the same formal integral on~$U_1$.
  Since $\calO^\times/U_1$
  is torsion and the target group~$k$ is torsion-free, we have $\delta = 0$
  on~$\calO^\times$.
  This implies that $\delta(\xi)$ is a linear function of the valuation~$v(\xi)$, so there
  is $a = a(\omega) \in k$ such that $\delta(\xi) = a v(\xi)$.

  That $a(\omega)$ is linear in~$\omega$ is clear, since $\ell$ (if we set $\ell_0 = 0$),
  $c(\omega)$ and the left-hand side are.
\end{proof}

\begin{remark}
  The numerical example mentioned above shows that it is possible to have
  $a(\omega) \neq 0$ and $c(\omega) = 0$, so that the appearance of~$a(\omega)$
  cannot in all cases be avoided by choosing a suitable branch of the $p$-adic
  logarithm.
  In this situation we have $\psi_1^* \omega_J = 0$ and the difference term above
  is given by $\oint_O^{\psi_1(\xi_{1}/\xi_0)} \omega_J$.
  Even though the pull-back of~$\omega_J$
  along~$\psi_1$ vanishes, it does not follow that the abelian integral vanishes
  on the image of~$\psi_1$. Consider for example $\xi_{1}/\xi_0 = p$
  and $P = \psi_1(p) \in J(k)$.
  There is a positive integer $n$ such that $n P \in U$; then
  \[ \oint_O^{\psi_1(p)} \omega_J = \frac{1}{n} \oint_O^{nP} \omega_J
                                  = \frac{1}{n} \lambda_{\omega_J}(nP) \,.
  \]
  There is no reason to assume that $\log_J (nP)$ is parallel to the derivative
  of~$\psi_1$ at~$1$, so $\psi_1^* \omega_J = 0$ does not in general imply
  that $\lambda_{\omega_J}(nP)$ vanishes.
\end{remark}

\begin{remark}
  Katz, Rabinoff and Zureick-Brown~\cite{KRZB} generalize Proposition~\ref{P:integration-A}
  to a comparison of the abelian integral and the Berkovich-Coleman integral on more general
  `wide open' subsets of (the Berkovich analytic space associated to)~$C$.
\end{remark}

We say that $\omega$ is \emph{good} for the annulus $\varphi \colon A_{\alpha,k} \to C$
if both $c(\omega)$ and~$a(\omega)$ in Proposition~\ref{P:integration-A} vanish.
This is a linear condition on~$\omega$ of codimension at most two.

\begin{lemma} \label{L:technical}
  In the situation of Proposition~\ref{P:integration-A} assume that
  $C$ is hyperelliptic and $p$ is odd. Let $V \neq 0$ be a linear subspace
  of codimension~$r \ge 1$ of the space of regular differentials on~$C$.
  Then there exists $0 \neq \omega \in V$ such that
  $\varphi^* \omega = u(z) (1+h(z))\,dz/z$ with an analytic map $h \colon A_{\alpha,k} \to D_{0,k}$
  and a Laurent polynomial~$u$ such that the terms in~$u$
  have exponents between $n_1$ and~$n_2$ (inclusive), where $n_1 \le 0 \le n_2$
  and $n_2 - n_1 \le 2r$ if the annulus is branch or odd, and $n_2 - n_1 \le r$
  if the annulus is even.
\end{lemma}

\begin{proof}
  This follows from Proposition~\ref{P:differentials}.

  In the branch case,
  \[ \varphi^* \omega
      = \sum_{\nu=0}^{g-1} a_\nu \Bigl(z + \frac{a}{4z}\Bigr)^\nu (1 + h(z))\,\frac{dz}{z} \,.
  \]
  Since $V$ has codimension~$r$, we can impose
  $g-1-r$ linear conditions, which we can take to be the vanishing of the
  coefficients $a_{r+1}, a_{r+2}, \ldots, a_{g-1}$. Then the claim holds
  with $n_1 = -r$, $n_2 = r$.

  In the odd case,
  \[ \varphi^* \omega = \sum_{\nu=0}^{g-1} a_\nu z^{2\nu+2-\#\Theta_0} (1 + h(z))\,\frac{dz}{z} \,. \]
  Here we impose the vanishing of $g-1-r$ coefficients~$a_\nu$ with $\nu$ small
  and/or large, so that the remaining coefficients form a contiguous sequence of
  odd integers containing negative as well as positive numbers.
  The difference of the largest and the smallest remaining exponent is then~$2r$.

  In the even case,
  \[ \varphi^* \omega = \sum_{\nu=0}^{g-1} a_\nu z^{\nu+1-\#\Theta_0/2} (1 + h(z))\,\frac{dz}{z} \,. \]
  We proceed in the same way as in the odd case, leaving a contiguous range of
  exponents containing~$0$ and of length~$r$.
\end{proof}

Recall that we fix some $P_0 \in C(k)$ and set
\[ \lambda_\omega \colon C(\C_p) \To \C_p, \qquad P \longmapsto \oint_{P_0}^P \omega \,. \]

\begin{proposition} \label{P:good-A}
  In the situation of Proposition~\ref{P:integration-A} assume that
  $V \neq 0$ is a linear subspace of the space of regular differentials on~$C$
  of codimension~$r \ge 1$ and such that all elements of~$V$ are good.
  Assume further that $C$ is hyperelliptic and that $p$ is odd.
  Then the number of common zeros on $\varphi(A_\alpha(k^\unr))$
  of the~$\lambda_\omega$ for all $\omega \in V$
  is bounded by a number~$B_A(p,e,r)$ that depends only on~$r$, $p$ and the
  ramification index~$e$ of~$k$.

  If $p > e + 1$, then we can take
  $B_A(p,e,r) = 2 \mu r$. If the annulus is even
  and $r \ge 2$, we can replace this by $\mu r$,
  so that we get the bound $2 \mu r$ for the union
  of the annulus and its image under the hyperelliptic involution.
\end{proposition}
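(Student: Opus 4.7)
The plan is to use Proposition~\ref{P:integration-A}: since every $\omega \in V$ is good, we have $c(\omega) = a(\omega) = 0$, so
\[ \lambda_\omega(\varphi(\xi)) - \ell_\omega(\xi) \]
is constant on $A$, where $\ell_\omega$ is the Laurent series satisfying $d\ell_\omega = \varphi^*\omega$. Hence the common zeros on $\varphi(A(k^\unr))$ of the family $\{\lambda_\omega : \omega \in V\}$ correspond to common zeros on $A(k^\unr)$ of Laurent series of the form $\ell_\omega + c_\omega$ with constants $c_\omega \in k$; these are contained in the zero set of any single such Laurent series. The task therefore reduces to (i) finding $\omega^* \in V \setminus \{0\}$ whose associated Laurent series has few zeros on $A(k^\unr)$, and (ii) bounding those zeros by a Newton polygon estimate.

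For (ii), I would adapt the $p$-adic Newton polygon analysis of~\cite{Stoll2006-chabauty}*{Section~6} to the Laurent-series setting on the annulus. If the Newton polygon of $\varphi^*\omega^* = d\ell_{\omega^*}$ has width at most $m$ over the slope range $(-\rho_2,-\rho_1)$ (equivalently, $\omega^*$ has at most $m$ zeros on $A(\bar{k})$), then integrating term by term and tracking the denominators $1/(n+1)$ shows that $\ell_{\omega^*} + c_{\omega^*}$ has at most $m + e\lfloor m/(p-e-1)\rfloor$ zeros in $A(k^\unr)$ when $p > e+1$, and more generally at most $m + \delta_k(m)$ zeros. Taking $m = 2r$ yields the explicit bound $B_A(p,e,r) = 2r + e\lfloor 2r/(p-e-1)\rfloor$.

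For (i), I would exploit the hyperelliptic involution $\iota$ of $C$, which acts on $\calC$ and so permutes the chains of~$\calC_s$. Either $\iota$ preserves the chain defining $A$ and acts on $A$ via $\xi \mapsto \alpha/\xi$ for some $\alpha \in k$ with $v(\alpha) = \rho_1 + \rho_2$, or it interchanges this chain with a second chain, giving a twin annulus $A'$ that is canonically identified with $A$ via $\iota$. Since regular differentials are anti-invariant under $\iota$, the Laurent coefficients of $\varphi^*\omega$ satisfy a symmetry pairing indices at opposite ends of the annulus. Combined with the codimension-$r$ hypothesis on $V$, this permits a linear-algebra count producing an $\omega^* \in V$ whose Laurent expansion on $A$ has Newton polygon width at most $2r$ over $(-\rho_2,-\rho_1)$. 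The main obstacle is making this codimension count precise: one must show that each of the $r$ conditions cutting out $V$ trims the effective window of the Newton polygon by at most $2$, with the factor of $2$ reflecting the $\iota$-pairing of Laurent coefficients. The oddness of $p$ should enter here, ensuring that the quotient of $A$ by the $\iota$-action is a well-behaved disk to which one can pass.
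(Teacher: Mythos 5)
Your overall architecture matches the paper's: goodness of every $\omega\in V$ kills both $c(\omega)$ and $a(\omega)$, so $\lambda_\omega\circ\varphi=\ell_\omega+\mathrm{const}$, and one then reduces to exhibiting a single $0\neq\omega^*\in V$ with $\varphi^*\omega^*=u(z)h(z)\,dz$, $|h-1|<1$ on~$A$ and $u$ a Laurent polynomial of narrow support, followed by a Newton-polygon count. That is exactly how the paper argues, with the narrow-support statement delegated to Lemma~\ref{L:hyp-diff}. Two details of your part~(ii) deserve care: the support of $u$ must straddle $-1$ strictly (i.e.\ $n_1<-1<n_2$); this is what lets the constant of integration land at exponent $0\in[n_1+1,n_2+1]$ without widening the Newton polygon, and is why the bound is $2r+e\lfloor 2r/(p-e-1)\rfloor$ with no extra ``$+1$'' as in the disk case. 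Without that condition your count would be off by one.

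The genuine gap is in part~(i). Anti-invariance of regular differentials under the hyperelliptic involution~$\iota$ cannot be the engine of the argument: in the \emph{even} case $\iota$ interchanges $A$ with a disjoint twin annulus, so it imposes no symmetry whatsoever on $\varphi^*\omega|_A$, yet the conclusion still holds there. Even in the odd and Weierstrass cases, the symmetry only tells you that the exponents of $\varphi^*\omega$ are even (odd case) or paired via $n\leftrightarrow -n-2$ (Weierstrass case); it does not bound the \emph{width} of the support. What is actually needed --- and what Lemma~\ref{L:hyp-diff} supplies by explicit computation --- is that the full $g$-dimensional space $\Omega^1_C=\{\tilde u(x)\,dx/2y:\deg\tilde u\le g-1\}$ pulls back to $A$, modulo a common unit Laurent series, into the span of $g$ monomials lying in an arithmetic progression of total length $2(g-1)$ (resp.\ $g-1$, $2(g-1)$ in the three cases). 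This rests on writing $f(x)=\gamma x^{\#\Theta_0}\tilde h(x)^2$ (or $\gamma(x^2-a)\tilde h(x)^2$) on the image annulus/disk in~$\PP^1$, so that $y$ is an explicit monomial (or $t-a/t$) times a unit; only then does imposing $\dim V-1=g-r-1$ further linear conditions inside~$V$ concentrate $\tilde u$ on $r+1$ consecutive coefficients and yield width $\le 2r$ straddling $-1$. The hypothesis that $p$ is odd enters precisely in this factorization: the binomial series for $(1+x)^{1/2}$ converges on $|x|<1$ only for $p$ odd (for $p=2$ one must shrink to a core annulus, cf.\ Lemma~\ref{L:core1}), not in ensuring that $A/\iota$ is a disk. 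As written, your linear-algebra count has no concrete basis to run on, and the direction of the count (``each condition trims the window'') is inverted: it is the remaining $g-r$ dimensions of~$V$ that let you shrink the window to width~$r$ in the $\tilde u$-variable.
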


\begin{proof}
  By Lemma~\ref{L:technical}, there is $0 \neq \omega \in V$
  such that $\varphi^* \omega = u(z) (1 + h(z))\,dz/z$ with an analytic map
  $h \colon A_{\alpha,k} \to D_{0,k}$ and a Laurent polynomial~$u$ having exponents
  between $n_1$ and~$n_2$ with $n_1 \le 0 \le n_2$ and $n_2 - n_1 \le 2r$ ($\le r$
  if the annulus is even). Since $r \ge 1$ (or $r \ge 2$ in the even case), we
  can in fact assume that $n_1 < 0 < n_2$.
  Given this, the proof can be carried out using Newton polygons
  in essentially the same way as for power series.
  One possibility for this is to consider the `positive' and the `negative'
  part of the formal integral separately. To the positive part, we can directly
  apply the corresponding result for power series; for the negative part, we
  substitute $z \leftarrow z^{-1}$. The bound we obtain for the length of the
  relevant interval of exponents (belonging to segments of the Newton polygon
  corresponding to zeros of absolute values in the largest $k$-defined closed annulus
  contained in~$A_\alpha$) is then $n_2 - n_1 + \Delta_k(2, n_2 - n_1)$, which for $p > e+1$
  can be bounded as stated.
\end{proof}

Note that in contrast to the corresponding result for disks, the
`$1 + {}$' term that causes the non-uniformity of the `classical' Chabauty-Coleman
bound does not show up here. This is because the constant of integration affects
a coefficient whose exponent lies within the relevant part of the Newton polygon
of the formal integral, whereas in the power series case, it can increase the
length of the relevant range of exponents by~$1$.

\begin{corollary} \label{C:points-A}
  Let $V$ be a linear subspace of codimension $r \le g-3$ of the space of regular
  differentials on~$C$, where $C$ is as in Proposition~\ref{P:good-A}.
  Let $t$ be as in Proposition~\ref{P:DAbounds}.
  Then the number of common zeros in~$C_A(k)$ of all $\lambda_\omega$
  for $\omega \in V$ is bounded by
  \[ (2g - 3 + t) B_A(p, e, r+2) \,. \]
  If $p > e + 1$, then we have the bound
  \[ \min\{2g-1, 2g - 3 + t\} \cdot 2 \mu (r+2) \,. \]
\end{corollary}

\begin{proof}
  For each annulus $A$ occurring in~$C_A(k)$,
  we let $V_A$ be the subspace of~$V$ consisting of differentials
  that are good for~$A$. Then $V_A$ has codimension at most $r+2 < g$,
  and by Proposition~\ref{P:good-A}
  the number of common zeros of~$\lambda_\omega$ on~$A$ for $\omega \in V_A$
  is at most~$B_A(p,e,r+2)$. We
  multiply by the bound $2g-3+t$ for the number of annuli from Proposition~\ref{P:DAbounds}
  to obtain the result.

  Now assume that $p > e+1$; then $B_A(p,e,r+2)$ is bounded by the second
  factor in the last formula.
  By the last statement in Proposition~\ref{P:good-A}, we can replace $2g-3+t$
  by a bound for the number of orbits of annuli under the hyperelliptic involution,
  which can be obtained as follows.
  The image of
  a minimal skeleton of the $p$-adic Berkovich analytic space associated to~$C$
  in the Berkovich projective line is a tree with at most~$2g$ nodes (it is obtained
  from the convex hull of the branch points, which is a tree with $2g+2$ leaves,
  by removing the leaves and the edges connected to them) and hence at
  most~$2g-1$ edges. The edges correspond to the orbits of annuli under~$\iota$,
  so there are at most~$2g-1$ such orbits. (These are the orbits we see when
  $C$ has split semistable reduction. Since annuli persist under finite extensions
  of the $p$-adic base field, this gives an upper bound for the orbits of annuli
  that are relevant to us here.)
\end{proof}


\section{Bounding the number of points mapping into a subgroup of small rank}
\label{S:mainT}

In this section we state and prove our main result.

\begin{theorem} \label{T:local-main}
  Let $k$ be a $p$-adic field with $p$ odd and write $e$ for the ramification
  index of~$k$ and $q$ for the size of its residue field. Let $g \ge 3$ and $0 \le r \le g-3$.
  Then there is a bound $N(k, g, r)$ depending only on~$k$, $g$ and~$r$ such that
  the following holds.

  Let $C \colon y^2 = f(x)$ be a hyperelliptic curve of genus~$g$ over~$k$.
  We denote by~$J$ the Jacobian variety of~$C$. Let $\Gamma \subset J(k)$ be
  a subgroup of rank~$r$.
  Let $i \colon C \to J$ be an embedding given by choosing some basepoint~$P_0 \in C(k)$.
  Then
  \[ \#\{P \in C(k) : i(P) \in \Gamma\} \le N(k, g, r) \,. \]

  If $p > e + 1$, then we can take
  \begin{align*}
    N(k, g, r) &= \bigl(2 + 5q + 4 \mu (r + 2)\bigr) (g-1)
                  + \max\{3q-4\mu, 4\mu (r+1) - 3q\} \\
               &\le \bigl(2 + 5q + 4 \mu (r+2)\bigr) g \,,
  \end{align*}
  where $\mu = (p-1)/(p-e-1) \le e+1$.
\end{theorem}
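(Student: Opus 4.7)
\emph{Proof plan.} The strategy is to combine the Chabauty-Coleman setup from Section~\ref{S:notation} with the local bounds established in Sections~\ref{S:part} and~\ref{S:integ}. First, I would let $V_0 \subset \Omega^1_J(k)$ be the $k$-linear subspace annihilating $\Gamma$ under the Chabauty-Coleman pairing. Since the pairing has trivial left kernel and $\Gamma$ has rank $r$ (and torsion elements lie in the right kernel automatically), $\dim_k V_0 \ge g - r \ge 3$. Pulling back via the isomorphism $i^* \colon \Omega^1_J \to \Omega^1_C$, the subspace $V \colonequals i^* V_0 \subset \Omega^1_C(k)$ has codimension at most $r$. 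By construction, every $P \in C(k)$ with $i(P) \in \Gamma$ satisfies $\lambda_\omega(P) = 0$ for all $\omega \in V$, so it suffices to bound the number of common zeros of the family $\{\lambda_\omega : \omega \in V\}$ on $C(k)$.

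Next, I would apply Proposition~\ref{P:DAbounds} to write $C(k) = C_D(k) \sqcup C_A(k)$ as a disjoint union of $N_D$ residue disks with $N_D \le (5q+2)(g-1) - 3q(t-1)$, and $N_A \le 2g-3+t$ residue annuli, for some $0 \le t \le g$ determined by the geometry of~$C$. On the disk part, Lemma~\ref{L:points-D} applied to $V$ bounds the number of common zeros by $N_D + 2r + e\lfloor 2r/(p-e-1)\rfloor$ once $p > e+1$. On each residue annulus $A$ in the partition, the subspace $V_A \subset V$ of differentials that are \emph{good} for $A$ has codimension at most $r+2 \le g-1$ in $\Omega^1_C(k)$, and Corollary~\ref{C:points-A} (which in turn rests on Propositions~\ref{P:integration-A} and~\ref{P:good-A}) bounds the total number of common zeros across the annulus part by $(2g-3+t)\beta$, where $\beta \colonequals 2(r+2) + e\lfloor 2(r+2)/(p-e-1)\rfloor$.

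Summing the two contributions yields the upper bound
\[
  (5q+2)(g-1) - 3q(t-1) + 2r + e\lfloor 2r/(p-e-1)\rfloor + (2g-3+t)\beta.
\]
The $t$-dependent terms collect as $t(\beta - 3q)$, so taking the maximum over $t \in \{0,1,\ldots,g\}$ contributes exactly $g\max\{0,\beta - 3q\}$, which (added to the $t=0$ value) produces the displayed formula for $N(k,g,r)$. The progressively simpler upper bounds at the end follow by routine estimation: the elementary inequality $2s + e\lfloor 2s/(p-e-1)\rfloor \le 2s\mu$ for $s \in \{r, r+2\}$, with $\mu = (p-1)/(p-e-1) \le e+1$, turns the precise formula into the intermediate and then the final form $g(2 + 5q + 6\mu(r+2))$.

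The proof is therefore essentially bookkeeping: all the substantial work has already been done in Proposition~\ref{P:DAbounds} (which rests on the combinatorial Theorem~\ref{T:specialfiber}), Lemma~\ref{L:points-D}, and Corollary~\ref{C:points-A} (which rests on the delicate Proposition~\ref{P:integration-A}). The only mildly subtle point is recognizing that $t$ is a curve-dependent invariant and hence the total bound must be taken uniformly in~$t$, which produces the $\max\{0,\beta - 3q\}$ term that gives the final expression its slightly ungainly shape.
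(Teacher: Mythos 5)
Your proposal is correct and follows essentially the same route as the paper's own proof: construct the annihilating subspace $V$ of codimension $\le r$, split $C(k)$ into disks and annuli via Proposition~\ref{P:DAbounds}, apply Lemma~\ref{L:points-D} and Corollary~\ref{C:points-A} respectively, and maximize the sum over $t$. Your explicit bookkeeping of the $t$-dependent terms as $t(\beta - 3q)$ correctly reproduces the displayed formula, including the $g\max\{0,\beta-3q\}$ term.
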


\begin{proof}
  The rank condition implies that there is a $k$-vector space~$V$
  of regular differentials on~$C$ of codimension $\le r \le g-3$ and
  such that each $\omega \in V$ annihilates~$\Gamma$ under the Chabauty-Coleman pairing.
  This means that (taking~$P_0$ to be the basepoint for~$\lambda_\omega$)
  the set of points in question is contained in the common zero set of
  all~$\lambda_\omega$ for $\omega \in V$. We can then use
  Lemma~\ref{L:points-D} and Corollary~\ref{C:points-A} to bound the
  number of points in~$C_D(k)$ and in~$C_A(k)$, respectively, that map to~$\Gamma$.
  Adding these bounds gives the first result.

  In the case $p > e+1$, adding the corresponding explicit bounds and maximizing
  over $0 \le t \le g$ gives the bound
  \begin{align*}
   (5q+2) (g-1) &+ 3 q + 2 \mu r + (2g - 3) 2 \mu (r+2)
                        + 2 \max\{0, 2 \mu (r+2) - 3q\} \\
                &= \bigl(2 + 5 q + 4 \mu (r + 2)\bigr) (g-1)
                  + \max\{3q-4\mu, 4\mu (r+1) - 3q\} \,. \qedhere
  \end{align*}
\end{proof}

\begin{remark}
  It is conceivable that a more careful analysis of the functions $\lambda_\omega$
  on annuli will result in a bound for the number of zeros that applies to
  differentials~$\omega$ that do not necessarily satisfy the conditions that
  $c(\omega)$ and/or~$a(\omega)$ (in the notation of Proposition~\ref{P:integration-A}) vanish.
  If this is indeed the case, then the condition
  $r \le g-3$ can be relaxed to $r \le g-2$ or even $r \le g-1$.
  However, in view of the facts that $\Log_0(z) = 0$ has infinitely many solutions
  in~$\Q_p$ and that the number of solutions to $z^{-1} + a v(z) + z = 0$
  is unbounded when the valuation of~$a$ can be arbitrarily negative, it is very
  likely that more subtle arguments will be necessary to obtain uniform bounds
  under these less restrictive assumptions.
\end{remark}

\begin{remark} \label{R:variants}
  We sketch two variants of the approach taken here.
  \begin{enumerate}[(i)]\addtolength{\itemsep}{1mm}
    \item One possibility is to prove a result like Theorem~\ref{T:local-main}
          above for \emph{semi-stable} curves. Since a curve of genus~$g$ over
          a $p$-adic field~$k$ acquires semi-stable reduction over an extension
          of~$k$ of degree bounded in terms of~$g$ only, this implies the general
          result. The advantage of this approach is that the structure of the
          special fiber of the minimal regular model is much easier to understand,
          so the discussion of the combinatorics of arithmetic graphs as in
          Section~\ref{S:AG} can be bypassed. The disadvantage is that the
          explicit bounds one obtains are much worse, since one is effectively
          working over much larger fields.
    \item Another possibility is to prove directly that for a given hyperelliptic
          curve~$C$ of genus~$g$ over~$k$, one can partition $\PP^1(k)$
          into $\ll q g$ disks containing at most one branch point and $\ll g$
          disks containing exactly two branch points and annuli containing
          no branch points of the hyperelliptic covering map $\pi \colon C \to \PP^1$
          as in Lemmas \ref{L:disk} and~\ref{L:annulus}.
          Since each of the former gives rise to zero, one or two residue disks
          on~$C$ (when $p$ is odd) and each of the latter gives rise to zero, one or two annuli,
          one obtains a result similar to Proposition~\ref{P:DAbounds}.
          The advantage is again that one circumvents the discussion of arithmetic
          graphs, which, however, has to be replaced by a discussion of partitions
          of~$\PP^1(k)$ as above. A disadvantage of this approach is that it
          is restricted to hyperelliptic curves from the start.
          Another advantage is that with some modifications it also works for $p = 2$.
  \end{enumerate}
  No matter which approach is taken, Proposition~\ref{P:good-A} remains
  the crucial ingredient of the proof.
\end{remark}


\section{A uniform bound on the number of rational points} \label{S:ratpts}

We can apply the result of the previous section to obtain bounds for
the number of rational points on hyperelliptic curves with small
Mordell-Weil rank relative to the genus.

\begin{theorem} \label{T:main-ratpoints-general}
  Let $g \ge 3$, $d \ge 1$ and $0 \le r \le g-3$. Then there is a bound
  $R(d,g,r)$ depending only on $d$, $g$ and~$r$ such that for any
  hyperelliptic curve~$C$ of genus~$g$ over a number field~$K$
  of degree at most~$d$ such that the Mordell-Weil rank of its Jacobian
  is~$r$, we have $\#C(K) \le R(d,g,r)$.

  If $d = 1$ (hence $K = \Q$), we can take
  \[ R(1,g,0) = 33(g - 1) + 1 \qquad \text{and} \qquad
     R(1,g,r) = 8rg + 33(g - 1) - 1 \quad \text{for $r \ge 1$.}
  \]
\end{theorem}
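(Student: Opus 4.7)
The plan is to derive the global statement from Theorem~\ref{T:local-main} by applying it at a well-chosen $p$-adic completion of~$K$. If $C(K) = \emptyset$ the bound is trivial; otherwise, pick a point $P_0 \in C(K)$ and use it as basepoint for the embedding $i \colon C \to J$, $P \mapsto [P-P_0]$. For any finite place~$v$ of~$K$ we have $C(K) \subset C(K_v)$ and $i(C(K)) \subset J(K)$, and $J(K)$ embeds into $J(K_v)$ as a finitely generated subgroup of the same rank~$r$, so whenever $v$ lies above an odd rational prime we obtain
\[
  \#C(K) \le \#\bigl\{P \in C(K_v) : i(P) \in J(K)\bigr\} \le N(K_v, g, r).
\]
Everything then reduces to choosing~$v$ well enough to extract the advertised bound.

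For the existence of $R(d,g,r)$ in general, I would fix any odd rational prime $p > d + 1$ (for instance, the smallest such prime). Every number field~$K$ of degree $\le d$ has at least one place~$v$ above~$p$, and any such completion $k = K_v$ has ramification index $e \le d$, residue field of size $q \le p^d$, and satisfies $p > e + 1$. Hence the second, explicit form of $N(k,g,r)$ in Theorem~\ref{T:local-main} applies and depends only on $p$, $e$, $q$, $g$, $r$; since $p$, $e$, $q$ are bounded in terms of~$d$, this yields a valid $R(d,g,r)$.

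For the sharper bound in the case $d = 1$, $K = \Q$, I would take $p = 3$, so $k = \Q_3$ has $e = 1$, $q = 3$ and $p - e - 1 = 1$. Substituting into Theorem~\ref{T:local-main} collapses the formula to
\[
  N(\Q_3, g, r) = 17(g-1) + 9 + 4r + (2g-3)(4r+8) + g \max\{0,\,4r - 1\}.
\]
For $r = 0$ the last term vanishes and the right-hand side equals $33g - 32 = 32(g-1) + g$; for $r \ge 1$ it rearranges to $12rg + 32g - 8r - 32 = 8(r+4)(g-1) + 4rg$. In both cases this is exactly $8(r+4)(g-1) + \max\{1,4r\}\cdot g$, giving the claimed $R(1,g,r)$. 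All the conceptual work has already been done in Theorem~\ref{T:local-main}; the only task here is this piecewise algebraic bookkeeping, and the case-split between $r=0$ and $r \ge 1$ in the verification is precisely what produces the $\max\{1, 4r\}$ factor in the stated bound.
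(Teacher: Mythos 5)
Your proposal is correct and follows essentially the same route as the paper: reduce to Theorem~\ref{T:local-main} applied to $\Gamma = J(K) \subset J(K_v)$ at a place $v$ above a fixed odd prime (the paper takes the maximum of $N(k,g,r)$ over the finitely many possible completions, while you note $p>d+1$ guarantees the explicit formula applies; these amount to the same thing), and then specialize to $k=\Q_3$ for $d=1$. Your arithmetic verification of the $d=1$ bound, including the $r=0$ versus $r\ge 1$ case split producing $\max\{1,4r\}$, checks out.
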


\begin{proof}
  Fix some odd prime~$p$. Then there are only finitely many possible
  completions~$k$ at places above~$p$ of number fields of degree~$\le d$.
  We take $R(d,g,r)$ to be the maximum of the bounds~$N(k,g,r)$
  of Theorem~\ref{T:local-main} over all these~$k$.

  Let $C$ be a curve as in the statement. If $C(K) = \emptyset$, there
  is nothing to prove. So we can assume that there is some $P_0 \in C(K)$,
  which we use as basepoint for an embedding $i \colon C \to J$.
  We can then apply Theorem~\ref{T:local-main} to $C$ base-changed
  to a completion~$k$ of~$K$ at a place above~$p$ and to
  $\Gamma = J(K) \subset J(k)$.

  To obtain the bound for $d = 1$, we take $k = \Q_3$ (with $p = 3 > 2 = e + 1$
  and $q = p = 3$).
\end{proof}

\begin{remark}
  We note that by choosing $p \approx \sqrt{r}$ for large~$r$ instead of~$p = 3$,
  one obtains a bound with leading term $(4r + O(\sqrt{r})) g$.
\end{remark}

\begin{remark}
  Using the bound in Theorem~\ref{T:local-main} when $p > e+1$, we obtain
  the estimate
  \[ R(d,g,r) \ll g \bigl(p^d + d(r+1)\bigr) \ll g \bigl((2d)^d + d(r+1)\bigr) \]
  where $p$ is the smallest prime $> d+1$. (The worst case is when $K$ is totally
  ramified at all primes $\le d+1$ and inert at all reasonably small primes $> d+1$.)
\end{remark}

Taking $r = 0$, we obtain the following.

\begin{corollary} \label{C:torsion}
  Let $C$ be a hyperelliptic curve of genus $g \ge 3$ over~$\Q$. Then
  any torsion packet on~$C$ can contain at most $33 (g - 1) + 1$ rational points.
\end{corollary}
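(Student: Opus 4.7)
The plan is a direct reduction to the local result Theorem~\ref{T:local-main} (from which the explicit $R(1,g,0)$ in Theorem~\ref{T:main-ratpoints-general} is derived), specialized to $r = 0$; the condition $0 \le g-3$ is exactly our hypothesis $g \ge 3$. Recall that a torsion packet on~$C$ is a subset $T \subset C(\bar{\Q})$ such that for any two $P, Q \in T$, the divisor class $[P - Q] \in J(\bar{\Q})$ is torsion. If $T \cap C(\Q) = \emptyset$ there is nothing to prove; otherwise, I would pick some $P_0 \in T \cap C(\Q)$ and use it as basepoint for the Abel--Jacobi embedding $i \colon C \to J$, $P \mapsto [P - P_0]$.

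By the defining property of a torsion packet, every $P \in T \cap C(\Q)$ satisfies $i(P) \in J(\bar{\Q})_{\tors}$; since $P$ and~$P_0$ are both $\Q$-rational, $i(P)$ is Galois-invariant and hence lies in the subgroup $\Gamma \colonequals J(\Q)_{\tors} \subset J(\Q)$. This $\Gamma$ is finitely generated of rank~$0$, so, regarding it as a subgroup of $J(\Q_3)$, Theorem~\ref{T:local-main} with $k = \Q_3$ and $r = 0$ yields
\[ \#(T \cap C(\Q)) \le \#\{P \in C(\Q_3) : i(P) \in \Gamma\} \le N(\Q_3, g, 0). \]

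It remains only to evaluate this explicit constant. For $k = \Q_3$ we have $p = 3 > 2 = e+1$ and $q = 3$, so the formula in Theorem~\ref{T:local-main} (which is exactly how $R(1, g, r)$ is computed in the proof of Theorem~\ref{T:main-ratpoints-general}) specializes at $r = 0$ to
\[ R(1, g, 0) = 8(0+4)(g-1) + \max\{1, 0\} \cdot g = 32(g-1) + g = 33(g-1) + 1, \]
which is the stated bound. There is essentially no obstacle: all the substantive work is packaged into Theorem~\ref{T:local-main}, and the only genuinely new step is the tautological identification of the rational part of a torsion packet, through a rational basepoint in it, with $i^{-1}(J(\Q)_{\tors}) \cap C(\Q)$.
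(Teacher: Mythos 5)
Your proof is correct and is essentially the paper's own argument: the paper states the corollary as an immediate consequence of the $r=0$ case, the intended mechanism being exactly your application of Theorem~\ref{T:local-main} over $k=\Q_3$ to the rank-zero subgroup $\Gamma = J(\Q)_\tors$ via a rational basepoint in the packet, and your evaluation $N(\Q_3,g,0)=33(g-1)+1$ matches. Your routing through Theorem~\ref{T:local-main} (rather than the literal statement of Theorem~\ref{T:main-ratpoints-general}, which assumes the Mordell--Weil rank itself is $r$) is the right reading, since the corollary makes no hypothesis on the rank of $J(\Q)$.
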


Recall that a \emph{torsion packet} on~$C$ is a subset of~$C$ such that the
difference of any two points in the set is a torsion point on the Jacobian.

If we write $T(g)$ for the maximal number of rational points in a torsion
packet on a hyperelliptic curve of genus~$g$ over~$\Q$, then this gives
\[ 2 \le \liminf_{g \to \infty} \frac{T(g)}{g}
     \le \limsup_{g \to \infty} \frac{T(g)}{g} \le 33
\]
(the leftmost inequality is obtained by considering curves with all $2g+2$
Weierstrass points rational). So we know that the growth rate of~$T(g)$ is linear!
An analogous statement holds for the size of a set of rational points
mapping into a subgroup of rank $\le r$.


\section{A uniform version of the Poonen-Stoll result} \label{S:rholog}

Let $C$ be a curve of genus~$g$ over the $p$-adic field~$k$. We fix a $k$-basis
$\uom = (\omega_1, \ldots, \omega_g)$ of the space of regular differentials on~$C$
defined over~$k$. We also fix a point $P_0 \in C(k)$. As
in~\cite{PoonenStoll2014}, we write $\rho$ for the partially
defined composition
\[ k^g \dashedarrow k^g \setminus \{0\} \To \PP^{g-1}(k) \To \PP^{g-1}(\kappa) \]
(recall that $\kappa$ denotes the residue field of~$k$). We define the map
\[ {\log_{\uom}} \colon C(k) \To k^g, \quad
   P \longmapsto \oint_{P_0}^P \underline{\omega} \,;
\]
then we have the partially (away from the finitely many points mapping to torsion
under the embedding of $C$ into~$J$ given by the base-point~$P_0$) defined composition
\[ \rholog_{\uom} \colon C(k) \dashedarrow \PP^{g-1}(\kappa) \,. \]
For a subset~$X$ of~$C(k)$, we write $\rholog_{\uom}(X)$ for the image of the
subset of~$X$ consisting of elements on which $\rholog_{\uom}$ is defined.

We now specialize to $k = \Q_2$.

\begin{lemma} \label{L:rhologA}
  Let $\varphi \colon A_{\alpha,\Q_2} \to C$ be an annulus.
  Then with the notation introduced above, we have
  \[ \#\rholog_{\uom}(\varphi(A_{\alpha}(\Q_2))) \le 96 (g-1) + 31 \,. \]
\end{lemma}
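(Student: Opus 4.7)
The plan is to parameterize $\xi \in A_0(\Q_2)$ as $\xi = 2^n u$ with $n = v(\xi) \in \Z \cap (\rho_1, \rho_2)$ and $u \in 1 + 2\Z_2$, and to bound the number of distinct $\rholog$-images by combining a slice-by-slice disk-type Newton polygon bound (for fixed~$n$) with control on how the dominant terms of the components $L_j(\xi) \colonequals \lambda_{\omega_j}(\varphi(\xi))$ change as $n$ varies.

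First, Proposition~\ref{P:integration-A} gives $L_j(\xi) = \ell_j(\xi) + c_j \Log_0(\xi) + a_j v(\xi) + b_j$ for suitable Laurent series~$\ell_j$ and constants $c_j, a_j, b_j \in \Q_2$.  By Lemma~\ref{L:core1}, $\varphi^* \omega_j = u_j(z) h(z)\,dz$ on $A_0$, with $u_j$ a Laurent polynomial whose exponents lie in a common interval $[n_-, n_+]$ of width at most $2g-2$ and $h$ a common factor satisfying $|h(\xi) - 1| < 1$.  Using $\Log_0(2) = 0$, so that $\Log_0(\xi) = \Log_0(u)$ when $\xi = 2^n u$, this becomes
\[ L_j(\xi) = \ell_j(2^n u) + c_j \Log_0(u) + a_j n + b_j, \]
in which the dependence on $n$ and~$u$ is cleanly separated.

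Second, for $|n|$ large the Newton polygon of $\ell_j(2^n\,\cdot\,)$ is dominated by a single monomial of~$u_j$ (the extremal exponent in $[n_-, n_+]$ depending on the sign of~$n$), so the valuations $v(L_j(\xi))$ become affine functions of~$n$ with integer slopes in $[n_-, n_+]$ and their relative order stabilizes outside a central window of at most $\sim 2g$ values of~$n$.  In the stable tails the $\rholog$-image stabilizes up to a bounded number of mod-$2$ corrections from $a_j n + c_j \Log_0(u)$; in the central window, for each fixed~$n$ the slice $\{\xi : v(\xi) = n\}$ is parameterized by $u \in 1 + 2\Z_2$ and a disk-type Newton polygon bound in the spirit of~\cite{PoonenStoll2014}*{Section~8} yields a bound linear in~$g$ per slice.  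Summing over the central window, adding the stable-tail contribution, and adding the boundary correction from the at most four residue disks in $A \setminus A_0$ (Lemma~\ref{L:core1}), one obtains a bound of the claimed form.

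The main obstacle is the second step, specifically arguing that the $\rholog$-image across the $\sim 2g$ transition slices grows only linearly in~$g$ rather than exponentially in $\#\PP^{g-1}(\F_2)$.  The key observation is that at each transition valuation, only a single dominant exponent among $L_1, \ldots, L_g$ flips, so the new $\rholog$-values split off one $\F_2$-hyperplane at a time, bounding the increment by a constant.  Pinning down the explicit constants $48$ and $31$ requires careful tracking of the Newton-polygon bound $\delta(v,n)$ at $p = 2$, $e = 1$ (where it is much weaker than at odd~$p$) and aggregating the per-slice bounds, which is the most delicate bookkeeping of the argument.
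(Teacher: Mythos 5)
Your slicing strategy has a genuine gap at its core: the number of integer valuations $n$ with $\rho_1 < n < \rho_2$ is \emph{not} bounded in terms of~$g$ (the chain, hence the annulus, can be arbitrarily long), so everything rests on your claim that outside a ``central window'' of $\sim 2g$ slices the $\rholog$-image stabilizes. That claim is unsubstantiated and is in fact the crux of the difficulty. On a tail slice the $j$-th coordinate is $\ell_j(2^n u) + c_j\Log_0(u) + a_j n + b_j$; the valuation of the Laurent-series part moves linearly in~$n$, while $v(a_j n) = v(a_j) + v(n)$ oscillates with the $2$-adic valuation of the integer~$n$, so which term dominates can switch back and forth unboundedly often across the range of~$n$ when $v(a_j)$ is very negative. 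This is exactly the phenomenon flagged in the remark after Theorem~\ref{T:local-main} (the equation $z^{-1} + a\,v(z) + z = 0$ can have an unbounded number of solutions), and your heuristic that ``only a single dominant exponent flips per transition'' does not engage with it. Moreover, even granting stabilization, your accounting does not give the stated bound: summing a per-slice bound that is ``linear in~$g$'' over a window of $\sim 2g$ slices yields a bound quadratic in~$g$ for a single annulus, not $48(g-1)+31$. (A smaller point: a core annulus is already the $A_0$ of Lemma~\ref{L:core1}, so no correction by four boundary residue disks belongs in this lemma.)

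The paper's proof takes a different and much shorter route that sidesteps the $\underline{a}$, $\underline{b}$, $\underline{c}$ terms entirely. Since $\#\rholog_{\uom}(\varphi(A))$ depends only on the $\Z_2$-module spanned by~$\uom$, one may change basis so that $\underline{a}$, $\underline{b}$, $\underline{c}$ are supported in the first three coordinates. The last $g-3$ coordinates of $\log_{\uom}\circ\varphi$ are then pure Laurent series on the whole annulus, to which a Laurent-series analogue of \cite{PoonenStoll2014}*{Proposition~3.8} applies in one stroke, giving at most $3(n_+-n_-)+3 \le 3(2g-2)+3$ values of $r(0,0,0,\ell_4(\xi),\ldots,\ell_g(\xi))$; the first three coordinates are absorbed by a factor of $\#\F_2^3 = 8$, plus $7$ for the degenerate case where $(\ell_4,\ldots,\ell_g)(\xi)=0$, yielding $8\bigl(3(2g-2)+3\bigr)+7 = 48(g-1)+31$. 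If you want to rescue a valuation-slicing argument, you would need uniform (in the length of the annulus) control of the $a_j v(\xi)$ and $c_j\Log_0(\xi)$ contributions, which is precisely what the basis change makes unnecessary.
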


\begin{proof}
  We first need a version of~\cite{PoonenStoll2014}*{Proposition~3.8}
  for Laurent series. So let $\underline{\ell}$ and $\underline{w}$ be
  tuples of Laurent series with coefficients in~$\Q_2$, converging on~$A_{\alpha}$
  and with $d\underline{\ell}(z)/dz = \underline{w}(z)$.
  By Proposition~\ref{P:diffKRZB},
  a linear combination $\sum b_j w_j(z)$, with $(b_j)$ a $\Z_2$-basis
  of the ring of integers of a suitable unramified extension of~$\Q_2$,
  can be written in the form $u(z) h(z)$, where $|h(\xi) - 1| < 1$ for all
  $\xi \in A_{\alpha}(\C_2)$ and $u$ is a Laurent polynomial
  with exponents contained in $[-2g+1, 2g-3]$. Then
  \begin{equation} \label{E:laurbound}
    \#\rho(\underline{\ell}(A_{\alpha}(\Q_2))) \le 12 (g-1) + 3 \,.
  \end{equation}
  This can be proved in the same way as~\cite{PoonenStoll2014}*{Proposition~3.8};
  the point is that the relevant range of exponents of~$\underline{\ell}$
  is contained in $[-2g+2 - \delta(v, 2g-3), 2g-2 + \delta(v, 2g-3)]$.
  We also use $\delta(v, n) \le 1 + n/2$. (This is also analogous to the proof
  of Proposition~\ref{P:good-A}.)

  Write $\varphi^*\uom = d \underline{\ell}(z) + \underline{c}\frac{dz}{z}$
  with $\underline{c} \in \Q_2^g$; we can assume that the constant term
  in~$\underline{\ell}(z)$ is zero. Let $\underline{a} = (a_1, \ldots, a_g)$
  with $a_j = a(\omega_j)$ be the constants arising in Proposition~\ref{P:integration-A}.
  Then, by the same proposition, we have
  \[ \log_{\uom}(\varphi(\xi))
        = \underline{\ell}(\xi) + \underline{c} \Log_0(\xi) + \underline{a} v(\xi) + \underline{b}
  \]
  with a constant vector~$\underline{b}$.
  Let $r \colon \Q_2^g \setminus \{0\} \to \F_2^g \setminus \{0\}$ be the map
  that first scales its argument by a power of~$2$ so that
  its entries are coprime elements of~$\Z_2$ and then reduces it mod~$2$
  (so that $\rho$ is $r$ followed by the canonical map
  $\F_2^g \setminus \{0\} \to \PP^{g-1}(\F_2)$). Since the size of
  $\#\rholog_{\uom}(\varphi(A_{\alpha}(\Q_2)))$ depends only
  on the $\Z_2$-module generated by~$\uom$,
  we are free to replace $\uom$ by any other $\Z_2$-basis of this module.
  We can choose a basis such that all of $\underline{a}$, $\underline{b}$ and~$\underline{c}$
  are of the form $(\ast,\ast,\ast,0,\ldots,0)$.
  We assume in the following that $\underline{a}$, $\underline{b}$ and~$\underline{c}$
  are linearly independent. (If the dimension of their span is strictly less than~$3$,
  an argument similar to that carried out below results in a better bound.)
  For any given $\xi \in A_{\alpha}(\Q_2)$
  such that $\rholog_{\uom}(\varphi(\xi))$ is defined,
  we then have that $r(\log_{\uom}(\varphi(\xi)))$ is of the form
  $(\beta_1,\beta_2,\beta_3,0,\ldots,0)$ with
  $(\beta_1,\beta_2,\beta_3) \in \F_2^3 \setminus \{(0,0,0)\}$
  or $(\beta_1,\beta_2,\beta_3,\lambda_4,\ldots,\lambda_g)$
  with $(\beta_1,\beta_2,\beta_3) \in \F_2^3$,
  where $(0,0,0,\lambda_4, \ldots, \lambda_g) = r(0,0,0,\ell_4(\xi),\ldots,\ell_g(\xi))$.
  This shows that
  \begin{equation} \label{E:bound-rholog}
    \#\rholog_{\uom}(\varphi(A_{\alpha}(\Q_2)))
      \le 8\#\{r(0,0,0,\ell_4(\xi),\ldots,\ell_g(\xi))
                 : \xi \in A_{\alpha}(\Q_2), (\ell_4,\ldots,\ell_g)(\xi) \neq 0\} + 7 \,.
  \end{equation}
  Now \eqref{E:laurbound}, applied to $(\ell_4, \ldots, \ell_g)$, gives
  \[ \#\{r(0,0,0,\ell_4(\xi),\ldots,\ell_g(\xi))
                 : \xi \in A_{\alpha}(\Q_2), (\ell_4,\ldots,\ell_g)(\xi) \neq 0\}
        \le 12(g-1) + 3 \,.
  \]
  Using this in~\eqref{E:bound-rholog} gives the bound in the statement of the lemma.
\end{proof}

This now implies a uniform bound on $\#\rholog_{\uom}(C(\Q_2))$.

\begin{proposition} \label{P:rhologim}
  Let $C$ be a curve of genus~$g$ over~$\Q_2$. Then
  \[ \#\rholog_{\uom}(C(\Q_2)) \le 288 (g-1)^2 + 129 (g-1) \,. \]
  In particular, $\#\rholog(C(\Q_2)) \le 288 (g-1)^2 + 129 (g-1)$, where $\rholog$ is
  as in~\cite{PoonenStoll2014}.
\end{proposition}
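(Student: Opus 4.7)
The plan is to use the partition from Lemma~\ref{L:core-annuli} and bound $\#\rholog_{\uom}$ on each piece separately.

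By Lemma~\ref{L:core-annuli}, $C(\Q_2)$ is the union of at most $20g-18$ residue disks and at most $3g-3$ core annuli. For each core annulus $A$, Lemma~\ref{L:rhologA} gives $\#\rholog_{\uom}(\varphi(A)) \le 48(g-1)+31$. Summing over all core annuli contributes at most $3(g-1)\bigl(48(g-1)+31\bigr) = 144(g-1)^2 + 93(g-1)$ to the total.

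For residue disks, I would establish a disk analogue of Lemma~\ref{L:rhologA}. On a disk the pullback $\varphi^*\uom = d\underline{\ell}(z)$ has no $dz/z$ term and $\underline{\ell}(0) = 0$, so $\log_{\uom}(\varphi(\xi)) = \underline{b} + \underline{\ell}(\xi)$ with $\underline{b} = \log_{\uom}(\varphi(0))$ constant, and with neither an $\underline{a}\,v(\xi)$ nor a $\underline{c}\Log_0(\xi)$ contribution as on an annulus. The proof of Lemma~\ref{L:rhologA} simplifies considerably in this setting: after replacing $\uom$ by a suitable $\Z_2$-basis of the same lattice (which does not alter $\#\rholog_{\uom}$), one concentrates $\underline{b}$ in a single coordinate, and the $\rholog_{\uom}$-image of~$\varphi(D)$ splits into at most $2 N_D + 1$ values, where $N_D$ bounds the $\rho$-image in $\PP^{g-2}(\F_2)$ of the tuple $(0,\ell_2(\xi),\ldots,\ell_g(\xi))$. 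A careful analysis, using a power-series version of the generalized \cite{PoonenStoll2014}*{Proposition~3.8} as in the proof of Lemma~\ref{L:rhologA}, should give $N_D \le 2$ and hence the per-disk bound $\#\rholog_{\uom}(\varphi(D)) \le 5$.

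Summing $5$ over the at most $20g-18$ residue disks contributes at most $100(g-1)+10$, so in total
\[
  \#\rholog_{\uom}(C(\Q_2)) \le 144(g-1)^2 + 193(g-1) + 10 \le 144(g-1)^2 + 199(g-1) + 10,
\]
as claimed. The statement about $\#\rholog(C(\Q_2))$ then follows because this cardinality does not depend on the choice of basis $\uom$.

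The main obstacle is the per-disk bound: a direct invocation of \cite{PoonenStoll2014}*{Proposition~3.8} on a power series with exponents up to $g$ would yield only an $O(g)$ bound per disk, whose summed contribution would be $O(g^2)$ with the wrong leading constant and would destroy the stated estimate. A more refined argument is required, exploiting $\underline{\ell}(0)=0$ and a judicious per-disk basis change to collapse the effective exponent range of the relevant Laurent polynomial part to a constant.
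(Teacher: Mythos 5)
Your treatment of the annuli is exactly the paper's: partition via Lemma~\ref{L:core-annuli}, apply Lemma~\ref{L:rhologA} to each of the at most $3g-3$ core annuli, contributing $144(g-1)^2+93(g-1)$. The gap is in the disks. You claim a per-disk bound $\#\rholog_{\uom}(\varphi(D))\le 5$, resting on the assertion that a ``judicious per-disk basis change'' can collapse the exponent range of the polynomial part of $\sum_j b_j w_j(t)$ to a constant. This is false in general: the degree of that polynomial part on a given disk is governed by the order of vanishing at the corresponding point of the special fiber of a suitable reduction of a differential from the lattice, and a differential can vanish to order as large as $2g-2$ at a single point, no matter which $\Z_2$-basis of the lattice you choose. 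What is true --- and what the paper uses by simply invoking \cite{PoonenStoll2014}*{Proposition~5.4} with $p=2$ --- is an \emph{aggregate} bound: each disk contributes $5$ plus a term proportional to the number of zeros in that disk, and the zero counts summed over all disks are bounded by $6g-6$ (coming from the degree $2g-2$ of the canonical divisor). This yields $\#\rholog_{\uom}(C_D)\le 5d+6g-6\le 5(20g-18)+6g-6=106(g-1)+10$, and the missing $6(g-1)$ is precisely the difference between your total $144(g-1)^2+193(g-1)+10$ and the stated $144(g-1)^2+199(g-1)+10$. So your final inequality happens to land inside the claimed bound, but only because you have discarded a contribution that cannot be discarded; the disk step needs to be replaced by the summed-zero-count argument (or a direct citation of \cite{PoonenStoll2014}*{Proposition~5.4}).
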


\begin{proof}
  We partition $C(\Q_2)$ into residue disks and annuli according
  to Proposition~\ref{P:DAbounds}. Write $C_D(\Q_2)$ for the union of disks
  and $C_A(\Q_2)$ for the union of annuli. By~\cite{PoonenStoll2014}*{Proposition~5.4}
  (with $p = 2$), we have $\#\rholog_{\uom}(C_D(\Q_2)) \le 5d + 6g - 6$ where
  $d$ is the number of disks. By Proposition~\ref{P:DAbounds},
  $d \le 12(g-1) - 6(t-1)$ and there are at most $2g-3+t$ annuli,
  for some $0 \le t \le g$. This leads to the bound
  \begin{align*}
    \#\rholog_{\uom}(C(\Q_2))
      &\le \#\rholog_{\uom}(C_D(\Q_2)) + \#\rholog_{\uom}(C_A(\Q_2)) \\
      &\le \max_{0 \le t \le g} \bigl\{66(g-1) - 30(t-1) + (2(g-1) + (t-1))(96(g-1) + 31)\bigr\} \\
      &= 288 (g-1)^2 + 129 (g-1)
  \end{align*}
  as claimed. The $\rholog$ map from~\cite{PoonenStoll2014} is
  $\rholog_{\uom}$ for a specific choice of~$\uom$.
\end{proof}

We remark that this bound can be improved somewhat with a bit more work
for hyperelliptic curves~$C$.
For example, one can use the approach of Section~\ref{S:hyp} to get a
partition of~$C(\Q_2)$ into disks and (not necessarily maximal) annuli such that on the
annuli the statement of Corollary~\ref{C:diffann} holds. This gives an improvement
of roughly a factor~$2$, so that the conclusion of Corollary~\ref{C:PSbetter} below
already holds for $g = 17$.
However, it appears that our method will not produce a bound better than
linear in~$g$ for the size of the image of an annulus under~$\rholog$,
and so the final bound for $\#\rholog(C(\Q_2))$ will stay quadratic in~$g$.

We finally obtain a uniformity result for the density of odd degree hyperelliptic
curves with only one rational point in any family defined by congruence conditions,
assuming the genus is sufficiently large.

\begin{corollary} \label{C:PSbetter}
  Let $g \ge 18$ and consider any subfamily~$\calF$ of odd degree hyperelliptic curves
  of genus~$g$ over~$\Q$ defined by finitely many congruence conditions and ordered
  by height as in~\cite{PoonenStoll2014}. Then the lower density of
  curves in~$\calF$ whose only rational point is the point at infinity
  is at least $1 - \bigl(576(g-1)^2 + 258(g-1) + 2) 2^{-g} > 0$.
\end{corollary}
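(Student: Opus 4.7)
The plan is to feed the uniform bound of Proposition~\ref{P:rhologim} into the density machinery of~\cite{PoonenStoll2014}. Recall the strategy of that paper: for an odd-degree hyperelliptic curve $C \colon y^2 = f(x)$ of genus~$g$ over~$\Q$, take the point at infinity as base point. A $2$-adic Chabauty argument produces an obstruction set $\rholog(C(\Q_2) \setminus \{\infty\}) \subset \PP^{g-1}(\F_2)$ with the property that, whenever the (fake) $2$-Selmer set of~$J$ maps, modulo the image of the obvious rational points, into the complement of this set, one concludes $C(\Q) = \{\infty\}$. Combining equidistribution of $2$-adic Selmer elements over the family (which works just as well over any congruence subfamily, since congruence conditions cut out a fixed positive proportion commuting with the sieve prime-by-prime) with the Bhargava--Gross bound on the average size of $\mathrm{Sel}_2$, they obtain that the upper density of $f \in \calF$ for which this containment fails is at most
\[
   2\bigl(\#\rholog(C(\Q_2)) + 1\bigr) \cdot 2^{-g} \,.
\]

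The obstruction in~\cite{PoonenStoll2014} to obtaining a result uniform in~$\calF$ was the absence of any \emph{a priori} bound on $\#\rholog(C(\Q_2))$ that does not depend on the individual $2$-adic geometry of~$C$. Proposition~\ref{P:rhologim} supplies exactly such a bound:
\[
  \#\rholog(C(\Q_2)) \le 144(g-1)^2 + 199(g-1) + 10
\]
for every smooth curve~$C$ of genus~$g$ over~$\Q_2$, and hence for every curve in~$\calF$ after base change. Inserting this into the density estimate above yields
\[
   2\bigl(144(g-1)^2 + 199(g-1) + 11\bigr) \cdot 2^{-g}
      = \bigl(288(g-1)^2 + 398(g-1) + 22\bigr) \cdot 2^{-g}
\]
as an upper bound on the density of curves in~$\calF$ having a rational point other than~$\infty$, giving the stated lower density for the `good' curves.

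For positivity, one needs $288(g-1)^2 + 398(g-1) + 22 < 2^g$. A direct numerical check shows that this fails at $g = 16$ (where the left side is $70792$ and the right side $65536$) and holds for every $g \ge 17$ (at $g = 17$, $80118 < 131072$), and the ratio then decays super-polynomially, so the inequality persists for all larger~$g$. The only genuinely non-routine ingredient is Proposition~\ref{P:rhologim} itself; the remainder consists in verifying that the sieve and equidistribution steps of~\cite{PoonenStoll2014} are insensitive to further congruence conditions, which is straightforward from their proofs.
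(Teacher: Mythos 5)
Your proposal is correct and follows essentially the same route as the paper: both plug the uniform bound $\#\rholog(C(\Q_2)) \le 144(g-1)^2 + 199(g-1) + 10$ from Proposition~\ref{P:rhologim} into the density machinery of~\cite{PoonenStoll2014} (the paper cites their Proposition~8.13 with $p=2$ directly), yielding the bad-density bound $2(\#I+1)2^{-g}$ and the stated constant. Your numerical check of the threshold $g \ge 17$ also matches.
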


\begin{proof}
  This follows from Proposition~8.13
  of~\cite{PoonenStoll2014} (with $p = 2$), since we know
  from Proposition~\ref{P:rhologim} that (in the notation of~\cite{PoonenStoll2014})
  $\#I \le 288 (g-1)^2 + 129 (g-1)$.
\end{proof}

The lower bound on the density tends to~$1$ quickly as $g \to \infty$,
so we can phrase this result as `most odd degree hyperelliptic curves in
any congruence family have only one rational point.'


\begin{bibdiv}
\begin{biblist}

\bib{ArtinWinters}{article}{
   author={Artin, M.},
   author={Winters, G.},
   title={Degenerate fibres and stable reduction of curves},
   journal={Topology},
   volume={10},
   date={1971},
   pages={373--383},
   issn={0040-9383},
   review={\MR{0476756 (57 \#16313)}},
}

\bib{Berkovich2007}{book}{
   author={Berkovich, Vladimir G.},
   title={Integration of one-forms on $p$-adic analytic spaces},
   series={Annals of Mathematics Studies},
   volume={162},
   publisher={Princeton University Press},
   place={Princeton, NJ},
   date={2007},
   pages={vi+156},
   isbn={978-0-691-12862-7},
   isbn={0-691-12862-6},
   review={\MR{2263704 (2008a:14035)}},
}

\bib{Bertrand2011preprint}{misc}{
   author={Bertrand, Daniel},
   title={Special points and Poincar\'e bi-extensions},
   date={2011},
   note={Preprint, \texttt{arXiv:1104.5178}, with an appendix by Bas Edixhoven},
}

\bib{Bertrand2013}{article}{
   author={Bertrand, D.},
   title={Unlikely intersections in Poincar\'e biextensions over elliptic
   schemes},
   journal={Notre Dame J. Form. Log.},
   volume={54},
   date={2013},
   number={3-4},
   pages={365--375},
   issn={0029-4527},
   review={\MR{3091662}},
   doi={10.1215/00294527-2143907},
}

\bib{BGR-book}{book}{
   author={Bosch, S.},
   author={G{\"u}ntzer, U.},
   author={Remmert, R.},
   title={Non-Archimedean analysis},
   series={Grundlehren der Mathematischen Wissenschaften [Fundamental
   Principles of Mathematical Sciences]},
   volume={261},
   note={A systematic approach to rigid analytic geometry},
   publisher={Springer-Verlag, Berlin},
   date={1984},
   pages={xii+436},
   isbn={3-540-12546-9},
   review={\MR{746961 (86b:32031)}},
   doi={10.1007/978-3-642-52229-1},
}

\bib{BoschLuetkebohmert-stabredI}{article}{
   author={Bosch, Siegfried},
   author={L{\"u}tkebohmert, Werner},
   title={Stable reduction and uniformization of abelian varieties. I},
   journal={Math. Ann.},
   volume={270},
   date={1985},
   number={3},
   pages={349--379},
   issn={0025-5831},
   review={\MR{774362 (86j:14040a)}},
   doi={10.1007/BF01473432},
}

\bib{BoschLuetkebohmert-stabredII}{article}{
   author={Bosch, Siegfried},
   author={L{\"u}tkebohmert, Werner},
   title={Stable reduction and uniformization of abelian varieties. II},
   journal={Invent. Math.},
   volume={78},
   date={1984},
   number={2},
   pages={257--297},
   issn={0020-9910},
   review={\MR{767194 (86j:14040b)}},
   doi={10.1007/BF01388596},
}

\bib{Buium1993}{article}{
   author={Buium, Alexandru},
   title={Effective bound for the geometric Lang conjecture},
   journal={Duke Math. J.},
   volume={71},
   date={1993},
   number={2},
   pages={475--499},
   issn={0012-7094},
   review={\MR{1233446 (95c:14055)}},
   doi={10.1215/S0012-7094-93-07120-7},
}

\bib{BuiumVoloch}{article}{
   author={Buium, Alexandru},
   author={Voloch, Jos{\'e} Felipe},
   title={Lang's conjecture in characteristic $p$: an explicit bound},
   journal={Compositio Math.},
   volume={103},
   date={1996},
   number={1},
   pages={1--6},
   issn={0010-437X},
   review={\MR{1404995 (98a:14038)}},
}

\bib{CaporasoHarrisMazur1997}{article}{
   author={Caporaso, Lucia},
   author={Harris, Joe},
   author={Mazur, Barry},
   title={Uniformity of rational points},
   journal={J. Amer. Math. Soc.},
   volume={10},
   date={1997},
   number={1},
   pages={1--35},
   issn={0894-0347},
   review={\MR{1325796 (97d:14033)}},
   doi={10.1090/S0894-0347-97-00195-1},
}

\bib{Chabauty1941}{article}{
  author={Chabauty, Claude},
  title={Sur les points rationnels des courbes alg\'ebriques de genre sup\'erieur \`a l'unit\'e},
  language={French},
  journal={C. R. Acad. Sci. Paris},
  volume={212},
  date={1941},
  pages={882\ndash 885},
  review={\MR {0004484 (3,14d)}},
}

\bib{Coleman1985chabauty}{article}{
  author={Coleman, Robert F.},
  title={Effective Chabauty},
  journal={Duke Math. J.},
  volume={52},
  date={1985},
  number={3},
  pages={765\ndash 770},
  issn={0012-7094},
  review={\MR {808103 (87f:11043)}},
}

\bib{ConceicaoUlmerVoloch}{article}{
   author={Concei{\c{c}}{\~a}o, Ricardo},
   author={Ulmer, Douglas},
   author={Voloch, Jos{\'e} Felipe},
   title={Unboundedness of the number of rational points on curves over
   function fields},
   journal={New York J. Math.},
   volume={18},
   date={2012},
   pages={291--293},
   issn={1076-9803},
   review={\MR{2928577}},
}

\bib{Faltings1983}{article}{
  author={Faltings, G.},
  title={Endlichkeitss\"atze f\"ur abelsche Variet\"aten \"uber Zahlk\"orpern},
  language={German},
  journal={Invent. Math.},
  volume={73},
  date={1983},
  number={3},
  pages={349--366},
  issn={0020-9910},
  review={\MR {718935 (85g:11026a)}},
  note={Erratum in: Invent.\ Math.\ {\bf 75} (1984), 381},
}

\bib{Faltings1994}{article}{
   author={Faltings, Gerd},
   title={The general case of S. Lang's conjecture},
   conference={
      title={Barsotti Symposium in Algebraic Geometry},
      address={Abano Terme},
      date={1991},
   },
   book={
      series={Perspect. Math.},
      volume={15},
      publisher={Academic Press},
      place={San Diego, CA},
   },
   date={1994},
   pages={175--182},
   review={\MR{1307396 (95m:11061)}},
}

\bib{Katz-Zureick-Brown}{article}{
   author={Katz, Eric},
   author={Zureick-Brown, David},
   title={The Chabauty-Coleman bound at a prime of bad reduction and
   Clifford bounds for geometric rank functions},
   journal={Compos. Math.},
   volume={149},
   date={2013},
   number={11},
   pages={1818--1838},
   issn={0010-437X},
   review={\MR{3133294}},
   doi={10.1112/S0010437X13007410},
}

\bib{KRZB}{misc}{
  author={Katz, Eric},
  author={Rabinoff, Joseph},
  author={Zureick-Brown, David},
  title={Uniform bounds for the number of rational points on curves of small Mordell-Weil rank},
  date={2015-04-25},
  note={Preprint, \texttt {arXiv:1504.00694v2 [math.NT]}},
}

\bib{Liu-book}{book}{
   author={Liu, Qing},
   title={Algebraic geometry and arithmetic curves},
   series={Oxford Graduate Texts in Mathematics},
   volume={6},
   note={Translated from the French by Reinie Ern\'e;
   Oxford Science Publications},
   publisher={Oxford University Press},
   place={Oxford},
   date={2002},
   pages={xvi+576},
   isbn={0-19-850284-2},
   review={\MR{1917232 (2003g:14001)}},
}

\bib{Mazur1986}{article}{
   author={Mazur, Barry},
   title={Arithmetic on curves},
   journal={Bull. Amer. Math. Soc. (N.S.)},
   volume={14},
   date={1986},
   number={2},
   pages={207--259},
   issn={0273-0979},
   review={\MR{828821 (88e:11050)}},
   doi={10.1090/S0273-0979-1986-15430-3},
}

\bib{Mazur2000}{article}{
   author={Mazur, Barry},
   title={Abelian varieties and the Mordell-Lang conjecture},
   conference={
      title={Model theory, algebra, and geometry},
   },
   book={
      series={Math. Sci. Res. Inst. Publ.},
      volume={39},
      publisher={Cambridge Univ. Press},
      place={Cambridge},
   },
   date={2000},
   pages={199--227},
   review={\MR{1773708 (2001e:11061)}},
}

\bib{McCallum-Poonen2013}{article}{
   author={McCallum, William},
   author={Poonen, Bjorn},
   title={The method of Chabauty and Coleman},
   language={English, with English and French summaries},
   conference={
      title={Explicit methods in number theory},
   },
   book={
      series={Panor. Synth\`eses},
      volume={36},
      publisher={Soc. Math. France, Paris},
   },
   date={2012},
   pages={99--117},
   review={\MR{3098132}},
}

\bib{Pacelli1997}{article}{
   author={Pacelli, Patricia L.},
   title={Uniform boundedness for rational points},
   journal={Duke Math. J.},
   volume={88},
   date={1997},
   number={1},
   pages={77--102},
   issn={0012-7094},
   review={\MR{1448017 (98b:14020)}},
   doi={10.1215/S0012-7094-97-08803-7},
}

\bib{Pink2005preprint}{misc}{
   author={Pink, Richard},
   title={A Common Generalization of the Conjectures of Andr\'e-Oort, Manin-Mumford, %
          and Mordell-Lang},
   date={2005},
   note={Preprint, \texttt{http://www.math.ethz.ch/$\sim$pink/ftp/AOMMML.pdf}},
}

\bib{PoonenStoll2014}{article}{
   author={Poonen, Bjorn},
   author={Stoll, Michael},
   title={Most odd degree hyperelliptic curves have only one rational point},
   journal={Ann. of Math. (2)},
   volume={180},
   date={2014},
   number={3},
   pages={1137--1166},
   issn={0003-486X},
   review={\MR{3245014}},
   doi={10.4007/annals.2014.180.3.7},
}

\bib{Stoll2006-chabauty}{article}{
  author={Stoll, Michael},
  title={Independence of rational points on twists of a given curve},
  journal={Compos. Math.},
  volume={142},
  date={2006},
  number={5},
  pages={1201--1214},
  issn={0010-437X},
  review={\MR {2264661}},
}

\bib{ZannierBook}{book}{
   author={Zannier, Umberto},
   title={Some problems of unlikely intersections in arithmetic and geometry},
   note={With appendixes by David Masser},
   series={Annals of Mathematics Studies},
   volume={181},
   publisher={Princeton University Press},
   place={Princeton, NJ},
   date={2012},
   pages={xiv+160},
   isbn={978-0-691-15371-1},
}

\end{biblist}
\end{bibdiv}

\end{document}